\newcommand{\cris}{\mathrm{cris}}
\newcommand{\Fil}{{\mathrm{Fil}}}
\def\st{\mathrm{st}}
\def\dR{\mathrm{dR}}
\def\rig{\mathrm{rig}}
\def\cyc{\mathrm{cyc}}
\def\gr{\mathrm{gr}}
\def\Sh{\mathrm{Sh}}
\def\GL{\mathrm{GL}}
\def\Max{\mathrm{Max}}
\def\BX{\mathbf{X}}
\def\BB{\mathbf{B}}
\def\BZ{\mathbf{Z}}
\def\BV{\mathbf{V}}
\def\BD{\mathbf{D}}
\def\BQ{\mathbf{Q}}
\def\BN{\mathbf{N}}
\def\BF{\mathbf{F}}
\def\BC{\mathbf{C}}
\def\CL{\mathcal{L}}
\def\CR{\mathscr{R}}
\def\CD{\mathcal{D}}
\def\CV{\mathcal{V}}
\def\CM{\mathcal{M}}
\def\CF{\mathcal{F}}
    \theoremstyle{plain}
    \newtheorem{thm}{Theorem}[section]
    \newtheorem{cor}[thm]{Corollary}
    \newtheorem{lem}[thm]{Lemma}
    \newtheorem{prop}[thm]{Proposition}
    \theoremstyle{definition}
    \newtheorem{defn}[thm]{Definition}
    \theoremstyle{remark}
    \newtheorem {rem}[thm]{Remark}
    \numberwithin{equation}{section}
\begin{document}

\title{Derivatives of Frobenius and Derivatives of Hodge--Tate weights}
\author{Bingyong Xie \\ \small Department of Mathematics, East China Normal University, Shanghai, China \\ \small byxie@math.ecnu.edu.cn}

\date{}
\maketitle

\begin{abstract} In this paper we study the derivatives of Frobenius
and the derivatives of Hodge--Tate weights for families of Galois
representations with triangulations. We generalize the
Fontaine--Mazur $\CL$-invariant and use it to build a formula which
is a generalization of the Colmez--Greenberg--Stevens formula. For
the purpose of proving this formula we show two auxiliary results
called projection vanishing property and ``projection vanishing
implying $\CL$-invariants'' property.
\end{abstract}

\section*{Introduction}

In the remarkable paper \cite{GS} Greenberg and Stevens proved a
formula conjectured by Mazur, Tate, and Teitelbaum \cite{MTT}, for
the derivative at $s=1$ of the $p$-adic $L$-function of an elliptic
curve $E$ over $\BQ$ when $p$ is a prime of split multiplicative
reduction. An important quantity in this formula is the so called
$\CL$-invariant, namely $\CL(E)=\log_p(q_E)/v_p(q_E)$ where $q_E\in
p \BZ_p$ is the Tate period for $E$. A key ingredient of their proof
is Hida's families of $p$-adic ordinary Hecke eigenforms. For the
weight $2$ newform $g$ attached to $E$, there exists one such family
containing $g$. Let $\alpha$ be the function of $U_p$-eigenvalues of
the forms in this family. On the one hand, by considering the
infinitesimal deformation of the Galois representation in this Hida
family, they proved
\begin{equation}
\CL(E)= -2 \frac{\alpha'(g)}{\alpha(g)}. \label{eq:L-elliptic}
\end{equation}
On the other hand, they used the family to construct a two variable
$p$-adic $L$-function. The two variable $p$-adic $L$-function was
used to prove the following formula
\begin{equation} \label{eq:der}
\frac{L'_p(g, 1)}{L(g,1)} = -2\frac{\alpha'(g)}{\alpha(g)} .
\end{equation} Combining (\ref{eq:L-elliptic}) and (\ref{eq:der})
proves the conjectural formula.

In this paper, we focus on the equality (\ref{eq:L-elliptic}), which
was generalized by Colmez \cite{Cz2010} to the non-ordinary setting.
We state Colmez's result below.

Let $S$ be an affiniod $E$-algebra,  $\CV$ a $2$-dimensional free
$S$-representation of
$G_{\BQ_p}=\mathrm{Gal}(\overline{\BQ}_p/\BQ_p)$. Let $\{v_1, v_2\}$
be a basis of $\CV$ over $S$. Let $\sigma\mapsto A_\sigma$ be the
matrix of $\sigma\in G_{\BQ_p}$ with respect to this basis. Then
there exist $\delta, \kappa\in S$ such that
$$\log(\det A_\sigma) = \delta \psi_1(\sigma) + \kappa
\psi_2(\sigma)$$ for every $\sigma\in G_{\BQ_p}$. Here, $\psi_1:
G_{\BQ_p}\rightarrow E$ is the unramified additive character of
$G_{\BQ_p}$ such that $\psi_1(\sigma)=1$ if $\sigma$ induces the
Frobenius $x \mapsto x^p$ on $\overline{\BF}_p$;
$\psi_2:G_{\BQ_p}\rightarrow E$ is the additive character that is
the logarithm of the cyclotomic character $\chi_\cyc$.

\begin{thm}\label{thm:colmez} $($\cite{Cz2010}$)$ Suppose that, at each closed point $z$ of $\mathrm{Max}(S)$ one of the Hodge--Tate weight of $\CV_z$ is $0$, and there exists $\alpha\in S$
such that $(\BB_{\cris,S}^{\varphi=\alpha}\widehat{\otimes}_S \CV
)^{G_{\BQ_p}}$ is locally free of rank $1$ over $S$. Suppose $z_0$
is a closed point of $\mathrm{Max}(S)$ such that $\CV_{z_0}$ is
semistable with Hodge--Tate weights \footnote{In this paper, the
Hodge--Tate weights are defined to be minus the generalized
eigenvalues of Sen's operators. In particular the Hodge--Tate weight
of the cyclotomic character $\chi_\cyc$ is $-1$.} $0$ and $k\geq 1$.
Then the differential
$$\frac{\mathrm{d}\alpha}{\alpha} - \frac{1}{2} \CL \mathrm{d}\kappa+
\frac{1}{2}\mathrm{d}\delta $$ is zero at $z_0$, where $\CL$ is the
Fontaine--Mazur $\CL$-invariant of $\CV_{z_0}$.
\end{thm}

The condition that
``$(\BB_{\cris,S}^{\varphi=\alpha}\widehat{\otimes}_S \CV
)^{G_{\BQ_p}}$ is locally free of rank $1$ over $S$'' in Theorem
\ref{thm:colmez} is equivalent to that $\CV$ admits a triangulation.
Roughly Theorem \ref{thm:colmez} says that the derivatives of
Frobenius and the derivatives of Hodge--Tate weights of a family of
$2$-dimensional representations of $G_{\BQ_p}$ with a triangulation
satisfy a non-trivial relation at each semistable (but
non-crystalline) point.

There have been several generalizations of Colmez's theorem:

(1) In \cite{Pott}  Pottharst generalized Colmez's theorem to
families of not necessarily \'etale $(\varphi,\Gamma)$-modules of
rank $2$ with a triangulation.


(2) Zhang \cite{Zhang} agian considered families of $2$-dimensional
Galois representations but replaced the base field $\BQ_p$ by any
finite extension of $\BQ_p$. In \cite{Ding, Ding-2} Ding applied
Zhang's work to the $p$-adic Langlands program for $\mathrm{GL}_2$.

(3) Ding \cite{Ding-2} generalized Zhang's work to a more general
setting by establishing a Colmez--Greenberg--Stevens formula for
partially de Rham families.

(4) For other generalizations see \cite{Ben2010, Ben}.

For some historical account of notions of $\mathcal{L}$-invariant
especially for modular forms of higher weights, and comparisons
among them, the readers are invited to consult Colmez's paper
\cite{Cz2005}.

In this paper we generalize Theorem \ref{thm:colmez} to the case of
$n$-dimensional representations of $G_{\BQ_p}$ ($n\geq 2$).
Precisely, what we concern is the relation at a special point among
derivatives of Frobenius and derivatives of Hodge--Tate weights of a
family of $n$-dimensional representations of $G_{\BQ_p}$ with a
triangulation. Our motivation is to generalize Ding's method
\cite{Ding, Ding-2} to higher dimensional Galois representations.

Let us explain what a triangulation is. Let $E$ be a finite
extension of $\BQ_p$. Given an $n$-dimensional $E$-representation
$V$ of $G_{\BQ_p}$, by the theory of Fontaine and the works of
Colmez--Chenbonnier and Berger, it is attached to a
$(\varphi,\Gamma)$-module $\mathrm{D}_{\mathrm{rig}}(V)$ of rank $n$
over the Robba ring $\CR_E$. Here, $\CR_E$ is a ring of power series
with coefficients in $E$. A key observation of Colmez is that
although the Galois representation $V$ may be irreducible,
$\mathrm{D}_{\mathrm{rig}}(V)$ may be reduced, and very often, it is
the successive extension of rank one $(\varphi,\Gamma)$-modules. In
the latter case, we call $V$ admits a triangulation. More precisely,
there is a filtration $\Fil_\bullet M$ on
$M=\mathrm{D}_{\mathrm{rig}}(V)$ consisting of saturated
$(\varphi,\Gamma)$-submodules of $M$ with
$\mathrm{rank}_{\CR_E}\Fil_iM =i$ such that $\Fil_iM/\Fil_{i-1}M$
($1\leq i\leq n$) is of rank $1$, i.e. of the form $\CR_E(\delta_i)$
where $\delta_i$ is an $E^\times$-valued character of
$\BQ_p^\times$. We call $(\delta_1, \dots, \delta_n)$ the
triangulation data for $V$ or $\mathrm{D}_{\mathrm{rig}}(V)$. When
$V$ is semistable, $-w_{\delta_1}, \dots, -w_{\delta_n}$ coincide
with the Hodge--Tate weights of $V$, and
$\delta_1(p)p^{w_{\delta_1}}, \dots, \delta_n(p)p^{w_{\delta_n}}$
coincide with eigenvalues of Frobenius of $V$. Here for a character
$\delta$ of $\BQ_p^\times$, $w_\delta$ is the weight of $\delta$
whose definition is given in Section \ref{sec:tri-ref}.

The significance of triangulations has been confirmed by many works.
It was applied to Fontaine--Mazur conjecture in Kisin's work
\cite{Kisin}, and to  $p$-adic Langlands correspondence in
 Colmez's work \cite{Col-ast}. Chenevier \cite{Chen} used it as a tool to show that
crystalline representations are Zariski-dense in many irreducible
components of $\mathfrak{X}_d$, where $\mathfrak{X}_d$ is the
$p$-adic analytic space classifying the semisimple continuous
representations $G_{\BQ_p}\rightarrow
\mathrm{GL}_d(\overline{\BQ}_p)$. Triangulation was used to
determine the spaces of locally analytic vectors of the unitary
principal series of $\GL_2(\BQ_p)$ in the papers \cite{Col2014} and
\cite{LXZ}.

Before stating our main theorem we introduce a generalization of the
Fontaine--Mazur $\CL$-invariant. Let $(D, \varphi, N, \Fil^\bullet)$
be the admissible filtered $E$-$(\varphi,N)$-module attached to $V$,
$\CF$ the refinement on $D$ corresponding to the triangulation on
$M$. Throughout this paper we assume that {\it $\varphi$ is
semisimple on $D$.} The monodromy $N$ induces an operator $N_\CF$ on
the grading module
$$\mathrm{gr}^\CF_\bullet D=\bigoplus_{i=1}^{\dim_ED}\CF_i
D/\CF_{i-1}D.$$ If $s, t\in \{1, \dots, \dim_E D\}$ satisfy $s<t$
and $N_\CF (\mathrm{gr}^\CF_t D)=\mathrm{gr}^\CF_s D$, then we say
that $s$ is marked for $\CF$ and write $t=t_\CF(s)$. Whether $s$ is
marked does not depend on $\varphi$ and $\Fil^\bullet$. We will
introduce another notion ``being strongly marked'' (see Definition
\ref{defn:Fontaine-Mazur}) which depends not only on $N$ and $\CF$
but also on $\varphi$ and $\Fil^\bullet$. If $s$ is strongly marked,
we can attach to $s$ an invariant denoted by $\CL_{\CF,s}$.

Now we can state our main theorem as follows.

\begin{thm}\label{thm:main} Let $S$ be an affinoid $E$-algebra. Let $\CV$ be an $S$-representation of
$G_{\BQ_p}$ with a triangulation and the associated triangulation
date $(\delta_1, \dots, \delta_n)$. Let $z_0$ be a closed point of
$\mathrm{Max}(S)$, $E_{z_0}$ the residue field of $S$ at $z_0$.
Suppose that $\CV_{z_0}$ is semistable and $\varphi$ is semisimple
on $D$, where $D$ is the filtered $E_{z_0}$-$(\varphi,N)$-module
attached to $\CV_{z_0}$. Let $\CF$ be the refinement on $D$
corresponding to the triangulation of $\CV_{z_0}$. Suppose that
$s\in \{1, \dots, n-1\}$ is strongly marked for $\CF$, $t=t_\CF(s)$.
Then
$$ \frac{\mathrm{d}\delta_t(p)}{\delta_t(p)} - \frac{\mathrm{d}\delta_s(p)}{\delta_s(p)} + \CL_{\CF,s} ( \mathrm{d} w_{\delta_t} -\mathrm{d} w_{\delta_s} ) $$
is zero at $z_0$.
\end{thm}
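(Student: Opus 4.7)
The plan is to move the problem into the language of $(\varphi,\Gamma)$-modules. The triangulation of $\CV$ transports to a triangulation $\Fil_\bullet M$ of the associated $(\varphi,\Gamma)$-module $M$ over $\CR_S$, whose graded pieces are $\CR_S(\delta_i)$. At $z_0$, the first-order deformations of each character $\delta_i$ obtained by varying $z_0$ along $S$ give cohomology classes in $H^1(G_{\BQ_p},E_{z_0})$: the unramified coordinate recovers $\mathrm{d}\delta_i(p)/\delta_i(p)$ and the cyclotomic coordinate recovers $\mathrm{d}w_{\delta_i}$. In this way the target identity becomes a linear relation among four explicit cohomology classes, attached to $\delta_s$ and $\delta_t$, with coefficient $\CL_{\CF,s}$ on the cyclotomic pair.

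Next I would isolate the rank-two subquotient of $M_{z_0}$ that pairs the critical indices $s$ and $t$. Start from $\Fil_t M_{z_0}/\Fil_{s-1}M_{z_0}$, whose refinement inherits $(\delta_s,\ldots,\delta_t)$ at the graded level, and then use strong criticality to cut down further to a rank-two subquotient realizing the monodromy arrow $N_\CF(\mathrm{gr}^\CF_t D)=\mathrm{gr}^\CF_s D$. By its very definition (Definition \ref{defn:Fontaine-Mazur}), $\CL_{\CF,s}$ is the Fontaine-Mazur-type invariant of this rank-two piece, generalizing the role of $\CL$ in Colmez's Theorem \ref{thm:colmez}; it reads off the position of the Hodge filtration against the monodromy line on that piece.

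Now apply the two auxiliary results announced in the introduction. The \emph{projection vanishing property} should assert that because $\CV$ admits a triangulation over all of $S$, the first-order deformation of $M$ at $z_0$ projects trivially onto the $\mathrm{Ext}^1$ group associated with the critical rank-two subquotient. The \emph{``projection vanishing implies $\CL$-invariants''} property then converts this vanishing, combined with the cohomological interpretation from the first paragraph, into exactly the displayed identity with coefficient $\CL_{\CF,s}$; when $n=2$ this recovers Colmez's theorem. The main obstacle is extracting the rank-two piece cleanly when $s$ and $t$ are not adjacent: the intermediate graded pieces $\CR_{E_{z_0}}(\delta_i)$ for $s<i<t$ can in principle contribute to the relevant extension class, and showing that their contribution either vanishes or is absorbed into $\CL_{\CF,s}$ requires a careful book-keeping of the Hodge filtration against the $\varphi$-eigenvalues on $\mathrm{gr}^\CF_\bullet D$. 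This is precisely what the \emph{strong} criticality condition (as opposed to plain criticality, which ignores $\varphi$ and $\Fil^\bullet$) is engineered to guarantee, and unpacking this interaction is where the bulk of the analysis will live.
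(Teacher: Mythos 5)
Your high-level plan correctly identifies the two ingredients from the introduction (projection vanishing, and ``projection vanishing implies $\CL$-invariants''), but the way you propose to use them has a structural gap. You want to cut out a \emph{rank-two} subquotient of $M_{z_0}$ ``realizing the monodromy arrow'' and interpret $\CL_{\CF,s}$ as a Fontaine--Mazur invariant of that piece. This does not match where the argument actually has to live. The $1$-cocycle $c$ arising from the first-order deformation takes values in $\CV_{z_0}^*\otimes_{E_{z_0}}\CV_{z_0}$, and the relation you need links the three projections $\pi_{s,s}(c)$, $\pi_{t,t}(c)$ and $\pi_{t,s}(c)$ through the monodromy identity
\[
N\bigl(\pi_{t,s}(c_\sigma)\bigr)=\pi_{s,s}(c_\sigma)-\pi_{t,t}(c_\sigma),
\]
coming from $N(e^*_s\otimes e_t)=e^*_s\otimes e_s-e^*_t\otimes e_t$ inside $D^*\otimes_E D$. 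A rank-two filtered $(\varphi,N)$-module cannot carry these three coordinate slots. In the paper the core cohomological computation (Lemma~\ref{lem:aux}) is performed on a \emph{rank-three} filtered $(\varphi,N)$-module $D=Ef_1\oplus Ef_2\oplus Ef_3$ with $N(f_2)=-f_1$, $N(f_3)=f_1$, and $\Fil^0D=E(f_2-\CL f_1)\oplus E(f_3+\CL f_1)$; the $\CL$-invariant is extracted via the explicit Tate-duality cup product $H^1(E)\times H^1(E(1))\to E$, not via an $\mathrm{Ext}^1$ of a rank-two subquotient of $M_{z_0}$. This rank-three module is produced inside $D^*\otimes_E D$ (the span of $e^*_t\otimes e_s,e^*_s\otimes e_s,e^*_t\otimes e_t$), which is yet another reason your proposed rank-two carving of $M_{z_0}$ is the wrong object.

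Your discussion of the intermediate indices $s<i<t$ is also only a placeholder. You assert that strong criticality ``is engineered to guarantee'' that their contribution vanishes or is absorbed, but that is precisely what is \emph{not} automatic and what the paper has to work for: one passes to the quotient $\mathscr{D}=D_1^*\otimes_E\overline{D}_2$, uses an $s$-perfect basis so that $N$ is adapted to the index pair $(s,t)$, and then modifies $c$ to a new cocycle $c'$ with values in a rank-three submodule $\mathscr{D}_0$ of $\mathscr{D}$ so that the de~Rham-vanishing hypothesis survives the passage. This construction (Theorem~\ref{prop:main-use}(\ref{it:main-use-c})) is the bulk of the argument, and nothing in your sketch indicates a mechanism for it. Finally, the projection vanishing property itself does not say that ``the first-order deformation of $M$ projects trivially onto the $\mathrm{Ext}^1$ group associated with the rank-two subquotient'': it is the statement that the projections $\pi_{h,\ell}([c])$ vanish in $H^1(\BB_{\st,E})$ whenever $h<\ell$, proved by comparing exterior-power invariants of the triangulated $(\varphi,\Gamma)$-module with the cocycle coordinates (Theorem~\ref{prop:middle-step}). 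You would need to formulate and prove that statement, not the $\mathrm{Ext}^1$ paraphrase, for the machinery to engage.
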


We remark that, when $s$ is marked for $\CF$ and $t_\CF(s)=s+1$, $s$
is strongly marked for $\CF$ if and only if $w_{\delta_{s}, z_0}
> w_{\delta_{s+1,z_0}}$.

An especially interesting case is when the rank of the monodromy $N$
of $D$ is equal to $\dim_E D-1$. Let $e_n$ be an element not in
$N(D)$ such that $\varphi (e_n)\in E e_n$. For $i=1,\dots, n-1$ put
$e_i=N^{n-i}e_n$. Then $D$ admits a unique triangulation $\CF$ and
$\CF_i D= Ee_1\oplus \cdots \oplus Ee_i$ for all $i=1,\dots, n$.
Write $k_i=-w_{\delta_{i,z_0}}$. Then $k_1, \dots, k_n$ are
Hodge--Tate weights of $\CV_{z_0}$. There always exists an
upper-triangular matrix $(\ell_{j,i})_{n\times n}$ such that $\{e_i
+ \sum\limits_{1\leq j<i} \ell_{j,i}e_j: i=1, \dots, n\}$ is an
$E$-basis of $D$ compatible with the Hodge filtration.

\begin{thm} \label{thm:main-b} With the above notations suppose that
$k_1< k_2<\cdots< k_n$. Then $$
\frac{\mathrm{d}\delta_{s+1}(p)}{\delta_{s+1}(p)} -
\frac{\mathrm{d}\delta_s(p)}{\delta_s(p)} + \ell_{s, s+1} (
\mathrm{d} w_{ \delta_{s+1} } -\mathrm{d} w_{\delta_{s}} ) $$ is
zero at $z_0$.
\end{thm}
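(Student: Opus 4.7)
The plan is to deduce Theorem \ref{thm:main-b} from Theorem \ref{thm:main} applied at the index $s$. Two points need to be checked: that $s$ is strongly critical for the refinement $\CF$ with $t_\CF(s)=s+1$, and that the generalized invariant $\CL_{\CF,s}$ equals the off-diagonal coefficient $\ell_{s,s+1}$.

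The first point is immediate. By construction $N(e_{i+1})=e_i$ for every $i\in\{1,\ldots,n-1\}$, so $N$ induces an isomorphism $N_\CF\colon\gr^\CF_{i+1}D\xrightarrow{\sim}\gr^\CF_iD$; hence $s$ is critical for $\CF$ with $t_\CF(s)=s+1$. The remark immediately after Theorem \ref{thm:main} says that in this case strong criticality is equivalent to $w_{\delta_s,z_0}>w_{\delta_{s+1},z_0}$, i.e.\ to $k_s<k_{s+1}$, which is guaranteed by the standing assumption $k_1<\cdots<k_n$. The requirement that $\varphi$ be semisimple on $D$ is automatic here: from $N\varphi=p\varphi N$ together with $\varphi(e_n)=\alpha e_n$ one gets $\varphi(e_i)=p^{i-n}\alpha\,e_i$, so $\varphi$ acts diagonally in the basis $\{e_1,\ldots,e_n\}$.

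The substantive step is the identification $\CL_{\CF,s}=\ell_{s,s+1}$. The natural candidate for extracting $\CL_{\CF,s}$ is the $2$-dimensional filtered $(\varphi,N)$-subquotient $\CF_{s+1}D/\CF_{s-1}D$: on the images $(\bar e_s,\bar e_{s+1})$ of the refinement basis, $\varphi$ acts diagonally, $N$ sends $\bar e_{s+1}\mapsto\bar e_s$, and the induced Hodge weights are $k_s<k_{s+1}$. Reducing the Hodge-compatible basis $f_i=e_i+\sum_{j<i}\ell_{j,i}e_j$ modulo $\CF_{s-1}D$ annihilates every summand with $j<s$, leaving $\bar f_s=\bar e_s$ and $\bar f_{s+1}=\bar e_{s+1}+\ell_{s,s+1}\bar e_s$. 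This is exactly the configuration whose classical Fontaine--Mazur $\CL$-invariant is $\ell_{s,s+1}$, so by the defining property of the generalized invariant one obtains $\CL_{\CF,s}=\ell_{s,s+1}$. Feeding $t=s+1$ and $\CL_{\CF,s}=\ell_{s,s+1}$ into Theorem \ref{thm:main} then yields Theorem \ref{thm:main-b}.

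The main obstacle is precisely this last identification: it demands a careful reading of Definition \ref{defn:Fontaine-Mazur} to confirm that $\CL_{\CF,s}$ really factors through the subquotient $\CF_{s+1}D/\CF_{s-1}D$ and is unaffected by the remaining coefficients $\ell_{j,i}$ with $(j,i)\neq(s,s+1)$. Once this localisation of the invariant to the rank-$2$ piece is justified, the rest of the deduction is formal.
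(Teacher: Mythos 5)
Your proposal is correct and is the deduction the paper intends (Theorem \ref{thm:main-b} is stated as a special case of Theorem \ref{thm:main}, with no separate proof given); both the verification of strong criticality via the remark after Theorem \ref{thm:main} and the computation $\CL_{\CF,s}=\ell_{s,s+1}$ by reducing the Hodge-compatible basis $\{f_i\}$ modulo $\CF_{s-1}D$ are exactly what is needed. The ``localisation'' you flag as the main obstacle is not a genuine one: Definition \ref{defn:Fontaine-Mazur} already defines $\CL_{\CF,s}$ via an $s$-decomposition of $\CF_tD/\CF_{s-1}D$, which for $t=s+1$ is simply $\CF_{s+1}D/\CF_{s-1}D$ with $L=0$ (so Cases~1 and $1'$ coincide and $k'_s=k_s$, $k'_t=k_{s+1}$), and the induced filtration you computed, with $\bar f_s=\bar e_s$ and $\bar f_{s+1}=\bar e_{s+1}+\ell_{s,s+1}\bar e_s$, reads off $\CL_{\mathrm{dec}}=\ell_{s,s+1}$ directly.
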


When $n=2$, the condition $k_1<k_2$ automatically holds. So Theorem
\ref{thm:main-b} covers Theorem \ref{thm:colmez}. Indeed, under the
condition of Theorem \ref{thm:colmez} we have
$\mathrm{d}w_{\delta_1}=0$,
$\frac{\mathrm{d}\alpha}{\alpha}=\frac{\mathrm{d}\delta_1(p)}{\delta_1(p)}$,
$\mathrm{d}\delta=
-\frac{\mathrm{d}\delta_1(p)}{\delta_1(p)}-\frac{\mathrm{d}\delta_2(p)}{\delta_2(p)}$
and $\mathrm{d}\kappa=\mathrm{d} w_{\delta_2}$.

There are two directions to generalize Theorem \ref{thm:main}. One
is to consider families of (not necessarily \'etale)
$(\varphi,\Gamma)$-modules instead of families of Galois
representations. The other is that the base field $\BQ_p$ is
replaced by a finite extension of $\BQ_p$. We plan to address these
generalizations in a future work.

There are two potential applications of Theorem \ref{thm:main}. One
is to the exceptional zero phenomenon, and the other is to the
$p$-adic Langlands program. In the case of $n=2$ the former is done
in \cite{GS} and the latter is done in \cite{Ding, Ding-2}. In
Section \ref{sec:apply} we provide some discussion on the latter
application.

We sketch the proof of Theorem \ref{thm:main}.

From $\CV$ we obtain an infinitesimal deformation of $\CV_{z_0}$ and
attach to this infinitesimal deformation a $1$-cocycle $c:
G_{\BQ_p}\rightarrow\CV_{z_0}^*\otimes_{E_{z_0}} \CV_{z_0}$. Let
$\{e_1, \dots, e_n\}$ be a basis of $D$ that is $s$-perfect for
$\CF$, $\{e^*_1, \dots e^*_n\}$ the dual basis of $\{e_1, \dots,
e_n\}$. (See Definition \ref{defn:s-perfect-basis} for the precise
meaning of $s$-perfect basis.) Let $\pi_{h,\ell}$ be the composition
of the inclusion
$$ \CV_{z_0}^*\otimes_{E_{z_0}} \CV_{z_0}\hookrightarrow
\BB_{\st,E_{z_0}}\otimes_{E_{z_0}} (\CV_{z_0}^*\otimes_{E_{z_0}}
\CV_{z_0}) $$ and the projection
$$ \BB_{\st,E_{z_0}}\otimes_{E_{z_0}} (\CV_{z_0}^*\otimes_{E_{z_0}}
\CV_{z_0})\rightarrow \BB_{\st,E_{z_0}}, \hskip 10pt \sum_{i,j}
b_{ij}e^*_j\otimes e_i \mapsto b_{\ell h}. $$

We have the following projection vanishing property (Theorem
\ref{thm:main-use-a}) and ``projection vanishing implying
$\CL$-invariant'' property (Theorem \ref{thm:main-use-b}).

\begin{thm} \label{thm:main-use-a} Suppose that $\varphi$ is semisimple on $D$. Let $c:G_{\BQ_p}\rightarrow \CV_{z_0}^*\otimes_{E_{z_0}}\CV_{z_0}$ be a
$1$-cocycle coming from an infinitesimal deformation of $\CV_{z_0}$.
If $h<\ell$, then $\pi_{h,\ell}([c])=0$ in $H^1(\BB_{\st,E_{z_0}})$.
\end{thm}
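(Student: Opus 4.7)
The plan is to combine the triangulation-preserving nature of the deformation with the comparison isomorphism available at the semistable point $z_0$ in order to produce a cocycle representative of $[c]$ that is ``upper triangular'' with respect to $\CF$ in the $s$-perfect basis, so that its $(\ell,h)$-entry with $h<\ell$ manifestly vanishes.

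First I would unpack the geometric input. The cocycle $c$ represents the infinitesimal deformation $\widetilde{\CV}$ obtained by pulling $\CV$ back along a square-zero thickening $E_{z_0}\to E_{z_0}[\epsilon]/(\epsilon^2)$ at $z_0$. Since $\CV$ carries a triangulation with data $(\delta_1,\ldots,\delta_n)$, the thickened representation $\widetilde{\CV}$ inherits a triangulation whose characters lift the $\delta_{i,z_0}$. Passing to $(\varphi,\Gamma)$-modules, this produces a filtration $\widetilde{\Fil}_\bullet$ of $D_{\rig}^\dagger(\widetilde{\CV})$ by saturated $(\varphi,\Gamma)$-submodules whose reduction modulo $\epsilon$ is the filtration on $D_{\rig}^\dagger(\CV_{z_0})$ associated to the refinement $\CF$.

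Second, I would invoke the comparison isomorphism. Because $\CV_{z_0}$ is semistable, the canonical map $\BB_{\st,E_{z_0}}\otimes_{E_{z_0}}\CV_{z_0}\cong \BB_{\st,E_{z_0}}\otimes_{E_{z_0}}D$ is Galois-equivariant and intertwines $\varphi$, $N$, and filtrations. Viewed through this identification, $c$ becomes a cocycle valued in $\BB_{\st,E_{z_0}}\otimes_{E_{z_0}}\End(D)$, and the triangulation-preservation recorded in step one guarantees that the associated $\BB_{\st,E_{z_0}}$-linear extension of $\BB_{\st,E_{z_0}}\otimes_{E_{z_0}}D$ admits a filtration lifting $\CF$.

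Third, using the $s$-perfect basis $\{e_1,\ldots,e_n\}$---which is adapted to $\CF$ with the further compatibility required at the index $s$---I would choose a $\BB_{\st,E_{z_0}}$-basis of the deformation that splits this lifted filtration and lifts the $e_i$. The difference between this adapted lift and any naive lift of $\{e_i\}$ furnishes precisely the coboundary adjustment, and the resulting cocycle representative is upper triangular in the sense that $b_{ij}=0$ whenever $i>j$. In particular the $(\ell,h)$-entry vanishes whenever $h<\ell$, giving $\pi_{h,\ell}([c])=0$ in $H^1(\BB_{\st,E_{z_0}})$.

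The main obstacle is step three: upgrading ``the filtration lifts'' to ``the cocycle may be chosen upper triangular modulo $\BB_{\st,E_{z_0}}$-coboundaries''. The semisimplicity of $\varphi$ on $D$ does essential work here, allowing an inductive construction of a filtration-respecting $\BB_{\st,E_{z_0}}$-basis of the deformation by decomposing $D$ into $\varphi$-eigenspaces and aligning with $\CF$ eigenspace by eigenspace; without semisimplicity one would have to fall back on richer $(\varphi,\Gamma)$-module techniques, which is consistent with the author's footnote that the general case is deferred to a sequel.
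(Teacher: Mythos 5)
Your high-level intuition—that the triangulation on the deformation should make the cocycle ``upper triangular'' modulo coboundaries—is morally correct and equivalent to the statement being proved. But the proposal has a genuine gap at step three, where the real work lives.

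The triangulation $\widetilde{\Fil}_\bullet$ is a filtration of the $(\varphi,\Gamma)$-module $\BD_\rig(\widetilde{\CV})$ by saturated $\CR_S$-submodules. These submodules are \emph{not} \'etale in general, so they do not correspond to Galois subrepresentations of $\widetilde{\CV}$, and it is not automatic that they induce a $G_{\BQ_p}$-stable $\BB_{\st,E_{z_0}}$-filtration of $\BB_{\st,E_{z_0}}\otimes \widetilde{\CV}$. Asserting that ``the associated $\BB_{\st,E_{z_0}}$-linear extension \dots admits a filtration lifting $\CF$'' is precisely the nontrivial claim you would need to establish, and your sketch does not produce it. The paper handles this by a wedge-power device (its Lemma~\ref{lem:middle-step}): for each $h$, the rank-one determinant $\wedge^h\widetilde{\Fil}_h$ is, after twisting by $({\delta'_1}^{-1}\cdots{\delta'_h}^{-1})$, an \'etale rank-one object, and Berger's comparison then yields a $G_{\BQ_p}$-invariant element $g_{1,\dots,h}\in [\BB^{\varphi=\cdots}_{\cris,E}\otimes_E(\wedge^h\CV)(\cdots)]^{G_{\BQ_p}}$ specializing to $e_{1,z}\wedge\cdots\wedge e_{h,z}$. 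Expanding $g_{1,\dots,h}=e_1\wedge\cdots\wedge e_h+Z\sum_J\lambda_J e_J$ and comparing coefficients of $e_1\wedge\cdots\wedge e_{h-1}\wedge e_\ell$ for $\ell>h$ gives $(X^{-1}U_\sigma X)_{\ell h}=(\sigma-1)(-\lambda_{1,\dots,h-1,\ell})$, which is exactly the projection-vanishing. This is not a single triangularizing change of basis on $\BB_{\st}\otimes\widetilde{\CV}$; different off-diagonal entries are killed by coboundaries coming from different wedge powers.

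Your role for $\varphi$-semisimplicity is also misplaced. In the paper it is used only to produce the perfect basis $\{e_{i,z}\}$ of $D$ diagonalizing $\varphi$ (Lemma~\ref{lem:perfectbasis}, and Lemma~\ref{lem:s-perfect-basis} for the $s$-perfect refinement); it plays no further role in the projection-vanishing argument itself, which is driven entirely by the $G_{\BQ_p}$-invariance of the $g_{1,\dots,h}$. The ``inductive eigenspace-by-eigenspace construction of a filtration-respecting $\BB_{\st,E_{z_0}}$-basis'' you gesture at is not how the argument runs, and would still face the basic obstacle that there is no a priori $\BB_{\st,E_{z_0}}$-filtration to split.
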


\begin{thm}\label{thm:main-use-b} Suppose that $\varphi$ is semisimple on $D$. Let $c$ be a $1$-cocycle $G_{\BQ_p}\rightarrow \CV_{z_0}^*\otimes_{E_{z_0}}
\CV_{z_0}$ satisfying the projection vanishing property. If $s$ is
strongly marked for $\CF$ and $t=t_\CF(s)$, then there exist
$\gamma_{s,1}, \gamma_{s,2}, \gamma_{t,1}, \gamma_{t,2}\in E_{z_0}$
and $x_s,x_t\in \BB_{\st,E_{z_0}}^{\varphi=1}$ such that
$$ \pi_{i,i}(c_\sigma) = \gamma_{i,1} \psi_1+ \gamma_{i,2}\psi_2+ (\sigma-1)x_i , \hskip 10pt i=s,t
.$$ Furthermore
$\gamma_{s,1}-\gamma_{t,1}=\CL_{\CF,s}(\gamma_{s,2}-\gamma_{t,2})$.
\end{thm}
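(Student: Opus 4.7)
The plan is to first establish the existence of the decomposition and then derive the $\CL$-invariant identity.

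\emph{Existence of the decomposition.} The inclusion $\CV_{z_0}^*\otimes_{E_{z_0}} \CV_{z_0}\hookrightarrow \BB_{\st,E_{z_0}}\otimes_{E_{z_0}}(\CV_{z_0}^*\otimes_{E_{z_0}}\CV_{z_0})$ sends $c_\sigma$ to $1\otimes c_\sigma$, so $\varphi(c_\sigma)=c_\sigma$ and $N(c_\sigma)=0$. Since $\varphi$ is semisimple, the $s$-perfect basis diagonalises $\varphi$: writing $\varphi(e_i)=\alpha_i e_i$, one has $\varphi(e_j^*\otimes e_i)=(\alpha_i/\alpha_j)(e_j^*\otimes e_i)$. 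Expanding $c_\sigma=\sum_{i,j} b_{ij}(\sigma)\,e_j^*\otimes e_i$ and comparing coefficients yields $\varphi(b_{ij})=(\alpha_j/\alpha_i)\,b_{ij}$; in particular $\pi_{i,i}(c_\sigma)=b_{ii}(\sigma)$ lies in $\BB_{\st,E_{z_0}}^{\varphi=1}=\BB_{\cris,E_{z_0}}^{\varphi=1}$. Because the $e_i$ and $e_i^*$ are Galois-fixed inside $\BB_\st\otimes\CV_{z_0}$ and $\BB_\st\otimes\CV_{z_0}^*$, the projection $\pi_{i,i}$ is Galois-equivariant, so $\pi_{i,i}(c)$ is a cocycle in $Z^1(G_{\BQ_p},\BB_{\cris,E_{z_0}}^{\varphi=1})$. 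The fundamental exact sequence $0\to\BQ_p\to\BB_\cris^{\varphi=1}\to\BB_\dR/\BB_\dR^+\to 0$ together with Tate's vanishing $H^1(G_{\BQ_p},\BC_p(n))=0$ for $n\neq 0$ identifies $H^1(G_{\BQ_p},\BB_{\cris,E_{z_0}}^{\varphi=1})$ with $E_{z_0}[\psi_1]\oplus E_{z_0}[\psi_2]$, which produces the required $\gamma_{i,1},\gamma_{i,2}\in E_{z_0}$ and $x_i\in\BB_{\st,E_{z_0}}^{\varphi=1}$.

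\emph{The $\CL$-invariant relation.} I would combine two cohomological inputs. First, apply $N=N_{\BB_\st}\otimes\mathrm{id}$ to $c_\sigma=\sum b_{ij}(\sigma)\,e_j^*\otimes e_i$: since $N(c_\sigma)=0$, the Leibniz rule together with the criticality $N(e_t)\equiv e_s\pmod{\CF_{s-1}}$ yields a linear identity of the form $N(b_{ts})=b_{ss}-b_{tt}+(\text{terms handled by projection vanishing})$. By Theorem \ref{thm:main-use-a}, the correction terms are coboundaries, so substituting the first-part expressions for $b_{ss}$ and $b_{tt}$ modulo coboundaries translates this into an identity in $H^1(G_{\BQ_p},\BB_{\cris,E_{z_0}}^{\varphi=1})$ expressing $(\gamma_{s,1}-\gamma_{t,1})[\psi_1]+(\gamma_{s,2}-\gamma_{t,2})[\psi_2]$ as the image under $N$ of the class of $b_{ts}$ in $H^1(G_{\BQ_p},\BB_{\st,E_{z_0}}^{\varphi=1/p})$. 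Second, pass to $\BB_{\dR,E_{z_0}}$: strong criticality (Definition \ref{defn:Fontaine-Mazur}) produces a vector of the form $e_t+\CL_{\CF,s}\,e_s$, modulo lower $\CF$-terms, lying in a distinguished Hodge graded piece of $D_\dR=D$. Projecting the $\BB_\dR$-valued cocycle onto the coordinate dual to this vector through the Hodge-adapted basis shows that the $\psi_2$-contribution is weighted by $\CL_{\CF,s}$ relative to the $\psi_1$-contribution. Equating the two identities yields $\gamma_{s,1}-\gamma_{t,1}=\CL_{\CF,s}(\gamma_{s,2}-\gamma_{t,2})$.

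\emph{Main obstacle.} The principal difficulty is the clean appearance of $\CL_{\CF,s}$, which requires choosing the $\BB_\dR$-projection so that the Hodge-filtration slope at the strongly critical index $s$ is isolated, without contamination from adjacent Hodge graded pieces or from other critical indices $s'\neq s$. Strong criticality, as opposed to mere criticality, is designed precisely so that these contributions vanish; verifying this demands detailed bookkeeping with the simultaneous actions of $\varphi$, $N$ and $\Fil^\bullet$ on the $s$-perfect basis, and is where the bulk of the technical work will lie.
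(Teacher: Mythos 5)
Your proposal has a genuine gap in the first part and is too sketchy in the second.

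\textbf{First part.} You assert that $\pi_{i,i}(c_\sigma)$ lies in $\BB_{\st,E_{z_0}}^{\varphi=1}=\BB_{\cris,E_{z_0}}^{\varphi=1}$, but this equality is false: writing $\BB_{\st,E}=\BB_{\cris,E}[u]$ with $\varphi(u)=pu$, the space $\BB_{\st,E}^{\varphi=1}$ contains $\sum_\ell a_\ell u^\ell$ with $a_\ell\in \BB_{\cris,E}^{\varphi=p^{-\ell}}$, so it properly contains $\BB_{\cris,E}^{\varphi=1}$. The condition $N(c_\sigma)=0$ does \emph{not} imply $N(\pi_{i,i}(c_\sigma))=0$: by Lemma \ref{lem:Bst-proj}, $N\circ\pi_{e^*}=\pi_{Ne^*}$, so $N(\pi_{s,s}(c_\sigma))=\pi_{N(e^*_s\otimes e_s)}(c_\sigma)$, which is in general nonzero. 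Correctly, $\pi_{s,s}(c_\sigma)$ lies in $U_{\mu_s,0}=\BB_{\st,E}^{N^{\mu_s+1}=0,\varphi=1}$. To conclude that the class of $\pi_{s,s}(c)$ comes from $H^1(E)$ one must show that $N[\pi_{s,s}(c)]=0$ in $H^1(\BB_{\st,E})$ and then invoke Proposition \ref{prop:isom}, which gives $H^1(E)\xrightarrow{\sim}\ker(H^1(U_{\mu_s,0})\xrightarrow{N} H^1(\BB_{\st,E}))$. This is exactly where the projection vanishing hypothesis enters: since $N(e^*_s\otimes e_s)$ is (via an $s$-perfect basis) an $E$-linear combination of $e^*_s\otimes e_i$ with $i\geq t$ and $e^*_j\otimes e_s$ with $j<s$, the map $N\circ\pi_{s,s}$ is the same combination of $\pi_{s,i}$ and $\pi_{j,s}$, all of which kill $[c]$ by hypothesis. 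Your proof never uses the projection vanishing in establishing the decomposition, which is symptomatic of the error.

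\textbf{Second part.} Your two ``cohomological inputs'' are in the right spirit, but the sketch does not engage the mechanism that actually produces the numerical relation. The decisive input is that $c$ takes values in $V^*\otimes_E V$, so by exactness of $0\to V^*\otimes V\to\BX_\st(D^*\otimes D)\to\BX_\dR(D^*\otimes D)\to 0$ the class of $c$ dies in $H^1(\BX_\dR)$; this is not the same as ``projecting the $\BB_\dR$-valued cocycle onto a distinguished coordinate.'' The paper's route is to manufacture, from the data $\pi_{s,s}(c),\pi_{t,t}(c),\pi_{t,s}(c)$ and a solution $y$ of $N(y)=x_s-x_t$ (provided by Lemma \ref{lem:exact-Bcris-BdR}), an auxiliary $1$-cocycle $c'$ valued in $\BX_\st(\mathscr{D}_0)$ for the explicit rank-$3$ module $\mathscr{D}_0$ built on $e^*_t\otimes e_s$, $e^*_s\otimes e_s$, $e^*_t\otimes e_t$, and to check that $[c']$ vanishes in $H^1(\BX_\dR(\mathscr{D}_0))$ by a careful basis change to the Hodge-compatible basis (where the combination $\pi_{t,s}(c')+\tilde a_{t,s}\pi_{t,t}(c')+\tilde b_{s,t}\pi_{s,s}(c')$ appears, with $\tilde a_{t,s}=-\CL_{\CF,s}$ and $\tilde b_{s,t}=\CL_{\CF,s}$). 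One then invokes Lemma \ref{lem:aux}, whose proof reduces to the explicit Tate-duality cup product $\langle a_1\psi_1+a_2\psi_2,(1,\CL)\rangle=-a_1+a_2\CL$ with the extension class $(1,\CL)\in H^1(E(1))$ of the $2$-dimensional filtered $(\varphi,N)$-module carrying the invariant $\CL$. Your sketch replaces this cup-product computation with a qualitative ``the $\psi_2$-contribution is weighted by $\CL_{\CF,s}$,'' which is precisely the step that must be made rigorous; as written it does not constitute a proof.

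In short: the first part contains a concrete error ($\BB_{\st}^{\varphi=1}\neq\BB_\cris^{\varphi=1}$) that hides the role of the projection vanishing hypothesis, and the second part identifies the right ingredients at a high level but omits the construction of the auxiliary rank-$3$ cocycle and the Tate-duality cup product that actually extract $\CL_{\CF,s}$.
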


Theorem \ref{thm:main} follows from Theorem \ref{thm:main-use-a},
Theorem \ref{thm:main-use-b} and a computation relating
$\gamma_{i,1}$, $\gamma_{i,2}$ to
$\frac{\mathrm{d}\delta_i(p)}{\delta_i(p)}$ and
$\mathrm{d}w_{\delta_i}$.

Our paper is organized as follows. In Section \ref{sec:gal-coh} we
provide preliminary results on Galois cohomology. The proof of the
``projection vanishing implying $\CL$-invariant'' property needs the
functors $\BX_\st$ and $\BX_\dR$ used in \cite{CzFon} where they are
denoted by $V^0_{st}$ and $V^1_{st}$ respectively. In Section
\ref{sec:gal-rep} we give a systematic study on these two functors.
The relation between triangulations and refinements is reviewed in
Section \ref{sec:tri-ref}. In Section \ref{sec:L-inv} we introduce
the concepts of being marked and strongly marked, and define
$\CL$-invariants. The ``projection vanishing implying
$\CL$-invariant'' property is proved in Section \ref{sec:main-teck},
and the projection vanishing property is proved in Section
\ref{sec:aux-res}. In section \ref{sec:proof-main} we combine
results in Section \ref{sec:main-teck} and Section \ref{sec:aux-res}
to prove Theorem \ref{thm:main}. Finally in Section \ref{sec:apply}
we discuss an application of Theorem \ref{thm:main} in $p$-adic
Langlands program.

\section*{Notation}

For a $G_{\BQ_p}$-module $M$ we write $H^i(M)$ for the cohomology
group $H^i(G_{\BQ_p}, M)$. For a $1$-cocycle $c:G_{\BQ_p}\rightarrow
M$ let $[c]$ denote the class of $c$ in $H^1(M)$. For a
$G_{\BQ_p}$-module $M$ let $M(i)$ denote the twist of $M$ by
$\chi_\cyc^i$, where $\chi_\cyc$ is the cyclotomic character.

Let $E$ be a finite extension of $\BQ_p$ considered as a base field
with trivial action of $G_{\BQ_p}$. Let $\psi_1:
G_{\BQ_p}\rightarrow E$ be the unramified additive character of
$G_{\BQ_p}$ such that $\psi_1(\sigma)=1$ if $\sigma$ induces the
Frobenius $x \mapsto x^p$ on $\overline{\BF}_p$. Let
$\psi_2:G_{\BQ_p}\rightarrow E$ be the additive character that is
the logarithm of $\chi_\cyc$. Then $[\psi_1]$ and $[\psi_2]$ form a
basis of $H^1(E)=\mathrm{Hom}(G_{\BQ_p},E)$ over $E$.

For an affinoid $E$-algebra $S$ and a closed point $z\in \Max(S)$,
let $E_z$ denote the residue field of $S$ at $z$. For an $S$-module
$\CM$ we put $\CM_z=\CM\otimes_S E_z$.

Let $\BN$, $\BZ$ and $\BQ$ denote the sets of non-negative integers,
integers and rational numbers respectively.


\section*{Acknowledgement} This paper is partly supported by
the National Natural Science Foundation of China (grant 11671137).

Part of this work was done while the author was a visitor at
Shanghai Center for Mathematical Sciences. The author is grateful to
this institution for its hospitality.

The author thanks Liang Xiao for his helpful discussion.

\section{Fontaine period rings and Galois cohomology}
\label{sec:gal-coh}

Let $\BB_\cris$, $\BB_\st$ and $\BB_\dR$ be Fontaine's period rings
\cite{Fontaine}. We recall their definitions. Let
$\widetilde{\mathrm{E}}^+$ be the ring $\{(x^{(0)}, x^{(1)}, \dots):
x^{(i)}\in \mathcal{O}_{\BC_p}, (x^{(i+1)})^p=x^{(i)} \ \forall \
i\in\BN\}$. Let $\varepsilon$ be an element of
$\widetilde{\mathrm{E}}^+$ such that $\varepsilon^{(0)}=1$ and
$\varepsilon^{(1)}\neq 1$; let $\tilde{p}$ be an element of
$\widetilde{\mathrm{E}}^+$ such that $\tilde{p}^{(0)}=p$. There is a
homomorphism
$$ \theta: \mathrm{W}(\widetilde{\mathrm{E}}^+)\rightarrow \mathcal{O}_{\BC_p}, \hskip 5pt \sum_{n\geq 0}p^n[x_n]\mapsto \sum_{n\geq 0} p^n x_n^{(0)},
$$ where $\mathrm{W}(\widetilde{\mathrm{E}}^+)$ is the ring of Witt
vectors with coefficients in $\widetilde{\mathrm{E}}^+$. Let
$\BB_\dR^+$ be the $\ker(\theta)$-adic completion of
$\mathrm{W}(\widetilde{\mathrm{E}}^+)[1/p]$. Then $
\log[\varepsilon]$ converges in $\BB_\dR^+$, which is denoted by $
t_\cyc $. Put $\BB_\dR=\BB_\dR^+[1/t_\cyc]$. There is a filtration
$\Fil^\bullet$ on $\BB_\dR$ such that $\Fil^i \BB_\dR=\BB_\dR^+
t_\cyc^i$. Let $\mathbf{A}_\cris$ be the $p$-adic completion of the
divided power envelope of $\mathrm{W}(\widetilde{\mathrm{E}}^+)$
with respect to $\ker(\theta)$. Put
$\BB_\cris=\mathbf{A}_\cris[1/t_\cyc]$ and
$\BB_\st=\BB_\cris[\log([\tilde{p}]/p)]$, which are two subrings of
$\BB_\dR$. The Frobenius on $\widetilde{\mathrm{E}}^+$ induces
operators on $\BB_\cris$ and $\BB_\st$ denoted by $\varphi$. Let $N$
be the $\BB_\cris$-derivation on $\BB_\st$, called the {\it
monodromy}, such that $N(\log([\tilde{p}]/p))=-1$. The action of
$G_{\BQ_p}$ on $\mathcal{O}_{\BC_p}$ induces actions on
$\widetilde{\mathrm{E}}^+$, $\BB_\cris$, $\BB_\st$ and $\BB_\dR$;
the action of $G_{\BQ_p}$ on $\BB_\st$ commutes with $\varphi$ and
$N$.

Put
$$\BB_{\cris,E}=\BB_\cris\otimes_{\BQ_p}E, \ \ \BB_{\st,E}=\BB_\st\otimes_{\BQ_p}E,
\ \ \BB_{\dR,E}=\BB_{\dR}\otimes_{\BQ_p}E.$$ We extend the actions
of $G_{\BQ_p}$ on $\BB_{\cris}$, $\BB_\st$ and $\BB_\dR$
$E$-linearly to $\BB_{\cris,E}$, $\BB_{\st,E}$ and $\BB_{\dR,E}$. We
also extend the operators $\varphi$ and $N$ on $\BB_\st$
$E$-linearly to $\BB_{\st,E}$. Then $\BB_{\cris,E}$ is stable under
$\varphi$ and $\BB_{\cris, E}=\BB_{\st,E}^{N=0}$. We have
$\varphi(t_\cyc)=p \, t_\cyc$, $Nt_\cyc=0$ and
$g(t_\cyc)=\chi_\cyc(g)t_\cyc$ for $g\in G_{\BQ_p}$. Let
$\Fil^\bullet$ be the filtration on $\BB_{\dR,E}$ such that $
\Fil^i\BB_{\dR,E} = \Fil^i \BB_\dR \otimes_{\BQ_p} E $. Put
$\BB^+_{\dR,E}=\BB^+_\dR\otimes_{\BQ_p}E=\Fil^0 \BB_{\dR,E}$. Then
we have the following short exact sequence, the so called {\it
fundamental exact sequence} \cite[Proposition 1.3 v)]{CzFon}
$$\xymatrix{ 0 \ar[r] & E \ar[r] & \BB^{\varphi=1}_{\cris,E} \ar[r] & \BB_{\dR,E}/ \BB_{\dR,E}^+ \ar[r] & 0.
}$$

The following lemma is well known. See \cite[Proposition
1.1]{Cz2010}. 

\begin{lem}\label{lem:coh-wellknown}
Let $a\leq b$ be in $\BZ\cup \{-\infty, +\infty\}$. If either $a>0$ or $b\leq
0$, then
$$ H^0(\Fil^a \BB_{\dR,E} /\Fil^b \BB_{\dR,E})= H^1(\Fil^a \BB_{\dR,E}/\Fil^b \BB_{\dR,E})=0
$$ with the convention $\Fil^{-\infty}\BB_{\dR,E}=\BB_{\dR,E}$ and
$\Fil^{+\infty}\BB_{\dR,E}=0$.
\end{lem}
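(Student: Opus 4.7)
The strategy is to reduce the claim to Tate's theorem on the vanishing of Galois cohomology of $\BC_p(i)$ for $i\neq 0$, via a dévissage through the graded pieces of the de Rham filtration.

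First I would identify the graded pieces. Multiplication by $t_\cyc^i$ induces a $G_{\BQ_p}$-equivariant isomorphism
$$\gr^i\BB_{\dR,E}:=\Fil^i\BB_{\dR,E}/\Fil^{i+1}\BB_{\dR,E}\;\cong\;\BC_p(i)\otimes_{\BQ_p}E.$$
Since $\otimes_{\BQ_p}E$ is exact and commutes with continuous cohomology, Tate's theorem gives $H^j(G_{\BQ_p},\gr^i\BB_{\dR,E})=0$ for all $i\neq 0$ and $j\in\{0,1\}$. Under either hypothesis of the lemma ($a>0$ or $b\leq 0$), every integer $i$ with $a\leq i<b$ satisfies $i\neq 0$, so all graded pieces appearing in $\Fil^a/\Fil^b$ have vanishing $H^0$ and $H^1$.

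Next I would treat the case of finite $a<b$ by induction on $b-a$. The base case $b=a+1$ is precisely the isomorphism above. For the inductive step, the short exact sequence
$$0\longrightarrow\gr^a\BB_{\dR,E}\longrightarrow \Fil^a\BB_{\dR,E}/\Fil^b\BB_{\dR,E}\longrightarrow \Fil^{a+1}\BB_{\dR,E}/\Fil^b\BB_{\dR,E}\longrightarrow 0$$
yields a long exact sequence of continuous cohomology; the outer terms have trivial $H^0$ and $H^1$ by Tate and by the inductive hypothesis, so the middle does too.

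Finally I would extend to the infinite endpoints by a limit argument. For $a>0$ and $b=+\infty$, I write $\Fil^a\BB_{\dR,E}=\varprojlim_n \Fil^a\BB_{\dR,E}/\Fil^n\BB_{\dR,E}$ and apply the Milnor-type exact sequence
$$0\to {\varprojlim}^1 H^0(\Fil^a\BB_{\dR,E}/\Fil^n\BB_{\dR,E})\to H^1(\Fil^a\BB_{\dR,E})\to \varprojlim H^1(\Fil^a\BB_{\dR,E}/\Fil^n\BB_{\dR,E})\to 0,$$
both outer terms of which vanish by the finite case; taking $H^0$ of an inverse limit of zero modules is also zero. For $a=-\infty$ and $b\leq 0$, I write $\BB_{\dR,E}/\Fil^b\BB_{\dR,E}=\varinjlim_n \Fil^{-n}\BB_{\dR,E}/\Fil^b\BB_{\dR,E}$ and use that continuous cohomology commutes with filtered colimits of discrete $G_{\BQ_p}$-modules.

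The main technical point to verify is the passage to the limit: one must equip each $\Fil^a\BB_{\dR,E}/\Fil^b\BB_{\dR,E}$ with its natural topology so that the short exact sequences are strict and the transition maps in the inverse system are continuous, after which the Milnor sequence and the colimit-cohomology interchange apply in the standard way. Everything else is formal dévissage on top of Tate's theorem.
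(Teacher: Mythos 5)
The paper offers no proof of this lemma at all: it is stated as ``well known'' with a bare citation to \cite[Proposition 1.1]{Cz2010}. So your proposal cannot be compared against an internal argument; instead I assess it on its own terms.

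Your strategy --- d\'evissage through the graded pieces $\gr^i\BB_{\dR,E}\cong \BC_p(i)\otimes_{\BQ_p}E$, applying Tate's vanishing $H^0(G_{\BQ_p},\BC_p(i))=H^1(G_{\BQ_p},\BC_p(i))=0$ for $i\neq 0$, and observing that the hypotheses $a>0$ or $b\le 0$ exclude the index $i=0$ from the range --- is the standard argument and is correct in its finite-length core. The induction on $b-a$ via the short exact sequence $0\to\gr^a\to\Fil^a/\Fil^b\to\Fil^{a+1}/\Fil^b\to 0$ and the long exact cohomology sequence is clean. The treatment of $a>0$, $b=+\infty$ by writing $\Fil^a\BB_{\dR,E}=\varprojlim_n \Fil^a/\Fil^n$ and invoking the Milnor sequence for continuous cohomology is also the right idea, provided one checks that the transition maps are surjective (they are) and that the topologies make the tower well behaved, as you flag.

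The one genuine misstatement is in the case $a=-\infty$, $b\le 0$. You write that continuous cohomology ``commutes with filtered colimits of discrete $G_{\BQ_p}$-modules,'' but the modules $\Fil^{-n}\BB_{\dR,E}/\Fil^b\BB_{\dR,E}$ are not discrete: each is a finite-dimensional $\BC_p$-vector space carrying its natural $p$-adic topology, and it is precisely with respect to that topology that Tate's theorem is invoked. (Treated as discrete modules the vanishing would fail badly for $H^1$.) The fix is elementary but should be stated correctly: $\BB_{\dR,E}/\Fil^b\BB_{\dR,E}=\bigcup_n \Fil^{-n}\BB_{\dR,E}/\Fil^b\BB_{\dR,E}$ is a strict direct limit along closed embeddings of Fr\'echet spaces, and since $G_{\BQ_p}$ is compact any continuous cocycle $G_{\BQ_p}\to\BB_{\dR,E}/\Fil^b\BB_{\dR,E}$ has compact, hence bounded, image and therefore factors through some finite stage $\Fil^{-n}/\Fil^b$; the vanishing then follows from the finite case. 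With that correction the argument is complete.
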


For $i\in \BN$ and $j\in \BZ$ put $U_{i,j}=\BB_{\st,E}^{N^{i+1}=0,
\varphi=p^j }$. Note that $U_{i,i-1}$ coincides with the notation
$U_i$ in \cite{Cz2010}.

\begin{lem} \label{lem:exact-Bcris-BdR} For every $i\geq 1$ we have the following short exact sequence
\[ \xymatrix{ 0 \ar[r] & \BB_{\cris,E}^{\varphi=p^j} \ar[r] & U_{i, j} \ar[r]^{N\ \ } & U_{i-1,j-1}\ar[r] & 0.  }\]
\end{lem}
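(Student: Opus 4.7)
The plan is to exploit the presentation $\BB_{\st,E} = \BB_{\cris,E}[u]$, where $u$ is a transcendental element with $\varphi(u) = pu$ and $Nu = 1$, so that every $x \in \BB_{\st,E}$ has a unique expansion $x = \sum_{k \geq 0} a_k u^k$ with $a_k \in \BB_{\cris,E}$ and almost all $a_k$ zero. The condition $N^{i+1} x = 0$ is equivalent to $a_k = 0$ for all $k > i$, while $\varphi(x) = p^j x$ translates, after collecting coefficients of $u^k$ on both sides of $\varphi(x) = \sum_k p^k \varphi(a_k) u^k$, to $\varphi(a_k) = p^{j-k} a_k$ for each $k$. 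This yields a canonical $E$-linear identification
\[ U_{i,j} \;=\; \bigoplus_{k=0}^{i} \BB_{\cris,E}^{\varphi = p^{j-k}} \cdot u^k . \]

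Under this identification, $N$ becomes the formal derivative in $u$: since each $a_k$ is killed by $N$, the Leibniz rule gives $N(a_k u^k) = k\, a_k u^{k-1}$, whence $Nx = \sum_{k \geq 1} k a_k u^{k-1}$. The kernel of $N$ on $U_{i,j}$ is therefore the degree-zero part $\BB_{\cris,E}^{\varphi = p^j}$, which gives exactness on the left. The coefficient of $u^{k-1}$ in $Nx$ is $k a_k \in \BB_{\cris,E}^{\varphi = p^{(j-1)-(k-1)}}$, matching exactly the Frobenius-eigenvalue constraint for membership in $U_{i-1, j-1}$; that $N$ lands in $U_{i-1,j-1}$ can also be seen intrinsically from the commutation $N\varphi = p\varphi N$ together with $N^{i+1} = N \circ N^i$.

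For surjectivity, given $y = \sum_{k=0}^{i-1} b_k u^k \in U_{i-1, j-1}$ with $b_k \in \BB_{\cris,E}^{\varphi = p^{j-1-k}}$, I set $a_0 = 0$ and $a_{k+1} = b_k / (k+1)$ for $k = 0, \ldots, i-1$. Then $a_{k+1}$ automatically has Frobenius eigenvalue $p^{j-(k+1)}$, so $x := \sum_{k=1}^{i} a_k u^k$ lies in $U_{i,j}$, and term-by-term differentiation gives $Nx = \sum_{k=0}^{i-1} (k+1) a_{k+1} u^k = y$.

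The only input one genuinely needs is the polynomial presentation $\BB_{\st,E} = \BB_{\cris,E}[u]$ together with its $\varphi$- and $N$-actions, which is part of the original definition of $\BB_{\st}$; once this is granted, the argument is an elementary antidifferentiation matching Frobenius eigenvalues coefficient by coefficient, and I do not anticipate any serious obstacle. The only minor subtlety is bookkeeping the shift $p^{j-k} \mapsto p^{(j-1)-(k-1)}$ correctly, which is precisely what makes the $j \mapsto j-1$ shift on the right-hand side of the exact sequence natural.
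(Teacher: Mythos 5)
Your proof is correct and takes essentially the same approach as the paper's: expand in the polynomial ring $\BB_{\cris,E}[u]$, observe that the $\varphi$-eigenvalue constraint on coefficients shifts by one power of $p$ under $N$, and antidifferentiate. One minor discrepancy: the paper (following Colmez--Fontaine, where $u=\log[\pi]$) has $N(u)=-1$, not $N(u)=1$, so all your signs flip and the paper's preimage is $y=-\sum_\ell a_\ell\,u^{\ell+1}/(\ell+1)$; this is purely cosmetic and does not affect the argument. You do go further than the paper in explicitly verifying exactness at the left and middle terms (the paper takes those as immediate and only checks surjectivity of $N$), and you correctly write the eigenvalue constraint as $\varphi=p^{j-1-k}$ where the paper's text has a typo ($p^{i-1-\ell}$).
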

\begin{proof} We only need to prove the surjectivity of $N: U_{i,j}\rightarrow U_{i-1, j-1}$.
Let $u$ be the element in $\BB_{\st}$, considered as an element in
$\BB_{\st,E}$, that is denoted by $\log[\pi]$ in \cite[\S
1.5]{CzFon}. Then $\BB_{\st,E}=\BB_{\cris,E}[u]$ and $\varphi
(u)=pu$, $N(u)=-1$. For $x\in U_{i-1,j-1}$ write
$x=\sum_{\ell=0}^{i-1} a_\ell u^\ell$ with $a_\ell\in \BB_{\cris,
E}$. Then $a_\ell$ is in $\BB_{\cris,E}^{\varphi=p^{i-1-\ell}}$. So
$y=-\sum_{\ell=0}^{i-1} a_\ell \frac{u^{\ell+1}}{\ell+1} $ is in
$U_{i,j}$ and $N(y)=x$.
\end{proof}

\begin{prop}\label{prop:isom}
If $i\geq 1$, then the inclusion $E\subset U_{i,0}$ induces an
isomorphism
$$ H^1(E) \xrightarrow{\sim} \ker(H^1(U_{i,0})\xrightarrow{N} H^1(\BB_{\st,E}) ). $$
\end{prop}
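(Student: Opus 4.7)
The strategy is to identify $\ker(H^1(U_{i,0})\xrightarrow{N} H^1(\BB_{\st,E}))$ with $H^1(\BB^{\varphi=1}_{\cris,E})$, and then with $H^1(E)$, by juxtaposing the long exact sequence from Lemma \ref{lem:exact-Bcris-BdR} (with $j=0$) with that of the fundamental exact sequence.

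First, I apply Lemma \ref{lem:coh-wellknown} to the fundamental exact sequence $0\to E\to\BB^{\varphi=1}_{\cris,E}\to\BB_{\dR,E}/\BB^+_{\dR,E}\to 0$: with $a=-\infty$, $b=0$, both $H^0$ and $H^1$ of $\BB_{\dR,E}/\BB^+_{\dR,E}$ vanish, yielding an isomorphism $H^1(E)\xrightarrow{\sim} H^1(\BB^{\varphi=1}_{\cris,E})$.

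Next, I invoke the long exact sequence of the short exact sequence from Lemma \ref{lem:exact-Bcris-BdR} with $j=0$. The vanishing $H^0(U_{i-1,-1})=0$ is immediate since $\BB_{\st,E}^{G_{\BQ_p}}=E$, on which $\varphi$ acts trivially, contradicting $\varphi=p^{-1}$. Consequently $H^1(\BB^{\varphi=1}_{\cris,E})\hookrightarrow H^1(U_{i,0})$ with image equal to $\ker(H^1(U_{i,0})\xrightarrow{N} H^1(U_{i-1,-1}))$.

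\textbf{Main obstacle.} The map $N:H^1(U_{i,0})\to H^1(\BB_{\st,E})$ in the statement factors through $H^1(U_{i-1,-1})$ via the inclusion $U_{i-1,-1}\hookrightarrow\BB_{\st,E}$. So the remaining task is to show this inclusion induces an \emph{injection} on $H^1$. From the long exact sequence of $0\to U_{i-1,-1}\to\BB_{\st,E}\to\BB_{\st,E}/U_{i-1,-1}\to 0$, it suffices to prove $H^0(\BB_{\st,E})\twoheadrightarrow H^0(\BB_{\st,E}/U_{i-1,-1})$, i.e., any $x\in\BB_{\st,E}$ with $(\sigma-1)x\in U_{i-1,-1}$ for all $\sigma$ lies in $E+U_{i-1,-1}$.

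To verify this, I would note that the conditions $N^i((\sigma-1)x)=0$ and $(\varphi-p^{-1})((\sigma-1)x)=0$ force $N^i x,\ \varphi x - p^{-1}x \in E$. Applying the commutation $\varphi N^i=p^{-i}N^i\varphi$ and exploiting the trivial $\varphi$-action on $E$ yields $\varphi(N^i x)=p^{-i-1}N^i x$, hence $N^i x=0$; so $x$ lies in the $\BB_{\cris,E}$-submodule $\bigoplus_{\ell=0}^{i-1}\BB_{\cris,E}u^\ell$. Since $\varphi$ preserves this decomposition, the condition $\varphi x - p^{-1}x\in E$ separates into simple $\varphi$-eigenvalue equations on each $u^\ell$-coefficient, solvable by a single constant $\tilde a_0\in E$ on the $u^0$-part, so that $x-\tilde a_0\in U_{i-1,-1}$.

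Chaining everything yields the isomorphism $H^1(E)\xrightarrow{\sim}\ker(H^1(U_{i,0})\xrightarrow{N}H^1(\BB_{\st,E}))$. The formal piece is straightforward manipulation of long exact sequences; the genuine work lies in the surjectivity of $H^0(\BB_{\st,E})\to H^0(\BB_{\st,E}/U_{i-1,-1})$, where the combined $\varphi$, $N$, and Galois structure must be carefully unwound.
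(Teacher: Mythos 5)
Your argument is correct, and it takes a genuinely different route from the paper's. The paper proceeds by induction on $i$: the base case $i=1$ is cited from Colmez, and the inductive step uses the exact sequence $0 \to U_{i-1,0} \to U_{i,0} \xrightarrow{N^i} U_{0,-i} \to 0$ together with the fact that $H^1(E(-i))\hookrightarrow H^1(\BB_{\st,E})$, which the paper justifies by a forward reference to Proposition~\ref{prop:ht>0-a} (``every nontrivial extension of $E$ by $E(-i)$ is non-semistable''). You instead avoid both the induction and the citation: you use the single exact sequence $0\to\BB^{\varphi=1}_{\cris,E}\to U_{i,0}\xrightarrow{N} U_{i-1,-1}\to 0$ from Lemma~\ref{lem:exact-Bcris-BdR} to reduce everything to the injectivity of $H^1(U_{i-1,-1})\to H^1(\BB_{\st,E})$, and you prove that injectivity directly by computing $H^0(\BB_{\st,E}/U_{i-1,-1})$ by hand using the $\varphi N = p^{-1}N\varphi$ commutation and the polynomial structure $\BB_{\st,E}=\BB_{\cris,E}[u]$. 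The bookkeeping all checks out: $N^i x\in E$ forces $N^i x = 0$ via $\varphi(N^ix)=p^{-i-1}N^ix$; the $u^\ell$-components for $\ell\geq 1$ automatically land in $\BB_{\st,E}^{\varphi=p^{-1}}$; and the $u^0$-component is fixed by subtracting $c/(1-p^{-1})\in E$. Your proof is therefore more self-contained and more elementary, at the cost of a somewhat longer explicit calculation; the paper's is shorter on the page but leans on an external reference and a fact proved later in the paper. A minor stylistic point: your proof re-derives the $i=1$ case of \cite[Proposition~1.2]{Cz2010} rather than citing it, which is a small bonus.
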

\begin{proof} We prove the assertion by induction on $i$. For $i=1$,
the assertion is \cite[Proposition 1.2]{Cz2010}.

By definition $U_{0,-i}=\BB_{\cris,E}^{\varphi=p^{-i}}$. From the
fundamental exact sequence we obtain the following exact sequence
\[\xymatrix{ 0 \ar[r] & E t^{-i}\ar[r] & U_{0,-i} \ar[r] & \BB_{\dR,E}/ \Fil^{-i} \BB_{\dR, E} \ar[r] &
0. }\] So we have an isomorphism $H^1(E (-i))=
H^1(Et^{-i})\rightarrow H^1(U_{0,-i})$ since by Lemme
\ref{lem:coh-wellknown}
$$H^0(\BB_{\dR,E}/ \Fil^{-i} \BB_{\dR, E})=H^1(\BB_{\dR,E}/ \Fil^{-i}
\BB_{\dR, E})=0$$ for $i\geq 0$. When $i\geq 1$, each nontrivial
extension of $E$ by $E(-i)$ is not semistable. (This is a well known
fact; it also follows from Proposition \ref{prop:ht>0-a} below.)
Thus $H^1(E(-i))\rightarrow H^1(\BB_{\st,E})$ induced by the natural
inclusion $E(-i)\subset \BB_{\st,E}$ is injective. As $H^1(E
(-i))\rightarrow H^1(U_{0,-i})$ is an isomorphism, it follows that
$H^1(U_{0,-i})\rightarrow H^1(\BB_{\st,E})$ is also injective.

Note that $$\ker(H^1(U_{i,0})\xrightarrow{N}
H^1(\BB_{\st,E}))\subset \ker(H^1(U_{i,0})\xrightarrow{N^i}
H^1(\BB_{\st,E})).$$ We consider the exact sequence
\[ \xymatrix{  0 \ar[r] & U_{i-1,0} \ar[r] & U_{i,0} \ar[r]^{N^i} & U_{0, -i} \ar[r] & 0.
}\] As $H^0(U_{0,-i})=0$, from this short exact sequence we derive
an isomorphism
$$ H^1(U_{i-1,0})\xrightarrow{\sim} \ker (H^1(U_{i,0})\xrightarrow{N^i} H^1(U_{0,-i})).
$$ In particular the natural map $H^1(U_{i-1,0})\rightarrow
H^1(U_{i,0})$ is injective. As $H^1(U_{0,-i})$ injects into
$H^1(\BB_{\st,E})$, we have
$$ \ker(H^1(U_{i,0})\xrightarrow{N^i}
H^1(\BB_{\st,E}))=\ker(H^1(U_{i,0})\xrightarrow{N^i} H^1(U_{0,-i})).
$$ It follows that $\ker(H^1(U_{i,0})\xrightarrow{N}
H^1(\BB_{\st,E}))$ lies in the image of $H^1(U_{i-1,0})\rightarrow
H^1(U_{i,0})$. Since $H^1(U_{i-1,0})$ injects into $H^1(U_{i,0})$,
we have an isomorphism
$$ \ker(H^1(U_{i-1,0})\xrightarrow{N} H^1(\BB_{\st,E}))
\xrightarrow{\sim} \ker(H^1(U_{i,0})\xrightarrow{N}
H^1(\BB_{\st,E})).
$$ This completes the inductive proof.
\end{proof}

\begin{cor} The inclusion $E \subset \BB_{\st,E}^{\varphi=1}$
induces an isomorphism
$$ H^1(E) \xrightarrow{\sim} \ker(H^1(\BB_{\st,E}^{\varphi=1})\xrightarrow{N} H^1(\BB_{\st,E})) .
$$
\end{cor}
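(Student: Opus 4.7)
The plan is to deduce the corollary from Proposition \ref{prop:isom} by letting $i \to \infty$. The key structural observation is that
$$ \BB_{\st,E}^{\varphi=1} = \bigcup_{i\geq 1} U_{i,0}, $$
an increasing union of closed $G_{\BQ_p}$-stable subspaces. Indeed, since $\BB_{\st,E} = \BB_{\cris,E}[u]$, any $x \in \BB_{\st,E}^{\varphi=1}$ is a polynomial in $u$ of some finite degree $i$, and thus satisfies $N^{i+1}x = 0$, i.e. lies in $U_{i,0}$.

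Granting this, the proof proceeds in two parts. First, the injectivity of $H^1(E) \to H^1(\BB_{\st,E}^{\varphi=1})$: the proposition already gives an isomorphism $H^1(E) \xrightarrow{\sim} \ker(H^1(U_{1,0}) \xrightarrow{N} H^1(\BB_{\st,E}))$, and in particular $H^1(E) \hookrightarrow H^1(U_{1,0}) \to H^1(\BB_{\st,E}^{\varphi=1})$ is injective if the second arrow is. For the latter, note that the proof of Proposition \ref{prop:isom} shows the transition maps $H^1(U_{i-1,0}) \hookrightarrow H^1(U_{i,0})$ are all injective. Second, surjectivity onto the kernel: given a continuous cocycle $c : G_{\BQ_p} \to \BB_{\st,E}^{\varphi=1}$ with $[N \circ c] = 0$ in $H^1(\BB_{\st,E})$, the compactness of $G_{\BQ_p}$ forces $c(G_{\BQ_p})$ to be a bounded subset of $\BB_{\st,E}^{\varphi=1}$. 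Because the union $\bigcup_i U_{i,0}$ is exhaustive and each $U_{i,0}$ is closed, the image of $c$ is contained in some $U_{i,0}$. Proposition \ref{prop:isom} then produces $d : G_{\BQ_p} \to E$ and $z \in U_{i,0} \subset \BB_{\st,E}^{\varphi=1}$ with $c = d + (\sigma - 1)z$, proving that $[c]$ lies in the image of $H^1(E)$.

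The main obstacle is the last topological step, namely showing that a continuous cocycle into the exhaustive union $\bigcup_i U_{i,0}$ factors through some $U_{i,0}$. For discrete modules this is automatic, and for the natural topology on $\BB_{\st,E}$ (for which each $U_{i,0}$ is closed and the filtration is strict), it follows from a standard boundedness argument on the image of the compact group $G_{\BQ_p}$. Apart from this, the corollary is a formal consequence of Proposition \ref{prop:isom} and the observation $\BB_{\st,E}^{\varphi=1} = \bigcup_i U_{i,0}$.
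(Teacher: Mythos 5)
Your overall strategy---reduce to Proposition \ref{prop:isom} by observing that $\BB_{\st,E}^{\varphi=1}=\bigcup_{i\ge 1}U_{i,0}$---is exactly what the paper does, and your injectivity argument is essentially correct (though you should spell out the step you leave implicit: if a class in $H^1(E)$ becomes trivial in $H^1(\BB_{\st,E}^{\varphi=1})$, the coboundary element $y$ with $c_\sigma=(\sigma-1)y$ lies in a single $U_{i,0}$, so the class already dies in $H^1(U_{i,0})$, and then Proposition \ref{prop:isom} finishes it; ``injectivity of transition maps'' alone is not a complete justification).

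The real gap is in your surjectivity step. You try to show that the image of the cocycle $c$ itself lands in some $U_{i,0}$, and for this you invoke compactness of $G_{\BQ_p}$ plus a ``standard boundedness argument.'' This is not standard, and not obviously true for the topology actually placed on $\BB_{\st,E}$: being an increasing union of closed submodules does not by itself imply that a compact subset is contained in one of them (that requires something like a strict inductive-limit structure, which you would need to verify). Moreover, you are trying to prove a statement about \emph{arbitrary} continuous cocycles into $\BB_{\st,E}^{\varphi=1}$, which is stronger than needed.

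The paper sidesteps all of this with a purely algebraic trick that uses the hypothesis you have not yet exploited: since $[N\circ c]=0$ in $H^1(\BB_{\st,E})$, there is a single element $z\in\BB_{\st,E}$ with $N(c_\sigma)=(\sigma-1)z$ for every $\sigma$. As $z$ is a polynomial in $u$, $N^i z=0$ for some $i$, hence
$$N^{i+1}(c_\sigma)=N^i\bigl((\sigma-1)z\bigr)=(\sigma-1)\,N^i z=0,$$
so $c_\sigma\in U_{i,0}$ for \emph{all} $\sigma$ simultaneously. No topology is needed. I would replace your boundedness paragraph with this argument; as written, it is a genuine gap.
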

\begin{proof} First we prove that $H^1(E)\rightarrow
H^1(\BB_{\st,E}^{\varphi=1})$ is injective. Let $c$ be a $1$-cocycle
with values in $E$. If the image of $[c]$ in
$H^1(\BB_{\st,E}^{\varphi=1})$ is zero, then there exists some $y\in
\BB_{\st,E}^{\varphi=1}$ such that $c_\sigma=(\sigma-1)y$ for all
$\sigma\in G_{\BQ_p}$. As $\BB_{\st,E}^{\varphi=1}=\cup_{i\geq
1}U_{i,0}$, $y$ is in $U_{i,0}$ for some $i\geq 1$, which implies
that the image of $[c]$ in $H^1(U_{i,0})$ is zero. But by
Proposition \ref{prop:isom}, $H^1(E)$ injects to $H^1(U_{i,0})$, so
$[c]=0$ (in $H^1(E)$).

Now, let $c$ be a $1$-cocycle with values in
$\BB_{\st,E}^{\varphi=1}$ such that the image of $[c]$ by $ N:
H^1(\BB_{\st,E}^{\varphi=1}) \rightarrow H^1(\BB_{\st,E})$ is zero.
Then there exists some $z\in \BB_{\st,E}$ such that $N(c_\sigma)=
(\sigma-1)z$ for all $\sigma\in G_{\BQ_p}$. Let $i$ be a positive
integer such that $N^i(z)=0$. Then $c_\sigma\in U_{i,0}$ for all
$\sigma\in G_{\BQ_p}$. In other words, $[c]$ comes from an element
in $\ker(H^1(U_{i,0})\xrightarrow{N} H^1(\BB_{\st,E}))$ by the map
$H^1(U_{i,0})\rightarrow H^1(\BB_{\st,E}^{\varphi=1})$. So by
Proposition \ref{prop:isom}, $[c]$ comes from an element in $H^1(E)$
by the map $H^1(E)\rightarrow H^1(\BB_{\st,E}^{\varphi=1})$.
\end{proof}

\section{Some facts on Galois representations}\label{sec:gal-rep}

Throughout this section a {\it filtration} on an $E$-vector space
$D$ means an exhaustive descending $\BZ$-indexed filtration.

\subsection{$\BX_\st$ and $\BX_\dR$}

We will use the functors $\BX_{\st}$ and $\BX_{\dR}$ defined in
\cite{Cz2010}. These functors were already used in \cite{CzFon} to
show that any admissible filtered $(\varphi,N)$-module comes from a
Galois representation. In \cite{CzFon} $\BX_\st$ and $\BX_\dR$ are
denoted by $V^0_{st}$ and $V^1_{st}$ respectively.

First we recall the notions of $E$-$(\varphi,N)$-modules, filtered
$E$-$(\varphi,N)$-modules and admissible filtered
$E$-$(\varphi,N)$-modules \cite{BM} (see also \cite{Cz2010}). They
are the $E$-linear versions of $(\varphi,N)$-modules, filtered
$(\varphi,N)$-modules and admissible filtered $(\varphi,N)$-modules
defined by Fontaine \cite{Fontaine-b}.

An {\it $E$-$(\varphi,N)$-module} is an $E$-vector space with
$E$-linear operators $\varphi$ and $N$ such that $N \varphi=p\varphi
N$. If $D$ is a finite-dimensional $E$-$(\varphi,N)$-module, we put
$$ t_N(D)=v_p(\det \varphi)$$ where $v_p$ is the valuation on $E$
such that $v_p(p)=1$.

A {\it filtration} $\Fil^\bullet$ on an $E$-vector space $D$ is a
collection of $E$-vector subspaces $\Fil^jD$ ($j\in\BZ$) such that
$\Fil^{j+1}D\subset \Fil^j D$, $\cap_{j\in\BZ}\Fil^jD=0$ and
$\cup_{j\in\BZ}\Fil^jD=D$. A {\it filtered $E$-module} is an
$E$-vector space with a filtration. If $(D,\Fil) $ is a
finite-dimensional filtered $E$-module, we put
$$ t_H(D,\Fil)=\sum_{j\in\BZ}j \dim_E(\Fil^jD/\Fil^{j+1}D).$$

A {\it filtered $E$-$(\varphi,N)$-module} $(D,\Fil)$ is an
$E$-$(\varphi,N)$-module $D$ equipped with a filtration $\Fil$. Such
a module is called {\it admissible} if $D$ is finite-dimensional
over $E$, if $\varphi$ is invertible and if $t_H(D,\Fil)=t_N(D)$ and
$t_H(D',\Fil)\leq t_N(D')$ for every $E$-$(\varphi,N)$-submodule
$D'$ equipped with the induced filtration.

Note that, if $D$, $D_1$ and $D_2$ are filtered
$E$-$(\varphi,N)$-modules, then there exist natural filtered
$E$-$(\varphi,N)$-module structures on $D^*$ and on $D_1\otimes_E
D_2$.

If $V$ is a finite-dimensional $E$-representation of $G_{\BQ_p}$,
then $\BD_\st(V)=(\BB_{\st,E}\otimes_E V)^{G_{\BQ_p}}$ is a filtered
$E$-$(\varphi,N)$-module induced from the natural filtered
$E$-$(\varphi,N)$-module structure on $\BB_{\st,E}\otimes_E V$. We
always have $\dim_E \BD_\st(V)\leq \dim_E V$, and say that $V$ is
{\it semistable} if $\dim_E\BD_\st(V)=\dim_E V$.

If $D$ is an $E$-$(\varphi,N)$-module, let $\BX_\st(D)$ be the
$\BB_{\cris,E}^{\varphi=1}$-module defined by
$$\BX_\st(D)=(\BB_{\st,E}\otimes_E D)^{\varphi=1,N=0} . $$
If $(D,\Fil)$ is a filtered $E$-module, let $\BX_\dR(D, \Fil)$ or
just $\BX_\dR(D)$ if there is no confusion, be the $\BB^+_{\dR,
E}$-module
$$\BX_\dR(D,\Fil) = (\BB_{\dR,E}\otimes_E D)/\Fil^0(\BB_{\dR,E}\otimes_E D) .  $$
By \cite[Proposition 5.1, Proposition 5.2]{CzFon} $\BX_\st$ and
$\BX_\dR$ are exact.

If $(D,\Fil)$ is a filtered $E$-$(\varphi,N)$-module, then there is
a natural $E$-linear map $\BX_\st(D)\rightarrow \BX_\dR(D,\Fil)$
induced by the inclusion $\BB_{\st,E}\otimes_E D\rightarrow
\BB_{\dR, E}\otimes_E D$. Let $\BV_\st(D,\Fil)$ be the kernel of
this map, which is an $E$-vector space.

By \cite[Theorem 2.1]{Cz2010} $\BV_{\st}$ is an equivalence of
categories from the category of admissible filtered
$E$-$(\varphi,N)$-modules to the category of semistable
$E$-representations of $G_{\BQ_p}$, with quasi-inverse $\BD_\st$.
Furthermore, $\BV_\st$ and $\BD_\st$ respect tensor products and
duals.

If $(D,\Fil)$ is an admissible filtered $E$-$(\varphi,N)$-module,
then the sequence \begin{equation}\label{eq:XstXdR-exact} \xymatrix{
0\ar[r] & \BV_\st(D,\Fil) \ar[r] & \BX_\st(D) \ar[r] &
\BX_\dR(D,\Fil) \ar[r] & 0 }\end{equation} is exact, and the natural
map
$$ \BB_{\st,E}\otimes_E \BV_\st(D,\Fil) \rightarrow \BB_{\st,E}\otimes_E D
$$ is an isomorphism respecting the actions of $G_{\BQ_p}$, $\varphi$, $N$ and the
filtrations.

If $D$ is an $E$-$(\varphi,N)$-module, and $e^*$ is an element in
the dual $E$-$(\varphi,N)$-module $D^*$, we have a
$G_{\BQ_p}$-equivariant map $$  \pi_{e^*}: \BX_{\st}(D)\rightarrow
\BB_{\st,E}, \ \ \ x\mapsto <e^*, x>.
$$ Here $<\cdot, \cdot>$ denotes the $\BB_{\st,E}$-bilinear pairing
$$(\BB_{\st,E}\otimes_E D^*)\times (\BB_{\st,E}\otimes_E
D)\rightarrow \BB_{\st,E}$$ induced by the canonical $E$-bilinear
pairing $D^*\times D\rightarrow E$.

\begin{lem} \label{lem:Bst-proj}
\begin{enumerate}
\item\label{it:Bst-proj-a} We have $N \circ \pi_{e^*}=\pi_{N e^*}$.
\item\label{it:Bst-proj-b} If $N^{i+1}e^*=0$, then the image of $\pi_{e^*}$ is in
$\BB_{\st,E}^{N^{i+1}=0}$. If furthermore $\varphi(e^*)=e^*$, then
the image of $\pi_{e^*}$ is in $U_{i,0}$.
\end{enumerate}
\end{lem}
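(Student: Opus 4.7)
The plan is to reduce both parts to the observation that $N$ and $\varphi$ commute with the pairing $\langle\cdot,\cdot\rangle$ in the expected way, after which the defining properties of $\BX_\st(D)$ immediately yield the conclusions. Concretely, I extend $N$ and $\varphi$ from $\BB_{\st,E}$ to $\BB_{\st,E}\otimes_E D$ and $\BB_{\st,E}\otimes_E D^*$ in the standard way: $N$ acts as a derivation, $\varphi$ multiplicatively. The $\BB_{\st,E}$-bilinear extension of the canonical $E$-bilinear pairing $D^*\times D\to E$ then satisfies
\[
N\langle u,v\rangle=\langle Nu,v\rangle+\langle u,Nv\rangle,\qquad \varphi\langle u,v\rangle=\langle \varphi u,\varphi v\rangle,
\]
both of which are routine consequences of bilinearity together with the corresponding rules on $\BB_{\st,E}$.

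For part (a), I view $e^*\in D^*$ as $1\otimes e^*\in \BB_{\st,E}\otimes_E D^*$; under the derivation extension this element is sent by $N$ to $1\otimes Ne^*$, which is identified with $Ne^*\in D^*$. For $x\in\BX_\st(D)$ the defining condition $Nx=0$ kills one term in the Leibniz identity above, leaving $N\pi_{e^*}(x)=\pi_{Ne^*}(x)$, which is exactly (a).

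For part (b), the first assertion is obtained by iterating (a): $N^{i+1}\circ\pi_{e^*}=\pi_{N^{i+1}e^*}$, which vanishes by hypothesis, so the image of $\pi_{e^*}$ lies in $\BB_{\st,E}^{N^{i+1}=0}$. For the $\varphi$-part, the multiplicativity identity combined with $\varphi(x)=x$ (part of the definition of $\BX_\st(D)$) and the additional hypothesis $\varphi(e^*)=e^*$ gives
\[
\varphi\pi_{e^*}(x)=\langle\varphi e^*,\varphi x\rangle=\langle e^*,x\rangle=\pi_{e^*}(x),
\]
so the image also sits inside $\BB_{\st,E}^{\varphi=1}$, hence in $U_{i,0}=\BB_{\st,E}^{N^{i+1}=0,\varphi=1}$.

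The statement is essentially formal, so I do not anticipate any genuine obstacle; the only mild subtlety is fixing conventions for how $N$ and $\varphi$ extend to the tensor products and verifying that the embedding $D^*\hookrightarrow \BB_{\st,E}\otimes_E D^*$ via $e^*\mapsto 1\otimes e^*$ is compatible with both operators, which is the standard convention used throughout \cite{CzFon} and \cite{Cz2010}.
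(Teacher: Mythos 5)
Your proof is correct and takes essentially the same route as the paper: both rest on the Leibniz rule for $N$ on the pairing together with $Nx=0$, and multiplicativity of $\varphi$ together with $\varphi x = x$, for $x \in \BX_\st(D)$. You spell out the underlying tensor-product conventions slightly more explicitly than the paper, but the argument is identical.
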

\begin{proof} For every $x\in \BX_{\st}(D)$, as $Nx=0$, we have
$N<e^*,x>=<Ne^*,x>$.

If $N^{i+1}e^*=0$, then $N^{i+1}<e^*,x>=<N^{i+1}e^*,x>=0$. If
$\varphi e^*=e^*$, then $\varphi <e^*,x>=<\varphi e^*, \varphi
x>=<e^*,x>$ since $\varphi(x)=x$. So $<e^*,x>$ is in $U_{i,0}$.
\end{proof}

For $e^*\in D^*$, $\pi_{e^*}$ induces a map $
H^1(\BX_\st(D))\rightarrow H^1(\BB_{\st,E})$ again denoted by
$\pi_{e^*}$. If $\varphi(e^*)=e^*$,  $\pi_{e^*}$ induces  a map
$H^1(\BX_\st(D))\rightarrow H^1(\BB_{\st,E}^{\varphi=1})$ which will
be denoted by $\tilde{\pi}_{e*}$.

\subsection{Exactness of $H^i(\BX_\dR(-))$}

Let $D$ be a finite-dimensional $E$-vector space. For a basis
$\{e_1,\dots, e_n\}$ of $D$ over $E$ and a filtration $\Fil$ on $D$
we say that $\{e_1,\dots, e_n\}$ is {\it compatible} with $\Fil$ if
for every $i$, $\Fil^iD= \oplus_{j=1}^n \Fil^iD\cap Ee_j$. If we
write $f_j$ ($j=1,\dots, n$) for the largest integer such that
$\Fil^{f_j}D \cap E e_j\neq 0$, then $\{e_1,\dots, e_n\}$ is
compatible with $\Fil$, if and only if $\Fil^i
D=\bigoplus\limits_{j:f_j\geq i} E e_j$ for every $i$. In this case
we have
$$ \Fil^i (\BB_{\dR,E}\otimes_E D) = \bigoplus_j \Fil^{i-f_j}B_{\dR,E} \cdot e_j. $$

\begin{lem}\label{lem:basis}
Let $$\xymatrix{ 0 \ar[r] & D_1 \ar[r] & D \ar[r] & D_2\ar[r] & 0
}$$ be a short exact sequence of filtered $E$-modules. If
$\{e_1,\dots, e_s\}$ is a basis of $D_1$ and $\{\bar{e}_{s+1},\dots,
\bar{e}_n\}$ is a basis of $D_2$ over $E$ compatible with the
filtration on $D_1$ and that on $D_2$ respectively, then there exist
liftings $e_j$ of $\bar{e}_j$ $(j=s+1,\dots, n)$ such that
$\{e_1,\dots, e_n\}$ is a basis of $D$ compatible with the
filtration.
\end{lem}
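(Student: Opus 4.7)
The plan is to lift each $\bar{e}_j$ ($j>s$) into the appropriate filtration step of $D$ using the fact that the filtration on $D_2$ is the quotient filtration, and then verify that the resulting system forms a basis compatible with the filtration on $D$.

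More precisely, for $j=s+1,\dots,n$ let $f_j$ denote the largest integer such that $\bar{e}_j\in \Fil^{f_j}D_2$; for $j=1,\dots,s$ let $f_j$ denote the largest integer such that $e_j\in \Fil^{f_j}D_1$. Because the exact sequence is one of filtered $E$-modules, the filtration on $D_2$ equals the quotient filtration induced by $D\twoheadrightarrow D_2$, so $\Fil^{f_j}D_2$ is the image of $\Fil^{f_j}D$, and there exists a lift $e_j\in \Fil^{f_j}D$ of $\bar{e}_j$; fix such a lift. Similarly, since the filtration on $D_1$ is the subspace filtration, $e_j\in \Fil^{f_j}D$ for $j\leq s$ as well. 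That $\{e_1,\dots,e_n\}$ is an $E$-basis of $D$ is immediate: its image in $D_2$ is a basis, and the kernel of the projection is $D_1$, on which $\{e_1,\dots,e_s\}$ is a basis.

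It remains to check compatibility, i.e.\ $\Fil^iD=\bigoplus_{j:f_j\geq i}Ee_j$ for every $i\in \BZ$. The inclusion $\supseteq$ follows from $e_j\in \Fil^{f_j}D$. For the reverse, take $x\in \Fil^iD$ and write $x=\sum_{j=1}^n a_je_j$ with $a_j\in E$. The image $\bar{x}=\sum_{j>s}a_j\bar{e}_j$ lies in $\Fil^iD_2$, so compatibility of $\{\bar{e}_{s+1},\dots,\bar{e}_n\}$ forces $a_j=0$ whenever $j>s$ and $f_j<i$. The element
$$y=x-\sum_{\substack{j>s\\ f_j\geq i}}a_je_j=\sum_{j\leq s}a_je_j$$
then lies in $D_1\cap \Fil^iD=\Fil^iD_1$, and compatibility of $\{e_1,\dots,e_s\}$ with the filtration on $D_1$ forces $a_j=0$ for $j\leq s$ with $f_j<i$. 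This gives $x\in \bigoplus_{j:f_j\geq i}Ee_j$ as required.

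The only genuinely substantive step is the first one, namely the existence of a lift in the correct filtration step, which relies on the (standard) convention that in an exact sequence of filtered modules the subobject and quotient carry the induced and quotient filtrations respectively; the rest is bookkeeping with the two compatible bases.
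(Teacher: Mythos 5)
Your proof is correct and takes essentially the same approach as the paper: you define the jump indices $f_j$ and lift $\bar e_j$ into $\Fil^{f_j}D$ exactly as the paper does. The only difference is in the final verification of $\Fil^iD\subseteq\bigoplus_{f_j\ge i}Ee_j$: you run a direct element chase (project to $D_2$, subtract off, land in $D_1$), whereas the paper simply counts dimensions via $\dim_E\Fil^iD=\dim_E\Fil^iD_1+\dim_E\Fil^iD_2$. Both verifications rest on the same exactness of $0\to\Fil^iD_1\to\Fil^iD\to\Fil^iD_2\to 0$, so the content is the same.
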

\begin{proof} Let $f_j$ ($j=1,\dots, n$) be the largest integer
such that $\Fil^{f_j}D_1 \cap E e_j \neq 0$ for $j=1,\dots, s$, and
$\Fil^{f_j}D_2 \cap E \bar{e}_i\neq 0$ for $j=s+1,\dots, n$. As the
filtration on $D_2$ is induced from that on $D$, there exists a
lifting $e_j$ of $\bar{e}_j$ in $\Fil^{f_j}D$ for every
$j=s+1,\dots, n$. Then $\bigoplus\limits_{j: f_j \geq i} E e_j$ is
contained in $\Fil^i D$. However, we have \begin{eqnarray*} \dim_E
\Fil^i D &=& \dim_E \Fil^i D_1 +\dim_E \Fil^i D_2 \\ & = & \sharp\{
j: 1\leq j\leq s, \ f_j\geq i \} + \sharp\{ j: s+1\leq j\leq n, \
f_j\geq i \} \\ &=& \sharp\{ j: 1\leq j\leq n, \ f_j\geq i \}
.\end{eqnarray*} Therefore $\Fil^iD = \bigoplus\limits_{j: f_j \geq
i} E e_j $.
\end{proof}

\begin{prop}\label{prop:split} If $$\xymatrix{0\ar[r] & D_1 \ar[r] & D\ar[r] & D_2 \ar[r] &
0}$$ is a short exact sequence of filtered $E$-modules, then
$$ \xymatrix{0\ar[r] & \BX_\dR(D_1) \ar[r] & \BX_\dR(D) \ar[r] & \BX_\dR(D_2) \ar[r] &
0} $$ is a split short exact sequence of $G_{\BQ_p}$-modules.
\end{prop}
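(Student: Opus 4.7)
The plan is to reduce the statement to producing a section $\iota: D_2 \to D$ in the category of filtered $E$-modules, and then apply the functor $\BX_\dR$ to that section. The exactness part is essentially cited earlier from \cite[Proposition 5.2]{CzFon}; the substantive content is the existence of a $G_{\BQ_p}$-equivariant splitting.

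First I would invoke Lemma \ref{lem:basis}: choose a basis $\{e_1,\dots,e_s\}$ of $D_1$ compatible with the filtration inherited from $D$, and a basis $\{\bar e_{s+1},\dots,\bar e_n\}$ of $D_2$ compatible with its quotient filtration. The lemma produces liftings $e_{s+1},\dots,e_n\in D$ such that $\{e_1,\dots,e_n\}$ is a basis of $D$ compatible with $\Fil$, with the common exponents $f_j$ satisfying $\Fil^iD=\bigoplus_{j:f_j\geq i}Ee_j$, $\Fil^iD_1=\bigoplus_{j\leq s,\,f_j\geq i}Ee_j$ and $\Fil^iD_2=\bigoplus_{j>s,\,f_j\geq i}E\bar e_j$.

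Next, I would define $\iota:D_2\to D$ by $\bar e_j\mapsto e_j$ for $j>s$. By construction $\iota$ is $E$-linear, sends $\Fil^iD_2$ into $\Fil^iD$, and composing with $D\twoheadrightarrow D_2$ gives the identity on $D_2$. Thus $\iota$ is a morphism of filtered $E$-modules splitting the original short exact sequence in that category. Applying the functor $\BX_\dR$, which is exact by \cite[Proposition 5.2]{CzFon}, yields a commutative diagram of $\BB^+_{\dR,E}$-modules
\begin{equation*}
\xymatrix{ 0\ar[r] & \BX_\dR(D_1)\ar[r] & \BX_\dR(D)\ar[r] & \BX_\dR(D_2)\ar@/_1.0pc/[l]_-{\BX_\dR(\iota)}\ar[r] & 0, }
\end{equation*}
and $\BX_\dR(\iota)$ is the required section.

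Finally, to see that $\BX_\dR(\iota)$ is $G_{\BQ_p}$-equivariant, observe that the Galois action on $\BX_\dR(D)=(\BB_{\dR,E}\otimes_ED)/\Fil^0$ is induced entirely from the action on $\BB_{\dR,E}$, while $D$ carries trivial Galois action; hence any $E$-linear map $D_2\to D$ respecting filtrations induces after $-\otimes_E\BB_{\dR,E}$ a $G_{\BQ_p}$-equivariant map of the quotients. Thus no further check beyond functoriality is needed. The only mildly delicate point in the whole argument is the inductive lifting of compatible bases, which is precisely what Lemma \ref{lem:basis} isolates; once that lemma is in hand the splitting is essentially a formal consequence of the exactness and $E$-linearity of $\BX_\dR$.
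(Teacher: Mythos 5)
Your proof is correct and takes essentially the same approach as the paper: both rely on Lemma \ref{lem:basis} to obtain a compatible basis of $D$ extending one of $D_1$, and the splitting is $G_{\BQ_p}$-equivariant precisely because the chosen basis elements are Galois-fixed. The paper phrases this by writing $\BX_\dR(D_1)$, $\BX_\dR(D)$, $\BX_\dR(D_2)$ explicitly as direct sums of $\BB_{\dR,E}/\Fil^{f_j}\BB_{\dR,E}$, whereas you package the same data as an explicit section $\iota:D_2\to D$ in filtered $E$-modules and apply functoriality of $\BX_\dR$; these are only cosmetically different.
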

\begin{proof} Let $\{e_j\}_{j=1}^n$, $\{\bar{e}_j\}_{j=s+1}^n$ and $\{f_j\}_{j=1}^n$ be
as in Lemma \ref{lem:basis} and its proof. By the definition of
$\BX_\dR(-)$ we have
\begin{eqnarray*} \BX_\dR(D_1) & = & \oplus_{j=1}^s
(\BB_{\dR,E}/\Fil^{f_j}\BB_{\dR,E})\cdot e_j , \\ \BX_\dR(D) & = &
\oplus_{j=1}^n (\BB_{\dR,E}/\Fil^{f_j}\BB_{\dR,E})\cdot e_j , \\
\BX_\dR(D_2) & = & \oplus_{j=s+1}^n
(\BB_{\dR,E}/\Fil^{f_j}\BB_{\dR,E})\cdot \bar{e}_j .
\end{eqnarray*} As $\{e_j\}_{j=1}^n$ and $\{\bar{e}_j\}_{j=s+1}^n$
are fixed by $G_{\BQ_p}$, our assertion is clear now.
\end{proof}

The following follows directly from Proposition \ref{prop:split}.

\begin{cor} \label{cor:dR-exact} If $$\xymatrix{0\ar[r] & D_1 \ar[r] & D\ar[r] & D_2 \ar[r] &
0}$$ is a short exact sequence of filtered $E$-modules, then
$$ \xymatrix{0\ar[r] & H^i(\BX_\dR(D_1)) \ar[r] & H^i(\BX_\dR(D)) \ar[r] & H^i(\BX_\dR(D_2)) \ar[r] &
0} $$ is exact.
\end{cor}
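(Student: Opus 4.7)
The plan is to deduce this corollary immediately from Proposition \ref{prop:split}. That proposition provides strictly more than just exactness: it tells us that the short exact sequence
$$0 \to \BX_\dR(D_1) \to \BX_\dR(D) \to \BX_\dR(D_2) \to 0$$
is \emph{split} in the category of $G_{\BQ_p}$-modules. Indeed, inspection of its proof shows that the splitting arises by choosing compatible bases via Lemma \ref{lem:basis}; the resulting direct-sum decomposition
$$\BX_\dR(D) \;=\; \bigoplus_{j=1}^{n}\bigl(\BB_{\dR,E}/\Fil^{f_j}\BB_{\dR,E}\bigr)\cdot e_j$$
is $G_{\BQ_p}$-equivariant precisely because the chosen $e_j$ lie in the trivially-acted-on module $D$. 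Thus one has a $G_{\BQ_p}$-equivariant isomorphism $\BX_\dR(D) \cong \BX_\dR(D_1) \oplus \BX_\dR(D_2)$.

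Next I apply the Galois cohomology functor $H^i(G_{\BQ_p}, -)$. Since this is an additive functor (being the right derived functor of the additive functor of $G_{\BQ_p}$-invariants), it commutes with finite direct sums, and so I obtain
$$H^i(\BX_\dR(D)) \;\cong\; H^i(\BX_\dR(D_1)) \oplus H^i(\BX_\dR(D_2)).$$
This is equivalent to saying that the induced sequence
$$0 \to H^i(\BX_\dR(D_1)) \to H^i(\BX_\dR(D)) \to H^i(\BX_\dR(D_2)) \to 0$$
is split short exact for every $i \geq 0$, which is exactly the statement of the corollary.

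There is no real obstacle to overcome: the corollary is a purely formal consequence of Proposition \ref{prop:split} and the additivity of cohomology. The only point worth emphasising is that the splitting in Proposition \ref{prop:split} is genuinely $G_{\BQ_p}$-equivariant and not merely $E$-linear; without this one would only obtain a long exact sequence in Galois cohomology, and the short sequence in a fixed degree $i$ would generally fail to be exact on the right. Since the equivariance of the splitting has already been verified in the proof of Proposition \ref{prop:split}, nothing further is required.
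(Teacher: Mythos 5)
Your proof is correct and takes exactly the route the paper intends: the paper introduces the corollary with the remark that it ``follows directly from Proposition~\ref{prop:split},'' and your argument simply spells out the standard reasoning — a $G_{\BQ_p}$-equivariant splitting gives a direct-sum decomposition, cohomology is additive, hence the induced sequence in each degree $i$ is (split) short exact. Nothing to add.
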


\subsection{The kernel of $H^1(\BX_\st(D))\rightarrow H^1(\BX_\dR(D,\Fil))$}

In this subsection we study the kernel of
$H^1(\BX_\st(D))\rightarrow H^1(\BX_\dR(D,\Fil))$. When $(D,\Fil)$
is admissible, from the short exact sequence (\ref{eq:XstXdR-exact})
we see that this kernel coincides with the image of
$H^1(\BV_\st(D))\rightarrow H^1(\BX_\st(D))$.

We fix a finite-dimensional $E$-$(\varphi,N)$-module $D$. For two
filtrations $\Fil_1$ and $\Fil_2$ on $D$, we write $\Fil_1\approx
\Fil_2$ if $\Fil^0_1D=\Fil^0_2D$. Then $\approx$ is an equivalence
relation on the set of filtrations on $D$.

\begin{prop}\label{prop:same-kernel}
If $\Fil_1\approx \Fil_2$, then the kernel of $H^1(\BX_\st(D))\rightarrow H^1(\BX_\dR(D,
\Fil_1))$ coincides with the kernel of $H^1(\BX_\st(D))\rightarrow
H^1(\BX_\dR(D,\Fil_2))$.
\end{prop}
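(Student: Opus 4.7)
The plan is to reduce both $\BX_\dR(D,\Fil_1)$ and $\BX_\dR(D,\Fil_2)$ to a common, $\Fil_i$-independent target $\BX_\dR(D/V,\bar\Fil_\mathrm{std})$, where $V:=\Fil_1^0 D=\Fil_2^0 D$ and $\bar\Fil_\mathrm{std}$ is the two-step filtration on $D/V$ defined by $\bar\Fil_\mathrm{std}^j(D/V)=D/V$ for $j\leq -1$ and $\bar\Fil_\mathrm{std}^j(D/V)=0$ for $j\geq 0$. The reduction is carried out in two cohomological steps, each producing an injection on $H^1$.

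First, I apply Corollary \ref{cor:dR-exact} to the short exact sequence $0\to V\to D\to D/V\to 0$ of filtered $E$-modules (with induced subspace and quotient filtrations from $\Fil_i$). The subspace filtration on $V$ satisfies $\Fil_i^0 V=V$; choosing a basis of $V$ compatible with $\Fil_i|_V$ via Lemma \ref{lem:basis} and performing a basis computation (in the spirit of the proof of Proposition \ref{prop:split}), one identifies $\BX_\dR(V,\Fil_i|_V)$ with a direct sum of $G_{\BQ_p}$-modules of the form $\BB_{\dR,E}/\Fil^m\BB_{\dR,E}$ with $m\leq 0$, each of which is acyclic by Lemma \ref{lem:coh-wellknown}. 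Hence $H^1(\BX_\dR(V,\Fil_i|_V))=0$, so the natural map $H^1(\BX_\dR(D,\Fil_i))\xrightarrow{\sim} H^1(\BX_\dR(D/V,\bar\Fil_i))$ is an isomorphism. Second, the quotient filtration $\bar\Fil_i$ on $D/V$ has $\bar\Fil_i^0(D/V)=0$, hence $\bar\Fil_i\subseteq\bar\Fil_\mathrm{std}$; this gives $\bar\Fil_i^0(\BB_{\dR,E}\otimes D/V)\subseteq\bar\Fil_\mathrm{std}^0(\BB_{\dR,E}\otimes D/V)$ and a surjection $\BX_\dR(D/V,\bar\Fil_i)\twoheadrightarrow \BX_\dR(D/V,\bar\Fil_\mathrm{std})$. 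A basis of $D/V$ compatible with $\bar\Fil_i$ expresses the kernel as a direct sum of modules $\Fil^1\BB_{\dR,E}/\Fil^g\BB_{\dR,E}$ with $g\geq 1$, again with vanishing $H^1$ by Lemma \ref{lem:coh-wellknown}. The long exact sequence thus yields an injection $H^1(\BX_\dR(D/V,\bar\Fil_i))\hookrightarrow H^1(\BX_\dR(D/V,\bar\Fil_\mathrm{std}))$.

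Composing these two maps produces an injection $H^1(\BX_\dR(D,\Fil_i))\hookrightarrow H^1(\BX_\dR(D/V,\bar\Fil_\mathrm{std}))$ through which the canonical map $H^1(\BX_\st(D))\to H^1(\BX_\dR(D,\Fil_i))$ factors. By functoriality of $\BX_\st$ and $\BX_\dR$, the composed map $H^1(\BX_\st(D))\to H^1(\BX_\dR(D/V,\bar\Fil_\mathrm{std}))$ also factors as $H^1(\BX_\st(D))\to H^1(\BX_\st(D/V))\to H^1(\BX_\dR(D/V,\bar\Fil_\mathrm{std}))$, which depends only on $V$ and $D$, not on $\Fil_i$. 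Injectivity then forces the kernel of $H^1(\BX_\st(D))\to H^1(\BX_\dR(D,\Fil_i))$ to agree with the kernel of this $i$-independent composed map, so the two kernels coincide. The main technical hurdle is the pair of cohomological vanishings used to obtain the injections; each reduces, via an explicit basis decomposition, to applying Lemma \ref{lem:coh-wellknown} to quotients $\Fil^a\BB_{\dR,E}/\Fil^b\BB_{\dR,E}$ in the appropriate range (namely $b\leq 0$ in the first step and $a\geq 1$ in the second).
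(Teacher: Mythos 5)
Your proof is correct, and it takes a genuinely different route from the paper's. The paper interposes a \emph{common source} for the two $H^1(\BX_\dR)$'s: picking a basis of $D$ compatible with both $\Fil_1$ and $\Fil_2$, it constructs the intermediate module $M=(\BB_{\dR,E}\otimes_E D)/(\Fil_1^0\cap\Fil_2^0)$ through which $\BX_\st(D)\to\BX_\dR(D,\Fil_\ell)$ factors, then shows $H^1(M)\hookrightarrow H^1(\BX_\dR(D,\Fil_\ell))$ by applying Lemma~\ref{lem:coh-wellknown} coordinate-by-coordinate. You instead construct a \emph{common target}: you kill the summand carried by $V=\Fil_1^0 D=\Fil_2^0 D$ using Proposition~\ref{prop:split}/Corollary~\ref{cor:dR-exact} and the acyclicity of $\BB_{\dR,E}/\Fil^m$ for $m\le 0$, then coarsen the quotient filtration on $D/V$ down to the two-step one, obtaining an injection $H^1(\BX_\dR(D,\Fil_i))\hookrightarrow H^1(\BX_\dR(D/V,\bar\Fil_{\mathrm{std}}))$ and a composed map out of $H^1(\BX_\st(D))$ that manifestly does not depend on $\Fil_i$. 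Both approaches use Lemma~\ref{lem:coh-wellknown} as the cohomological engine; yours avoids the need for a single basis simultaneously compatible with both filtrations (the appeal to [CzFon, Prop.~3.1]) by treating the two filtrations against a $\Fil$-free benchmark, at the small cost of an extra layer of module-theoretic bookkeeping. Your argument makes especially transparent that the kernel depends only on the subspace $\Fil^0_i D$, not on the rest of the filtration.

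One small imprecision: you write that the composed map factors through ``$H^1(\BX_\st(D/V))$,'' but $V$ is merely an $E$-subspace of $D$, not a $(\varphi,N)$-submodule, so $D/V$ has no $(\varphi,N)$-structure and $\BX_\st(D/V)$ is not defined. The intended and correct statement is simpler: the composite $\BX_\st(D)\hookrightarrow\BB_{\dR,E}\otimes_E D\to\BB_{\dR,E}\otimes_E (D/V)\to(\BB_{\dR,E}\otimes_E (D/V))/(\Fil^1\BB_{\dR,E}\otimes_E (D/V))$ depends only on $V$ and not on $\Fil_i$, which is all you need. This does not affect the validity of the argument.
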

\begin{proof} By \cite[Proposition 3.1]{CzFon} there exists a basis $\{e_1,\dots,
e_n\}$ of $D$ compatible with both $\Fil_1$ and $\Fil_2$. Write
$f_{j,\ell}$ ($j=1,\dots, n$ and $ \ell=1,2$) for the largest
integer such that
$$\Fil^{f_{j,\ell}}_\ell D \cap E e_j\neq 0.$$  Put $\bar{f}_j=\min(f_{j,1}, f_{j,2})$.

Put $$ M = (\BB_{\dR,E}\otimes_E D )/ \Fil^0_1 (\BB_{\dR,E}\otimes_E
D )\cap \Fil^0_2(\BB_{\dR,E}\otimes_E D). $$ Note that
\begin{eqnarray*}
&& \Fil^0_\ell (\BB_{\dR,E}\otimes_E D ) = \bigoplus_{j=1}^n
\Fil^{-f_{j,\ell}}\BB_{\dR,E} \cdot e_j  , \ \  \ell=1,2, \\  &&
\Fil^0_1 (\BB_{\dR,E}\otimes_E D )\cap \Fil^0_2(\BB_{\dR,E}\otimes_E
D) =\bigoplus_{j=1}^n \Fil^{-\bar{f}_j}\BB_{\dR,E} \cdot e_j .
\end{eqnarray*}
So, for $\ell=1,2$ we have an exact sequence
\[\xymatrix{ 0 \ar[r] & \bigoplus_{j=1}^n (\Fil^{-f_{j,\ell}}\BB_{\dR,E}/\Fil^{-\bar{f}_{j}}\BB_{\dR,E})\cdot e_j
\ar[r] & M \ar[r] &  X_\dR(D, \Fil_\ell) \ar[r] & 0 .}\] As
$\Fil^0_1D=\Fil^0_2D$, $f_{j,1}\geq 0$ if and only if $f_{j,2}\geq
0$. Thus $\bar{f}_j\geq 0$ if and only if $f_{j,\ell}\geq 0$. So by
Lemma \ref{lem:coh-wellknown} we have
$$ H^1(\Fil^{-f_{j,\ell}}\BB_{\dR,E}/ \Fil^{-\bar{f}_{j}}\BB_{\dR,E}) =0.  $$
As a consequence, $ H^1(M)\rightarrow H^1(\BX_\dR(D,\Fil_\ell))$ is
injective.

Note that $\BX_\st(D)\rightarrow \BX_\dR(D, \Fil_\ell)$ $(\ell=1,2)$
factors through $\BX_\st(D)\rightarrow M$. Thus
$H^1(\BX_\st(D))\rightarrow H^1(\BX_\dR(D,\Fil_\ell))$ factors
through $H^1(\BX_\st(D))\rightarrow H^1(M)$. Since $H^1(M)$ injects
to $H^1(\BX_\dR(D,\Fil_\ell))$, the kernel of
$H^1(\BX_\st(D))\rightarrow H^1(\BX_\dR(D,\Fil_\ell))$ coincides
with the kernel of $H^1(\BX_\st(D))\rightarrow H^1(M)$.
\end{proof}

\subsection{The map $H^1(V)\rightarrow H^1(\BB_{\st,E}\otimes_E V)$}

\begin{prop}\label{prop:ht>0-a}
 If $V$ is a semistable $E$-representation of $G_{\BQ_p}$ with
Hodge--Tate weights $>0$, then any nontrivial extension of the
trivial representation $E$ of $G_{\BQ_p}$ by $V$ is not semistable.
\end{prop}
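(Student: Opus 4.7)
The plan is to translate to admissible filtered $E$-$(\varphi,N)$-modules via the Colmez-Fontaine equivalence. A semistable extension $0 \to V \to W \to E \to 0$ corresponds to an exact sequence $0 \to D_V \to D_W \to E \to 0$ in that category, where the quotient $E$ carries trivial $\varphi = 1$, $N = 0$ and filtration $\Fil^0 E = E$, $\Fil^1 E = 0$. Because the Hodge weights of $V$ are $\geq 1$, we have $\Fil^1 D_V = D_V$, and compatibility of the filtration then forces $\Fil^0 D_W = D_W$ and $\Fil^i D_W = \Fil^i D_V$ for $i \geq 1$. So the filtration on $D_W$ is completely determined by that on $D_V$, and splitting in the filtered $(\varphi,N)$-category reduces to splitting as $(\varphi,N)$-modules.

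Next I pick any lift $y \in D_W$ of $1 \in E$ and write $\varphi y = y + v_1$, $N y = v_2$ with $v_1, v_2 \in D_V$; the relation $N\varphi = p\varphi N$ forces $N v_1 = (p\varphi - 1) v_2$. Replacing $y$ by $y + v$ shows that the sequence splits iff there exists $v \in D_V$ with $(\varphi - 1) v = -v_1$ and $N v = -v_2$.

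The main obstacle is the key step: showing that $\varphi - 1$ and $p\varphi - 1$ are both invertible on $D_V$. The plan is to prove the stronger statement that every Newton slope of $\varphi$ on $D_V$ is $\geq 1$. Let $D' \subseteq D_V$ be the isoclinic component of smallest slope $s_{\min}$, constructed from the Dieudonn\'e-Manin decomposition of $D_V \otimes_E \overline{E}$ (it descends to $E$ since $s_{\min}$ is rational and the decomposition is Galois-equivariant). Then $D'$ is $\varphi$-stable by construction and $N$-stable because $N$ shifts slopes by $-1$, so $N|_{D'} = 0$. Admissibility of $D_V$ applied to the $(\varphi,N)$-subobject $D'$ gives $t_H(D') \leq t_N(D') = s_{\min} \dim D'$, while $\Fil^1 D_V = D_V$ forces $t_H(D') \geq \dim D'$. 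Hence $s_{\min} \geq 1$, and in particular neither $1$ nor $p^{-1}$ is an eigenvalue of $\varphi$ on $D_V$.

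Given this invertibility, the identity $N(\varphi - 1) = (p\varphi - 1) N$ (coming from $N\varphi = p\varphi N$) yields $N(\varphi - 1)^{-1} = (p\varphi - 1)^{-1} N$, so setting $v = -(\varphi - 1)^{-1} v_1$ gives $N v = -(p\varphi - 1)^{-1} N v_1 = -(p\varphi - 1)^{-1}(p\varphi - 1) v_2 = -v_2$, as required. Then $y + v$ is a $\varphi$-fixed, $N$-killed lift of $1$, automatically in $\Fil^0 D_W = D_W$; its $E$-span splits the filtered $(\varphi,N)$-module extension, and applying $\BV_\st$ transports this to a splitting $W \cong V \oplus E$, contradicting nontriviality. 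All steps besides the slope-positivity argument are either routine filtration bookkeeping or a two-line algebraic identity.
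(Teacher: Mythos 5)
Your proposal is correct and follows the same route as the paper: translate to filtered $(\varphi,N)$-modules, show all Newton slopes of $D_V$ are positive so that $\varphi - 1$ is invertible, construct a $\varphi$-fixed lift and check it is $N$-killed, and conclude the extension splits. The only differences are cosmetic: where the paper simply cites the ``Newton polygon above Hodge polygon'' fact to get positivity of slopes, you derive it directly by applying the admissibility inequality $t_H \leq t_N$ to the smallest-slope isoclinic piece $D'$ (taking care to check $N$-stability of $D'$); and where the paper observes $Ne \in D^{\varphi = p^{-1}} = 0$, you verify $Nv = -v_2$ by pushing the commutation relation $N(\varphi-1)^{-1} = (p\varphi - 1)^{-1}N$ through the formulas --- both valid, the paper's being slightly more economical.
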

\begin{proof} The
filtered $E$-$(\varphi,N)$-module attached to the trivial
representation $E$ is $D_0=E\cdot e_0$ with
\begin{eqnarray*} \varphi e_0=e_0, \ \ Ne_0=0, \ \  \Fil^0 D_0=D_0, \ \ \Fil^1 D_0=0.
\end{eqnarray*} Let $\widetilde{V}$ be an extension of $E$ by $V$ that is
a semistable representation of $G_{\BQ_p}$. Let $D$ and
$\widetilde{D}$ be the filtered $E$-$(\varphi,N)$-module attached to
$V$ and that attached to $\widetilde{V}$ respectively. Then we have
an exact sequence of filtered $E$-$(\varphi,N)$-modules
\begin{equation} \label{eq:fil-var-mod}
\xymatrix{ 0 \ar[r] & D \ar[r] & \widetilde{D} \ar[r] & D_0\ar[r] &
0.} \end{equation} Let $\{e_1,\dots, e_n\}$ be a basis of $D$ over
$E$, and let $A=(a_{ij})$ be the matrix of $\varphi$ with respect to
this basis so that $\varphi(e_i)=\sum_{j=1}^n a_{ji}e_j$. As $V$ is
of Hodge--Tate weights $>0$, we have $\Fil^1D=D$. By the fact that
the Newton polygon is above the Hodge polygon, the lowest slope of
eigenvalues of $A$ is positive. Thus $I_n-A$ is invertible, where
$I_n$ is the unit $n\times n$-matrix.

Let $\widetilde{e}$ be any lifting of $e_0$. Since
$\varphi(e_0)=e_0$, there are $c_1,\dots, c_n\in E$ such that
$\varphi(\widetilde{e})=\widetilde{e} +\sum\limits_{i=1}^n c_i e_i$.
As $I_n-A$ is invertible, there is a unique vector $(b_1,\dots,
b_n)^t$ such that $(I_n-A) \cdot (b_1,\dots, b_n)^t=(c_1,\dots,
c_n)^t$. Then $e=\widetilde{e}+\sum\limits_{i=1}^n b_i e_i$
satisfies $\varphi(e)=e$. From the relation $N\varphi=p\varphi N$ we
obtain $Ne\in D^{\varphi=p^{-1}}=0$. As $\Fil ^1 D_0=0$, we have
$e\notin \Fil^1 \widetilde{D}$. Hence the exact sequence
(\ref{eq:fil-var-mod}) splits and so $\widetilde{V}$ is a trivial
extension of $E$ by $V$.
\end{proof}

\begin{cor} \label{cor:ht>0} Let $V$ be a semistable $E$-representation of $G_{\BQ_p}$ with Hodge--Tate
weights $>0$. Then the following hold:
\begin{enumerate}
\item \label{it:coh-st-a}
The natural map $H^1(V)\rightarrow H^1(\BB_{\st,E}\otimes_E V)$ is
injective.
\item \label{it:coh-st-b}
Let $c$ be in $H^1(V)$. If for every $f\in
\mathrm{Hom}_{G_{\BQ_p}}(V, \BB_{\st,E})$, the image of $c$ by the
map $H^1(V)\rightarrow H^1(\BB_{\st,E})$ induced by $f$ is zero,
then $c=0$.
\end{enumerate}
\end{cor}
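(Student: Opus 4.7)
The plan is to deduce Corollary \ref{cor:ht>0} as a direct formal consequence of Proposition \ref{prop:ht>0-a}, via the long exact sequence of Galois cohomology and the semistable comparison isomorphism.

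\textbf{For part (a).} I would interpret a class $[c]\in H^1(V)$ as the isomorphism class of an extension
$$ 0\to V\to \widetilde V\to E\to 0 $$
of $E$-representations of $G_{\BQ_p}$. Tensoring with $\BB_{\st,E}$ is exact, and the resulting long exact sequence of $G_{\BQ_p}$-cohomology has connecting homomorphism $\delta\colon E=\BB_{\st,E}^{G_{\BQ_p}}\to H^1(\BB_{\st,E}\otimes_E V)$ that sends $1$ to the image of $[c]$ in $H^1(\BB_{\st,E}\otimes_E V)$. The assumption that this image vanishes therefore forces $\delta=0$, so $\BD_\st(\widetilde V)\twoheadrightarrow \BD_\st(E)=E$ is surjective. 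Combined with the inclusion $\BD_\st(V)\hookrightarrow\BD_\st(\widetilde V)$ and the semistability of $V$, this yields $\dim_E\BD_\st(\widetilde V)=\dim_E V+1=\dim_E\widetilde V$, so $\widetilde V$ is semistable. Proposition \ref{prop:ht>0-a} then forces $\widetilde V$ to be a trivial extension, i.e.\ $[c]=0$.

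\textbf{For part (b).} Since $V$ is semistable, the comparison isomorphism provides a $G_{\BQ_p}$-equivariant $\BB_{\st,E}$-linear isomorphism $\BB_{\st,E}\otimes_E V\cong \BB_{\st,E}\otimes_E D$, where $D=\BD_\st(V)$ carries the trivial $G_{\BQ_p}$-action. Fixing an $E$-basis $d_1,\dots,d_n$ of $D$ produces a decomposition
$$ H^1(\BB_{\st,E}\otimes_E V)\;=\;\bigoplus_{i=1}^n H^1(\BB_{\st,E})\,d_i, $$
and the coordinate projections $\pi_i\colon \BB_{\st,E}\otimes_E V\to\BB_{\st,E}$ are $G_{\BQ_p}$-equivariant because $G_{\BQ_p}$ acts trivially on $D$. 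The restriction $f_i:=\pi_i|_V$ then lies in $\mathrm{Hom}_{G_{\BQ_p}}(V,\BB_{\st,E})$, and by naturality of $H^1$ the map $H^1(V)\to H^1(\BB_{\st,E})$ induced by $f_i$ coincides with the $i$-th component of $H^1(V)\to H^1(\BB_{\st,E}\otimes_E V)$. Applying the hypothesis of (b) to each $f_i$ kills every component, so the image of $c$ in $H^1(\BB_{\st,E}\otimes_E V)$ vanishes, and part (a) yields $c=0$.

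\textbf{Expected obstacle.} There is essentially no deep difficulty once Proposition \ref{prop:ht>0-a} is available: (a) is a direct long-exact-sequence argument, and (b) reduces formally to (a) through the comparison isomorphism. The only point requiring care is the identification of the coordinate projections $\pi_i|_V$ with genuine elements of $\mathrm{Hom}_{G_{\BQ_p}}(V,\BB_{\st,E})$ realising the $i$-th component of $H^1(V)\to\bigoplus_i H^1(\BB_{\st,E})\,d_i$, which is immediate from the functoriality of $H^1$ under $G_{\BQ_p}$-equivariant maps together with the triviality of the $G_{\BQ_p}$-action on $D$.
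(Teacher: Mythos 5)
Your argument is correct and follows essentially the same route as the paper: part (a) is the standard connecting-homomorphism deduction from Proposition~\ref{prop:ht>0-a} (the paper states this reduction as immediate; you merely spell it out), and part (b) matches the paper's proof verbatim in substance, using the $G_{\BQ_p}$-invariant basis of $\BD_\st(V)$ to produce coordinate projections in $\mathrm{Hom}_{G_{\BQ_p}}(V,\BB_{\st,E})$ that detect the image of $c$ in $H^1(\BB_{\st,E}\otimes_E V)$.
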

\begin{proof} Assertion (\ref{it:coh-st-a}) follows immediately from
Proposition \ref{prop:ht>0-a}.

Next we prove (\ref{it:coh-st-b}). Let $D$ be the filtered
$E$-$(\varphi,N)$-module attached to $V$, and let $\{e_1,\dots,
e_n\}$ be a basis of $D$ over $E$. Let $\pi_i$ denote the projection
$$\BB_{\st,E}\otimes_E D=\BB_{\st,E}\otimes_E V \rightarrow
\BB_{\st,E} , \ \ \ \sum_{j=1}^n a_j e_j \mapsto a_i.$$ As
$e_1,\dots, e_n$ are fixed by $G_{\BQ_p}$, $\pi_i$ $(i=1,\dots, n)$
are in $\mathrm{Hom}_{G_{\BQ_p}}(\BB_{\st,E}\otimes_E V,
\BB_{\st,E})$. So the composition of $\pi_i$ and the inclusion
$V\hookrightarrow \BB_{\st,E}\otimes_E V$, denoted by
$\tilde{\pi}_i$, is in $\mathrm{Hom}_{G_{\BQ_p}}(V,\BB_{\st,E})$. In
fact, $\{\tilde{\pi}_1,\dots, \tilde{\pi}_n\}$ is a basis of
$\mathrm{Hom}_{G_{\BQ_p}}(V,\BB_{\st,E})$. Now the condition
$\tilde{\pi}_i(c)=0$ for $i=1,\dots, n$ ensures that the image of
$c$ in $H^1(\BB_{\st,E}\otimes_E V)$ is zero. By (\ref{it:coh-st-a})
we obtain $c=0$.
\end{proof}

\begin{rem}\label{rem:semi-st-ht-pos} If $V$ is semistable with
Hodge--Tate weights $\geq 0$, then the natural map
$$ H^1(V) \rightarrow H^1(\BX_\st(\BD_\st(V))) $$ is an isomorphism.
\end{rem}
\begin{proof} By (\ref{eq:XstXdR-exact}) we have a short exact sequence
\[ \xymatrix{ 0 \ar[r] & V \ar[r] & \BX_\st(\BD_\st(V)) \ar[r] & \BX_\dR(\BD_\st(V)) \ar[r] & 0
,}\] from which we obtain an exact sequence
\[\xymatrix{ H^0(\BX_\dR(\BD_\st(V))) \ar[r] & H^1(V) \ar[r] & H^1(\BX_\st(\BD_\st(V))) \ar[r] &
H^1(\BX_\dR(\BD_\st(V))). }\] As $V$ is of Hodge--Tate weight $\geq
0$, we have $H^0(\BX_\dR(\BD_\st(V)))=H^1(\BX_\dR(\BD_\st(V)))=0$.
\end{proof}

\section{Triangulations and refinements}\label{sec:tri-ref}

We recall the theory of triangulations and refinements \cite{BC,
Ben, Be2011, Cz2008}.

To explain what a triangulation is, we need the theory of
$(\varphi,\Gamma)$-modules. In $p$-adic Hodge theory one considers
$(\varphi,\Gamma)$-modules as the category of semilinear algebra
data describing $p$-adic Galois representations. The
$(\varphi,\Gamma)$-modules are modules over various rings of power
series (denoted by $\mathscr{E}$, $\mathscr{E}^\dagger$ and $\CR$).
See \cite{Fontaine1990, CC, Ked05} for precise constructions of
these rings and definitions of $(\varphi,\Gamma)$-modules.

\begin{thm} $($\cite{Fontaine1990, CC, Ked05}$)$ There is an equivalence of categories between the
category of $p$-adic representations of $G_{\BQ_p}$ and the category
of \'etale $(\varphi,\Gamma)$-modules over either $\mathscr{E}$,
$\mathscr{E}^\dagger$ or $\mathscr{R}$.
\end{thm}

In \cite{BeCo} Berger and Colmez defined a functor from the category
of families of $p$-adic Galois representations to the category of
families of overconvergent \'etale $(\varphi,\Gamma)$-modules. After
it, in \cite{KL} Kedlaya and Liu defined a functor $\BD_\rig$ from
the category of families of $p$-adic Galois representations to the
category of families of \'etale $(\varphi,\Gamma)$-modules over the
Robba ring $\CR$. But neither of these two functors is an
equivalence of categories, in contrast with the classical case.

For the convenience of the reader we recall the definitions of
families of Galois representations and families of \'etale
$(\varphi,\Gamma)$-modules.

If $S$ is an affinoid $E$-algebra, by an {\it $S$-representation} of
$G_{\BQ_p}$ we mean a locally free $S$-module of finite constant
rank equipped with a continuous $S$-linear action of $G_{\BQ_p}$.
Let $\CR_S$ be the Robba ring over $S$ which is a topological ring
equipped with continuous actions of $\varphi$ and $\Gamma$
\cite{KL}. By a {\it $($locally$)$ free $(\varphi,\Gamma)$-module}
over $\CR_S$ we mean a (locally) free $\CR_S$-module $\CM$ of finite
constant rank equipped with a semilinear action of $\varphi$ such
that the map $\varphi^*\CM\rightarrow \CM$ is an isomorphism, and a
semilinear action of $\Gamma$ that commutes with the
$\varphi$-action and is continuous for the profinite topology on
$\Gamma$ and the topology on $\CR_S$. See \cite[Definition 6.3]{KL}
for the notion of \'etale $(\varphi,\Gamma)$-modules over $\CR_S$.
We always consider an $S$-representation of $G_{\BQ_p}$ as a family
of $E$-representations of $G_{\BQ_p}$ over $\Max(S)$, and consider
an \'etale $(\varphi,\Gamma)$-module over $\CR_S$ as a family of
\'etale $(\varphi,\Gamma)$-modules over $\Max(S)$.

We recall the construction of $(\varphi,\Gamma)$-modules over
$\CR_S$ of rank $1$, which play the important role in the definition
of triangulations below.

If $\delta$ is a continuous $S^\times$-valued character of
$\BQ_p^\times$, we let $\CR_S(\delta)$ denote the rank one
$(\varphi,\Gamma)$-module over $\CR_S$, defined by
$\CR_S(\delta)=\CR_S e$ with $\gamma(e)=\delta(\chi_\cyc(\gamma))e$
and $\varphi(e)=\delta(p)e$. By \cite[Appendix]{KPX} every
$(\varphi,\Gamma)$-module over $\CR_S$ of rank $1$ is of this form.
Let $\log (\delta|_{\BZ_p^\times})$ be the logarithm of
$\delta|_{\BZ_p^\times}$, which is an additive character of
$\BZ_p^\times$ with values in $S$. There exists $w_\delta\in S$ such
that $\log (\delta|_{\BZ_p^\times}) = w_\delta
\psi_2|_{\BZ_p^\times}$. We call $w_\delta$ the {\it weight
$($function$)$} of $\delta$. For every $z\in \Max(S)$, if
$\CR_S(\delta_z)$ corresponds to a semistable $E_z$-representation
$V_z$ of $G_{\BQ_p}$, then the Hodge--Tate weight of $V_z$ is
$-w_\delta(z)$.

\begin{defn} \label{def:tri} $($\cite{KPX}$)$
Let $\CM$ be a free $(\varphi,\Gamma)$-module over $\CR_S$ of rank
$n$. If there are
\begin{quote}
$\bullet$ a strictly increasing filtration $$ \{0\}= \Fil_0 D
\subset \Fil_1 D \subset\cdots \subset \Fil_n D =D $$ of saturated
free $\CR_S$-submodule stable by $\varphi$ and $\Gamma$, and

$\bullet$ $n$ continuous characters $\delta_i:
\BQ_p^\times\rightarrow S^\times$ 

\end{quote}
\noindent such that for every $i=1,\dots, n$,
$$\Fil_i\CM/\Fil_{i-1}\CM\simeq \CR_S(\delta_i),$$ 
we say that $\CM$ is {\it triangulable};  we call $\Fil$ a {\it
triangulation} of $\CM$ and
$$(\delta_1,\dots, \delta_n)$$
the {\it triangulation parameters} attached to $\Fil$.
\end{defn}

To discuss the relation between triangulations and refinements, we
restrict ourselves to the case of $S=E$.

Let $\CD$ be a filtered $E$-$(\varphi,N)$-module of rank $n$, and we
assume that all the eigenvalues of $\varphi:\CD\rightarrow \CD$ are
in $E$. Following Mazur \cite{Mazur} we define a {\it refinement} of
$\CD$ to be a filtration on $\CD$
$$ 0 = \CF_0 \CD \subset \CF_1 \CD \subset \cdots \subset \CF_n\CD=\CD $$ by
$E$-subspaces stable by $\varphi$ and $N$, such that each factor
$\mathrm{gr}^\CF_i \CD= \CF_i\CD/\CF_{i-1}\CD$ ($i=1,\dots, n$) is
of dimension $1$. Every refinement fixes an ordering
$\alpha_1,\dots, \alpha_n$ of eigenvalues of $\varphi$ and an
ordering $k_1,\dots, k_n$ of Hodge--Tate weights of $\CD$ taken with
multiplicities such that the eigenvalue of $\varphi$ on
$\mathrm{gr}^\CF_i \CD$ is $\alpha_i$ and the Hodge--Tate weight of
$\mathrm{gr}^\CF_i\CD$ is $k_i$.

\begin{prop} \label{prop:triang-refine} $($\cite[Proposition 1.3.2]{Ben}$)$
Let $\CM$ be a $(\varphi,\Gamma)$-module over $\CR_E$ coming from a
filtered $E$-$(\varphi,N)$-module $\CD$ of dimension $n$ via
Berger's functor \cite{Berger-com}.
\begin{enumerate}
\item The equivalence of categories between the category of semistable
$(\varphi,\Gamma)$-modules and the category of filtered
$E$-$(\varphi,N)$-modules induces a bijection between the set of
triangulations on $\CM$ and the set of refinements on $\CD$.
\item If $(\Fil_i\CM)$ is a triangulation of $\CM$ corresponding to a
refinement $(\CF_i\CD)$ of $\CD$ with the ordering of the
eigenvalues of $\varphi$ being $\alpha_1, \dots, \alpha_n$ and the
ordering of Hodge--Tate weights being $k_1, \dots, k_n$, then for
each $i=1,\dots, n$, $\Fil_i\CM/\Fil_{i-1}\CM$ is isomorphic to
$\CR_E(\delta_i)$ where $\delta_i$ is defined by
$\delta_i(p)=\alpha_ip^{-k_i}$ and $\delta_i(u)=u^{-k_i}$ $(u\in
\BZ_p^\times)$.
\end{enumerate}
\end{prop}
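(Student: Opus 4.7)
The plan is to transport the notions of triangulation and refinement across Berger's equivalence~\cite{Berger-com} between the category of semistable $(\varphi,\Gamma)$-modules over $\CR_E$ and the category of filtered $E$-$(\varphi,N)$-modules, and then to read off $\delta_i$ by a rank-$1$ computation.

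For part (a), I would first verify that a saturated sub-$(\varphi,\Gamma)$-module $\CN$ of a semistable $(\varphi,\Gamma)$-module $\CM$ is again semistable, and similarly for the quotient $\CM/\CN$: this is the usual dimension-count $\dim_E\BD_\st(\CN)\leq \mathrm{rank}_{\CR_E}\CN$ combined with the analogous inequality for $\CM/\CN$ and the known equality $\dim_E\BD_\st(\CM)=\mathrm{rank}_{\CR_E}\CM$, which forces both inequalities to be equalities. Once this is in hand, Berger's equivalence, being exact on semistable objects, carries a triangulation $\Fil_\bullet\CM$ by saturated $(\varphi,\Gamma)$-submodules to a flag $\CF_\bullet\CD$ of sub-$(\varphi,N)$-modules of $\CD$ (equipped with the induced filtration) whose successive quotients remain of dimension one; conversely, any refinement of $\CD$ transports back to a triangulation of $\CM$, yielding the claimed bijection.

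For part (b), once $\Fil_i\CM/\Fil_{i-1}\CM$ is matched with $\mathrm{gr}^\CF_i\CD$, I would explicitly compute $\delta_i$. The rank-$1$ filtered $E$-$(\varphi,N)$-module $\mathrm{gr}^\CF_i\CD=E\cdot e$ has $\varphi(e)=\alpha_i e$, Hodge filtration $\Fil^{k_i}=Ee,\ \Fil^{k_i+1}=0$, and $N=0$ (forced in rank $1$ since $N\varphi=p\varphi N$ and $\alpha_i\neq 0$). By Berger's explicit description of his functor on rank-$1$ objects, the associated $(\varphi,\Gamma)$-module is $\CR_E\cdot e'$ with $e'=t_\cyc^{-k_i}\otimes e$. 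Using $\varphi(t_\cyc)=pt_\cyc$ and $\gamma(t_\cyc)=\chi_\cyc(\gamma)t_\cyc$ one computes directly
\[
\varphi(e')=\alpha_i p^{-k_i}\,e',\qquad \gamma(e')=\chi_\cyc(\gamma)^{-k_i}\,e',
\]
which, read against the defining relations of $\CR_E(\delta)$, gives $\delta_i(p)=\alpha_i p^{-k_i}$ and $\delta_i(u)=u^{-k_i}$ for $u\in\BZ_p^\times$, as asserted.

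The main obstacle is the semistability-preservation step in part (a): without knowing that saturated sub-$(\varphi,\Gamma)$-modules of $\CM$ stay in the essential image of Berger's functor, one cannot apply the equivalence termwise to the flag $\Fil_\bullet\CM$. Once that admissibility/slope bookkeeping is granted, the remainder is formal—part (a) is a termwise application of the equivalence, and part (b) collapses to the explicit rank-$1$ calculation above.
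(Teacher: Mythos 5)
The paper does not give its own proof of this proposition; it simply cites Benois's paper \cite{Ben}, so there is no in-paper argument against which to compare. That said, your proposal is correct and is the standard route to this result. The only point worth sharpening: you rely on the left exactness of $\BD_\st$ on $(\varphi,\Gamma)$-modules over $\CR_E$ together with the general bound $\dim_E\BD_\st(\CN)\leq\mathrm{rank}_{\CR_E}\CN$ to get semistability of $\CN$ and $\CM/\CN$ by the squeeze $\dim_E\BD_\st(\CM)\leq\dim_E\BD_\st(\CN)+\dim_E\BD_\st(\CM/\CN)\leq\mathrm{rank}\,\CN+\mathrm{rank}(\CM/\CN)=\mathrm{rank}\,\CM$; this is fine but should be stated explicitly, since it is exactly the place where saturatedness of $\CN$ is used (without saturation $\CM/\CN$ is not a $(\varphi,\Gamma)$-module and the count does not apply). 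Conversely, for the inverse direction you should note that Berger's functor is exact on the full (not only admissible) category of filtered $E$-$(\varphi,N)$-modules and sends sub-$(\varphi,N)$-modules with the induced filtration to saturated sub-$(\varphi,\Gamma)$-modules, which is what makes the termwise transport well defined. Your rank-one computation for part (b) is correct under the paper's convention that the Hodge weight $k$ of a rank-one piece means the filtration jumps at $k$ ($\Fil^{k}=E e$, $\Fil^{k+1}=0$), so that the generator of Berger's module is $t_\cyc^{-k}\otimes e$ and $\varphi(t_\cyc^{-k}\otimes e)=\alpha p^{-k}(t_\cyc^{-k}\otimes e)$, $\gamma(t_\cyc^{-k}\otimes e)=\chi_\cyc(\gamma)^{-k}(t_\cyc^{-k}\otimes e)$, yielding $\delta(p)=\alpha p^{-k}$ and $\delta|_{\BZ_p^\times}(u)=u^{-k}$ as claimed.
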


\begin{rem} In \cite{Xie2012} the author gave a family version of
Berger's functor from the category of filtered $(\varphi,N)$-modules
to the category of $(\varphi,\Gamma)$-modules \cite{Berger-com}.
Using this functor we may obtain a family version of Proposition
\ref{prop:triang-refine}. We omit the details since we will not use
it.
\end{rem}

\section{Marked indices and  $\CL$-invariants} \label{sec:L-inv}

Let $D$ be a filtered $E$-$(\varphi,N)$-module of rank $n$. Suppose
that $\varphi$ is semisimple on $D$. Fix a refinement $\CF$ of $D$.
Then $\CF$ fixes an ordering $\alpha_1, \dots, \alpha_n$ of the
eigenvalues of $\varphi$ and an ordering $k_1, \dots, k_n$ of the
Hodge--Tate weights.

\subsection{The operator $N_{\CF}$ and marked indices}

We define an $E$-linear operator $N_\CF$ on $\gr^{\CF}_\bullet
D=\bigoplus\limits_{i=1}^n \CF_{i}D/\CF_{i-1}D$. For every $i\in
\{1, \dots, n\}$, if $N(\CF_i D)=N (\CF_{i-1}D)$, we demand that
$N_\CF$ maps $\gr^\CF_i D$ to zero.

Now we assume that $N(\CF_i D) \supsetneq N (\CF_{i-1} D)$. Let $j$
be the minimal integer such that $$ N (\CF_i D) \subseteq
N(\CF_{i-1}D) + \CF_j D.
$$

\begin{lem}\label{lem:Ncf} We have $N (\CF_{i-1}D) \cap \CF_j D = N (\CF_{i-1}D)\cap
\CF_{j-1}D$.
\end{lem}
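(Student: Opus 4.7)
The plan is to prove the lemma by contradiction, exploiting the minimality of $j$ in the definition together with the fact that each graded piece $\CF_j D/\CF_{j-1}D$ is one-dimensional.

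First, I would observe that the inclusion $N(\CF_{i-1}D)\cap \CF_{j-1}D \subseteq N(\CF_{i-1}D)\cap \CF_j D$ is immediate from $\CF_{j-1}D \subseteq \CF_j D$, so only the reverse inclusion requires work. Suppose for contradiction that this reverse inclusion fails, that is, there exists some element of $N(\CF_{i-1}D)\cap \CF_j D$ that is not in $\CF_{j-1}D$. Then the image of $N(\CF_{i-1}D)\cap \CF_j D$ under the projection $\CF_j D \twoheadrightarrow \CF_j D/\CF_{j-1}D$ is nonzero.

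Next, since $\CF_j D/\CF_{j-1}D$ is one-dimensional over $E$, this nonzero image must coincide with the full quotient $\CF_j D/\CF_{j-1}D$. Consequently
$$ \CF_j D = (N(\CF_{i-1}D)\cap \CF_j D) + \CF_{j-1}D \subseteq N(\CF_{i-1}D) + \CF_{j-1}D. $$

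Finally, I would combine this with the defining inclusion $N(\CF_i D)\subseteq N(\CF_{i-1}D) + \CF_j D$. Substituting the containment just obtained for $\CF_j D$ yields
$$ N(\CF_i D)\subseteq N(\CF_{i-1}D) + N(\CF_{i-1}D) + \CF_{j-1}D = N(\CF_{i-1}D) + \CF_{j-1}D, $$
which contradicts the minimality of $j$. Hence the reverse inclusion holds, completing the proof.

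The argument is essentially formal and does not require the semisimplicity of $\varphi$ or anything about the refinement structure beyond the one-dimensionality of the graded pieces; the main (very mild) obstacle is simply to notice that the minimality of $j$ can be leveraged by pushing the intersection into the one-dimensional graded quotient.
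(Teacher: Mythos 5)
Your proof is correct and follows essentially the same route as the paper: proof by contradiction, using one-dimensionality of $\CF_j D/\CF_{j-1}D$ to show $\CF_j D\subseteq N(\CF_{i-1}D)+\CF_{j-1}D$, then contradicting the minimality of $j$. The paper phrases the middle step by picking an explicit $x\in\CF_{i-1}D$ with $N(x)\notin\CF_{j-1}D$ and writing $\CF_jD=E\cdot N(x)\oplus\CF_{j-1}D$, while you pass to the quotient, but the content is identical.
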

\begin{proof} If $N (\CF_{i-1}D) \cap \CF_j D \supsetneq N (\CF_{i-1}D)\cap
\CF_{j-1}D$, then there exists $x\in \CF_{i-1}D$ such that $N (x)$
is in $\CF_jD$ but not in $\CF_{j-1}D$. Then $\CF_{j}D = E \cdot
N(x) \oplus \CF_{j-1}D$. Thus for every $y\in \CF_i D$ we have
$$N(y)\subseteq N (\CF_{i-1}D) + \CF_j D \subseteq N(\CF_{i-1}D) + E
\cdot N(x) + \CF_{j-1}D \subseteq N(\CF_{i-1}D) +
 \CF_{j-1}D.$$ So $N (\CF_i D) \subseteq N(\CF_{i-1}D) + \CF_{j-1}
 D$ which contradicts the minimality of $j$.
\end{proof}

For every $x\in \CF_i D$, if we write $N(x)$ in the form $N(x)= a +
z$ with $a\in N (\CF_{i-1}D)$ and $z\in \CF_j D$, then $z\
\mathrm{mod} \ \CF_{j-1} D$ is uniquely determined. Indeed, if
$N(x)=a'+z'$ is another expression with $a'\in N(\CF_{i-1}D)$ and
$z'\in \CF_j D$, then by Lemma \ref{lem:Ncf} we have
$$ z-z' = a'-a \in N(\CF_{i-1}D) \cap \CF_jD = N(\CF_{i-1}D) \cap
\CF_{j-1}D \subseteq \CF_{j-1}D . $$ We define $$ N_\CF ( x +
\CF_{i-1}D) = z + \CF_{j-1}D \in \gr^{\CF}_j D. $$ It is easy to
check that $$ N_{\CF}(\lambda(x+ \CF_{i-1}D))= \lambda
N_\CF(x+\CF_{j-1}D), \ \ \lambda\in E.
$$

Finally we extend $N_\CF$ to the whole $\gr^\CF_\bullet D$ by
$E$-linearity. By definition we have either $N(\gr^\CF_i D) = 0$ or
$N(\gr^\CF_i D)= \gr^\CF_j D$ for some $j$.

\begin{defn} For $j\in \{1,\dots, n-1\}$ we say that $j$ is {\it marked} (or a {\it marked index}) for $\CF$
if there is some $i\in \{2, \dots, n\}$ such that $N_\CF(\gr^\CF_i
D)= \gr^\CF_j D$.
\end{defn}

Note that $i$ and $j$ in the above definition are determined by each
other. We write $i=t_\CF(j)$ and $j=s_\CF(i)$.

\begin{rem} We can construct an oriented graph whose vertices are the numbers $1,\dots,
n$; there is an (oriented) edge with source $j$ and terminate $i$ if
and only if $j$ is marked and $i=t_\CF(j)$. The resulting graph
consists of simple vertices and disjointed chains.
\end{rem}

\begin{lem}\label{rem:critical} The following are
equivalent:
\begin{enumerate}
\item\label{it:critical-a} $s$ is marked and $t=t_\CF(s)$.
\item\label{it:critical-b} $N\CF_{t-1}D \cap \CF_s D = N\CF_{t-1}D \cap \CF_{s-1} D$ and
$N\CF_{t}D \cap \CF_s D \supsetneq N\CF_{t}D \cap \CF_{s-1} D$.
\end{enumerate}
\end{lem}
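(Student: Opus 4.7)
The plan is to unwind the definition of $N_\CF$ in both directions and note that the data $(t,s)$ with $t = t_\CF(s)$ is characterized by the flag positions of $N(\CF_t D)$ modulo $N(\CF_{t-1}D)$, which is precisely what condition (b) records.

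For the direction (\ref{it:critical-a}) $\Rightarrow$ (\ref{it:critical-b}), I would argue as follows. Assume $s$ is critical with $t = t_\CF(s)$. By the very definition of $t_\CF$, $s$ is the minimal integer with $N(\CF_t D) \subseteq N(\CF_{t-1}D) + \CF_s D$ and $N(\CF_t D) \supsetneq N(\CF_{t-1}D)$. The first equality in (\ref{it:critical-b}) is then exactly Lemma \ref{lem:Ncf} applied to $i = t$, $j = s$. For the strict inclusion in (\ref{it:critical-b}), minimality of $s$ gives some $x \in \CF_t D$ with $N(x) \notin N(\CF_{t-1}D) + \CF_{s-1}D$; writing $N(x) = a + z$ with $a \in N(\CF_{t-1}D)$ and $z \in \CF_s D$, the element $z = N(x) - a \in N\CF_t D$ lies in $\CF_s D$ but not in $\CF_{s-1}D$, hence is in $N\CF_t D \cap \CF_s D$ but not in $N\CF_t D \cap \CF_{s-1}D$.

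For the converse (\ref{it:critical-b}) $\Rightarrow$ (\ref{it:critical-a}), I would start by picking $y = N(x) \in N\CF_t D \cap \CF_s D$ with $y \notin \CF_{s-1}D$ (which exists by the second condition in (\ref{it:critical-b})). If $x$ were in $\CF_{t-1}D$, then $y \in N\CF_{t-1}D \cap \CF_s D$, which by the first condition equals $N\CF_{t-1}D \cap \CF_{s-1}D \subseteq \CF_{s-1}D$, contradicting $y \notin \CF_{s-1}D$. So $x \notin \CF_{t-1}D$, and since $\gr^\CF_t D$ is one-dimensional we have $\CF_t D = \CF_{t-1}D + Ex$, whence
$$ N\CF_t D = N\CF_{t-1}D + Ey \subseteq N\CF_{t-1}D + \CF_s D. $$
Moreover the same reasoning shows $y \notin N\CF_{t-1}D$, so $N\CF_t D \supsetneq N\CF_{t-1}D$ and the $N_\CF$-image of $\gr^\CF_t D$ is nonzero. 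It remains to identify the minimal $j^*$ with $N\CF_t D \subseteq N\CF_{t-1}D + \CF_{j^*}D$ as $s$. The inclusion just derived gives $j^* \leq s$. For the reverse inequality, if $j^* < s$ then $y = a + z$ with $a \in N\CF_{t-1}D$ and $z \in \CF_{s-1}D$; then $a = y - z \in N\CF_{t-1}D \cap \CF_s D$, which by the first condition lies in $\CF_{s-1}D$, forcing $y = a + z \in \CF_{s-1}D$, a contradiction. Hence $j^* = s$, i.e.\ $t_\CF(s) = t$.

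I do not expect any serious obstacle: both implications reduce to a careful bookkeeping of where $N(x)$ lands in the flag, and the key identity $\CF_t D = \CF_{t-1}D + Ex$ (which uses that the graded pieces are one-dimensional) is what makes the equivalence clean. The only point that requires a moment of care is verifying in the converse direction that $j^*$ cannot be strictly less than $s$, for which the first half of (\ref{it:critical-b}) is used in a decisive way.
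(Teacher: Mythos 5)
Your proof is correct and follows essentially the same route as the paper: the (\ref{it:critical-a})\,$\Rightarrow$\,(\ref{it:critical-b}) direction is the combination of Lemma~\ref{lem:Ncf} with the minimality of $s$, and the converse hinges, just as in the paper, on using $\dim_E\gr^\CF_tD=1$ to write $\CF_tD=\CF_{t-1}D+Ex$ and then deriving a contradiction from the first half of (\ref{it:critical-b}). Your minimality argument reaches the contradiction ($y\in\CF_{s-1}D$) a bit more directly than the paper's, which instead shows the second half of (\ref{it:critical-b}) would fail, but the underlying idea is identical.
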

\begin{proof} We have already seen that, if (\ref{it:critical-a}) holds, then (\ref{it:critical-b}) holds.
Conversely, we assume that (\ref{it:critical-b}) holds. Then
$N\CF_{t}D \cap \CF_s D \supsetneq N \CF_{t-1}D\cap \CF_sD$. Thus
$N\CF_{t}D \supsetneq N \CF_{t-1}D$. From $N\CF_{t}D \cap \CF_s D
\supsetneq N \CF_{t-1}D\cap \CF_sD$ we see that there is $x\in
\CF_tD \backslash \CF_{t-1}D$ such that $N(x)\in \CF_sD$. Thus $N
\CF_t D \subseteq N \CF_{t-1}D + \CF_sD$. Next we show that $N\CF_tD
\varsubsetneq N \CF_{t-1}D +\CF_{s-1}D$. If it is not true, then
there exists $y\in \CF_tD \backslash \CF_{t-1}D$ such that $N(y)\in
\CF_{s-1}D$. For every $z\in \CF_tD$, write $z=w+\lambda y$ with
$w\in \CF_{t-1}D$ and $\lambda\in E$. If $N(z)$ is in $\CF_s D$,
then $N(w)$ is also in $\CF_s D$. But $N\CF_{t-1}D\cap \CF_s D
=N\CF_{t-1}D\cap \CF_{s-1}D$. Thus $N(w)$ is in $\CF_{s-1}D$, which
implies that $N(z)=N(w)+\lambda N(y)$ is also in $\CF_{s-1}D$. So,
$N\CF_tD\cap \CF_s D=N\CF_tD\cap \CF_{s-1}D$, a contradiction.
\end{proof}

\begin{defn} We say that an ordered basis $S=\{ e_1,\dots, e_n\}$ of $D$ is
{\it compatible} with $\CF$ if $\CF_r D =\oplus_{i=1}^r Ee_i $ for
all $r \in \{1, \dots, n\} $. If $S=\{e_1, \dots, e_n\}$ is an
ordered basis compatible with $\CF$ and $\varphi(e_i)=\alpha_i e_i$
for every $i\in\{1,\dots, n\}$, we say that $S$ is {\it perfect} for
$\CF$.
\end{defn}

As $\varphi$ is semisimple on $D$, there always exists a perfect
ordered basis for $\CF$.

\begin{lem}\label{lem:perfectbasis}
\begin{enumerate}
\item \label{it:perfectbasis-a} If $s$ is marked for $\CF$ and $t=t_\CF(s)$, then there exists
$e_t\in \CF_tD \backslash \CF_{t-1}D$ such that
$\varphi(e_t)=\alpha_t e_t$ and $N(e_t)\in \CF_{s}D \backslash
\CF_{s-1}D$.
\item \label{it:perfectbasis-b} If $t$ is not $t_\CF(s)$ for any $s$, then there
exists $e_t\in \CF_tD\backslash \CF_{t-1}D$ such that
$\varphi(e_t)=\alpha_t e_t$ and $N(e_t)=0$.
\end{enumerate}
\end{lem}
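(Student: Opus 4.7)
The plan is to combine the hypothesis that $\varphi$ is semisimple on $D$ with the commutation relation $N\varphi = p\varphi N$. Writing $D=\bigoplus_\alpha D_\alpha$ for the decomposition into $\varphi$-eigenspaces, the relation $N\varphi = p\varphi N$ forces $N(D_\alpha)\subseteq D_{\alpha/p}$. Moreover, each $\CF_i D$ is $\varphi$-stable and $\varphi$ is semisimple on it, so $\CF_i D=\bigoplus_\alpha(\CF_i D\cap D_\alpha)$; in particular the $\alpha$-component of any element of $\CF_i D$ again lies in $\CF_i D$. As a first step I would produce a preliminary vector $e_t^{(0)}\in \CF_t D\cap D_{\alpha_t}$ lying outside $\CF_{t-1}D$: such a vector exists because $\gr^\CF_t D$ is one-dimensional with $\varphi$ acting by the scalar $\alpha_t$, so the $\alpha_t$-eigenspace of $\CF_t D$ cannot be entirely contained in $\CF_{t-1}D$. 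The remaining task in both parts is to adjust $e_t^{(0)}$ by an element of $\CF_{t-1}D\cap D_{\alpha_t}$ in order to pin down its image under $N$.

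For part (b), the hypothesis that $t$ is not $t_\CF(s)$ for any $s$ translates (by the construction of $N_\CF$ recalled earlier in this section) into the equality $N(\CF_t D)=N(\CF_{t-1}D)$. Hence $N(e_t^{(0)})=N(w)$ for some $w\in \CF_{t-1}D$. Decomposing $w=\sum_\alpha w_\alpha$ with $w_\alpha\in\CF_{t-1}D\cap D_\alpha$, the inclusion $N(D_\alpha)\subseteq D_{\alpha/p}$ shows that $N(w)=\sum_\alpha N(w_\alpha)$ is a decomposition into eigenspaces. Since $N(e_t^{(0)})$ lies entirely in $D_{\alpha_t/p}$, projecting to that single eigenspace yields $N(e_t^{(0)})=N(w_{\alpha_t})$, and then $e_t:=e_t^{(0)}-w_{\alpha_t}$ has all the desired properties.

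For part (a), the definition of $t=t_\CF(s)$ gives $N(e_t^{(0)})\in N(\CF_{t-1}D)+\CF_s D$, so I would write $N(e_t^{(0)})=N(w)+z$ with $w\in\CF_{t-1}D$ and $z\in\CF_s D$, and apply the same eigenspace projection to both sides (both subspaces being $\varphi$-stable). This produces $w_{\alpha_t}\in\CF_{t-1}D\cap D_{\alpha_t}$ and $z_{\alpha_t/p}\in\CF_s D\cap D_{\alpha_t/p}$ with $N(e_t^{(0)})=N(w_{\alpha_t})+z_{\alpha_t/p}$. Setting $e_t:=e_t^{(0)}-w_{\alpha_t}$ gives $\varphi(e_t)=\alpha_t e_t$ and $N(e_t)=z_{\alpha_t/p}\in \CF_s D$. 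The remaining requirement $N(e_t)\notin\CF_{s-1}D$ is automatic: if it failed, the identity $N(e_t^{(0)})=N(w_{\alpha_t})+N(e_t)$ would exhibit $N(e_t^{(0)})$ as an element of $N(\CF_{t-1}D)+\CF_{s-1}D$, forcing $N_\CF(e_t^{(0)}+\CF_{t-1}D)=0$ by the well-definedness argument for $N_\CF$; this contradicts the surjectivity of $N_\CF\colon \gr^\CF_t D\to\gr^\CF_s D$ together with the fact that $e_t^{(0)}+\CF_{t-1}D$ generates the one-dimensional space $\gr^\CF_t D$.

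I do not anticipate any serious obstacle; the only delicacy is the eigenspace bookkeeping, i.e.\ ensuring that the $\alpha_t$-component of an element of $\CF_{t-1}D$ (resp.\ the $\alpha_t/p$-component of an element of $\CF_s D$) really sits inside $\CF_{t-1}D$ (resp.\ $\CF_s D$). This follows at once from $\varphi$-stability of these subspaces together with semisimplicity of $\varphi$, which is exactly where the paper's standing assumption that $\varphi$ is semisimple on $D$ becomes essential.
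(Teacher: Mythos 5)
Your proof is correct and takes essentially the same route as the paper's: both exploit semisimplicity of $\varphi$ to decompose each $\CF_i D$ into $\varphi$-eigenspaces and use $N\varphi=p\varphi N$ so that projecting onto the $\alpha_t$- (resp.\ $\alpha_t/p$-) eigenspace stays inside the relevant filtration step. The paper's own argument is shorter in presentation---it fixes a perfect basis $\{e'_i\}$, takes the witness $x\in\CF_t D\setminus\CF_{t-1}D$ with $N(x)\in\CF_s D\setminus\CF_{s-1}D$ provided by criticality, and sets $e_t:=\sum_{i\le t,\,\alpha_i=\alpha_t}\lambda_i e'_i$, i.e.\ the $\alpha_t$-eigencomponent of $x$---whereas you begin with a generic $\alpha_t$-eigenvector $e_t^{(0)}$ and correct its image under $N$; these are just two bookkeepings of the same eigenspace projection.
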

\begin{proof} Let $\{e'_1, \dots, e'_n\}$ be a perfect basis for
$\CF$.

If $t=t_\CF(s)$, then there exists $x\in \CF_t D \backslash
\CF_{t-1}D$ such that $N(x)\in \CF_sD \backslash \CF_{s-1}D$. Write
$x=\sum_{i=1}^t \lambda_i e'_i$ and put $e_t=\sum_{1\leq i\leq t:
\alpha_i=\alpha_t} \lambda_i e'_i.$ Then $\varphi(e_t)=\alpha_t e_t$
and $N(e_t) \in \CF_{s}D \backslash \CF_{s-1}D$. This proves
(\ref{it:perfectbasis-a}). The proof of (\ref{it:perfectbasis-b}) is
similar.
\end{proof}

\subsection{Strongly marked indices and
$\CL$-invariants}

Assume that $s$ is marked for $\CF$ and $t=t_\CF(s)$. We consider
the decompositions
$$ \CF_t D / \CF_{s-1} D = E \bar{e}_s \oplus L \oplus E \bar{e}_t
$$
that satisfy the following conditions:

$\bullet$ $\overline{\CF}_1 (\CF_t D / \CF_{s-1} D)=E\bar{e}_s$ and
$\overline{\CF}_{t-s}(\CF_tD/\CF_{s-1} D) = E \bar{e}_s\oplus L$,
where $\overline{\CF}$ is the refinement on $\CF_tD/\CF_{s-1} D$
induced by $\CF$.

$\bullet$ Both $L$ and $E \bar{e}_s \oplus E \bar{e}_t$ are stable
by $\varphi$ and $N$; $\varphi(\bar{e}_t)=\alpha_t \bar{e}_t$ and
$N(\bar{e}_t)=\bar{e}_s$.

\noindent Such a decomposition is called an {\it $s$-decomposition.}

\begin{lem} If $s$ is marked, then there exists at least
one $s$-decomposition.
\end{lem}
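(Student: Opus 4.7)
The plan is to build the decomposition in two stages: first produce the distinguished vectors $\bar e_s$ and $\bar e_t$ via Lemma~\ref{lem:perfectbasis}(a), then construct the complement $L$ of $E\bar e_s$ inside $\overline{\CF}_{t-s}\overline D$ by $\varphi$-eigenspace decomposition combined with a sharp application of Lemma~\ref{lem:Ncf}. Throughout I write $\overline D := \CF_tD/\CF_{s-1}D$, endowed with its induced refinement $\overline{\CF}_i\overline D = \CF_{s-1+i}D/\CF_{s-1}D$.

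For the first stage, Lemma~\ref{lem:perfectbasis}(a) applied to the critical pair $(s,t)$ supplies $e_t\in \CF_tD\setminus \CF_{t-1}D$ satisfying $\varphi(e_t)=\alpha_t e_t$ and $N(e_t)\in \CF_sD\setminus \CF_{s-1}D$. Set $\bar e_t:=e_t+\CF_{s-1}D$ and $\bar e_s:=N(\bar e_t)$. Then $\bar e_s$ is a nonzero element of the one-dimensional $\overline{\CF}_1\overline D$, so spans it; the relation $N\varphi=p\varphi N$ forces $\varphi(\bar e_s)=(\alpha_t/p)\bar e_s=\alpha_s\bar e_s$, and $N(\bar e_s)=N^2(\bar e_t)$ lies in $N(\overline{\CF}_1\overline D)\subseteq \overline{\CF}_0\overline D=0$. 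Hence $E\bar e_s\oplus E\bar e_t$ is a $(\varphi,N)$-stable plane exhibiting the required eigenvalue and monodromy behaviour, and $\bar e_s,\bar e_t$ are linearly independent because $e_t\notin \CF_{t-1}D$ puts $\bar e_t$ outside $\overline{\CF}_{t-s}\overline D$.

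For the second stage, since $\varphi$ is semisimple on $D$ it is semisimple on the subquotient $\overline{\CF}_{t-s}\overline D=\CF_{t-1}D/\CF_{s-1}D$, which I decompose into $\varphi$-eigenspaces $\bigoplus_\alpha V_\alpha$; here $\bar e_s\in V_{\alpha_s}$. The commutation $N\varphi=p\varphi N$ forces $N(V_\alpha)\subseteq V_{\alpha/p}$, so the only eigenspace whose $N$-image can land in $V_{\alpha_s}$ (and hence potentially project onto the $\bar e_s$-axis) is $V_{\alpha_t}$. I propose
\[
L\;:=\;\Bigl(\bigoplus_{\alpha\neq\alpha_s}V_\alpha\Bigr)\oplus L_{\alpha_s},
\]
where $L_{\alpha_s}\subseteq V_{\alpha_s}$ is a linear complement of $E\bar e_s$ chosen to contain $N(V_{\alpha_t})$. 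By construction such an $L$ is $\varphi$-stable and complements $E\bar e_s$ in $\overline{\CF}_{t-s}\overline D$; $N$-stability then reduces to the single constraint $N(V_{\alpha_t})\subseteq L_{\alpha_s}$, which is solvable exactly when $N(V_{\alpha_t})\cap E\bar e_s=0$.

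The main obstacle is establishing this last transversality, and this is precisely where Lemma~\ref{lem:Ncf} enters. Any element of $N(V_{\alpha_t})\cap E\bar e_s\subseteq \overline D$ lifts to some $N(x)$ with $x\in \CF_{t-1}D$ and $N(x)\in \CF_sD$; Lemma~\ref{lem:Ncf} gives $N(\CF_{t-1}D)\cap \CF_sD=N(\CF_{t-1}D)\cap \CF_{s-1}D$, forcing $N(x)\in \CF_{s-1}D$, and hence the class in $\overline D$ vanishes. With $L_{\alpha_s}$ so chosen, $L$ is genuinely $(\varphi,N)$-stable, and the two stages combine to give $\overline D=E\bar e_s\oplus L\oplus E\bar e_t$ with all the properties required of an $s$-decomposition.
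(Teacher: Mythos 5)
Your proof is correct and follows essentially the same route as the paper's: get $\bar e_t$ with $N(\bar e_t)=\bar e_s$ from Lemma~\ref{lem:perfectbasis}(\ref{it:perfectbasis-a}), decompose $\CF_{t-1}D/\CF_{s-1}D$ into $\varphi$-eigenspaces, and use $N\CF_{t-1}D\cap\CF_sD=N\CF_{t-1}D\cap\CF_{s-1}D$ to check that a codimension-one complement of $E\bar e_s$ in $V_{\alpha_s}$ containing $N(V_{\alpha_t})$ exists. The only cosmetic difference is that you invoke Lemma~\ref{lem:Ncf} by name while the paper quotes the same intersection identity directly, and you spell out the relation $\alpha_t=p\alpha_s$ that makes $V_{\alpha_t}$ the sole eigenspace whose $N$-image can meet $V_{\alpha_s}$; the paper leaves this implicit.
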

\begin{proof} By Lemma \ref{lem:perfectbasis} there exists a perfect
basis $\{e_1, \dots, e_n\}$ for $\CF$ such that $N(e_t)=e_s$. For
$i=s, \dots, t$ let $\bar{e}_i$ denote the image of $e_i$ in
$\CF_tD/\CF_{s-1}D$. Then $N(\bar{e}_t)=\bar{e}_s$. Write
$\tilde{L}=\CF_{t-1}D/\CF_{s-1}D$. For every $\alpha\in E$ put
$\tilde{L}^{\alpha}=\{x\in \tilde{L}: \varphi(x)=\alpha x\}$. As $N
\CF_{t-1}D\cap \CF_{s}D=N \CF_{t-1}D\cap \CF_{s-1}D$, we have
$N\tilde{L}^{\alpha_t}\cap E \bar{e}_s=0$. Let $L^{\alpha_s}$ be any
$E$-subspace of $\tilde{L}^{\alpha_s}$ of codimension $1$ that
contains $N\tilde{L}^{\alpha_t}$ and does not contain $E \bar{e}_s$.
Put $L= (\bigoplus\limits_{\alpha\neq \alpha_s}\tilde{L}^{\alpha})
\bigoplus L^{\alpha_s}$. It is easy to verify that $L$ is stable by
$\varphi$ and $N$. Then $\CF_tD/\CF_{s-1}D=E\bar{e}_s \oplus L
\oplus E \bar{e}_t$ is an $s$-decomposition.
\end{proof}

Let $\mathrm{dec}$ denote an $s$-decomposition $ \CF_t D / \CF_{s-1}
D = E \bar{e}_s \oplus L \oplus E \bar{e}_t. $ There are three
possibilities for the filtration on the filtered
$E$-$(\varphi,N)$-submodule $E \bar{e}_s \oplus E \bar{e}_t$:

Case 1. There exist an integer $k'_t>k_s$ and some
$\CL_{\mathrm{dec}}\in E$ (which must be unique) such that
$$\Fil^{i} (E \bar{e}_s \oplus E \bar{e}_t) = \left\{ \begin{array}{ll} E \bar{e}_s\oplus E \bar{e}_t & \text{ if }i \leq k_s, \\
E(\bar{e}_t + \CL_{\mathrm{dec}} \bar{e}_s) & \text{ if } k_s < i\leq k'_t ,\\
0 & \text{ if } i > k'_t.
\end{array}\right.$$

Case 2. There exists an integer $k'_t<k_s$ such that
$$\Fil^{i} (E \bar{e}_s \oplus E \bar{e}_t) = \left\{ \begin{array}{ll} E \bar{e}_s \oplus E \bar{e}_t & \text{ if }i \leq k'_t, \\
E \bar{e}_s & \text{ if } k'_t< i\leq k_s ,\\
0 & \text{ if } i> k_s.
\end{array}\right.$$

Case 3. We have
$$\Fil^{i} (E \bar{e}_s \oplus E \bar{e}_t) = \left\{ \begin{array}{ll} E \bar{e}_s \oplus E \bar{e}_t  & \text{ if }i \leq k_s, \\
0 & \text{ if } i>k_s.
\end{array}\right.$$

Similarly, there are three possibilities for the filtration on the
quotient of $\CF_tD/\CF_{s-1}D$ by $L$. Below we will denote the
images of $\bar{e}_s$ and $\bar{e}_t$ in $(\CF_tD/\CF_{s-1}D)/L$ by
the original notations $\bar{e}_s$ and $\bar{e}_t$.

Case $1'$. There exist an integer $k'_s < k_t$ and some
$\CL'_{\mathrm{dec}}\in E$ (which must be unique) such that
$$\Fil^{i} (E \bar{e}_s \oplus E \bar{e}_t) = \left\{ \begin{array}{ll} E \bar{e}_s\oplus E \bar{e}_t & \text{ if }i \leq k'_s, \\
E(\bar{e}_t + \CL'_{\mathrm{dec}} \bar{e}_s) & \text{ if } k'_s < i\leq k_t ,\\
0 & \text{ if } i > k_t.
\end{array}\right.$$

Case $2'$. There exists an integer $k'_s>k_t$ such that
$$\Fil^{i} (E \bar{e}_s \oplus E \bar{e}_t) = \left\{ \begin{array}{ll} E \bar{e}_s \oplus E \bar{e}_t & \text{ if }i \leq k_t, \\
E \bar{e}_s & \text{ if } k_t< i\leq k'_s ,\\
0 & \text{ if } i> k'_s.
\end{array}\right.$$

Case $3'$. We have
$$\Fil^{i} (E \bar{e}_s \oplus E \bar{e}_t)= \left\{ \begin{array}{ll} E \bar{e}_s \oplus E \bar{e}_t & \text{ if }i \leq k_t, \\
0 & \text{ if } i>k_t.
\end{array}\right.$$

If Case 1 and Case $1'$ happen, we always have $k_s\leq k'_s$ and
$k'_t\leq k_t$. If further $k'_s < k'_t$ (which happens only when
$k_s<k_t$), we say that $\mathrm{dec}$ is a {\it perfect
$s$-decomposition} (for $\CF$). In this case we must have
$\CL_{\mathrm{dec}}=\CL'_{\mathrm{dec}}$.

\begin{defn}\label{defn:Fontaine-Mazur}
If there exists a perfect $s$-decomposition, we say that $s$ is {\it
strongly marked} (or a {\it strongly marked index}). In this case we
attached to $s$ an invariant $\CL_{\mathrm{dec}}$, where
$\mathrm{dec}$ is a perfect $s$-decomposition. Proposition
\ref{prop:fon-mazur-inv} below tells us that $\CL_{\mathrm{dec}}$ is
independent of the choice of perfect $s$-decompositions. We denote
it by $\CL_{\CF,s}$ and call it the {\it Fontaine--Mazur
$\CL$-invariant} associated to $(\CF,s)$.
\end{defn}

\begin{prop}
In the case of $t=s+1$, $s$ is strongly marked if and only if
$k_s<k_t$.
\end{prop}
\begin{proof}
We have already seen the necessity of $k_s<k_t$.

In the case of $t=s+1$, as $L=0$, both the filtered
$E$-$(\varphi,N)$-submodule $E\bar{e}_s\oplus E\bar{e}_t$ and the
quotient $(\CF_tD/\CF_{s-1}D)/L$ coincide with $\CF_tD/\CF_{s-1}D$.
Since the Hodge--Tate weight of $\mathrm{gr}^\CF_t D$ is $k_t$, and
since as a filtered $E$-$(\varphi,N)$-module $\mathrm{gr}^\CF_t D$
is the quotient of $\CF_tD/\CF_{s-1}D$ by $\mathrm{gr}^\CF_s D =
E\bar{e}_s$, there exists some $\CL\in E$ such that
$$\bar{e}_t+ \CL \bar{e}_s \in \Fil^{k_t} (\CF_tD/\CF_{s-1}D),\ \
\bar{e}_t+ \CL \bar{e}_s \notin \Fil^{k_t+1} (\CF_tD/\CF_{s-1}D).
$$ Since the Hodge--Tate weight of
$\mathrm{gr}^\CF_s D$ is $k_s$, we have
$$\bar{e}_s \in \Fil^{k_s} (\CF_tD/\CF_{s-1}D),  \ \ \bar{e}_s  \notin \Fil^{k_s+1}
(\CF_tD/\CF_{s-1}D). $$ Therefore, when $k_s<k_t$, Case $1$ and Case
$1'$ happen ($k'_t=k_t$ and $k'_s=k_s$).
\end{proof}

\begin{prop} \label{prop:fon-mazur-inv}
If $\mathrm{dec}_1$ and $\mathrm{dec}_2$ are two perfect
$s$-decompositions, then
$\CL_{\mathrm{dec}_1}=\CL_{\mathrm{dec}_2}$.
\end{prop}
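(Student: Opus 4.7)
The plan is to compare the two decompositions by projecting to their associated $2$-dimensional quotients and exploiting the symmetric structure of perfection. First I would normalize $\bar{e}_s$: it generates the canonical $1$-dimensional subspace $\overline{\CF}_1(W)$ where $W:=\CF_tD/\CF_{s-1}D$, so the two decompositions agree on $\bar{e}_s$ up to a scalar $c\in E^\times$. Rescaling $\bar{e}_s\mapsto c\bar{e}_s$ forces $\bar{e}_t\mapsto c\bar{e}_t$ via the relation $N\bar{e}_t=\bar{e}_s$, and the resulting rescaling of $\bar{e}_t+\CL_{\mathrm{dec}}\bar{e}_s$ by $c$ leaves $\CL_{\mathrm{dec}}$ unchanged; hence one may assume the two decompositions $\mathrm{dec}_i=(\bar{e}_s,L_i,\bar{e}_t^{(i)})$ share the same $\bar{e}_s$.

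Set $v:=\bar{e}_t^{(2)}-\bar{e}_t^{(1)}$. Both $\bar{e}_t^{(i)}$ are $\alpha_t$-eigenvectors of $\varphi$ with $N\bar{e}_t^{(i)}=\bar{e}_s$; since $N_\CF$ induces a nonzero map $\gr^\CF_tD\to\gr^\CF_sD=E\bar{e}_s$, the requirement $N\bar{e}_t^{(i)}=\bar{e}_s$ pins down the image of $\bar{e}_t^{(i)}$ in $\gr^\CF_tD=W/\overline{\CF}_{t-s}(W)$ uniquely. Thus $v\in\overline{\CF}_{t-s}(W)$ with $\varphi v=\alpha_t v$ and $Nv=0$. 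Writing $v$ relative to the $(\varphi,N)$-stable splittings $\overline{\CF}_{t-s}(W)=E\bar{e}_s\oplus L_j$ and using $\alpha_t=p\alpha_s\neq\alpha_s$ (forced by $N\varphi=p\varphi N$ applied to $\bar{e}_t$), the $E\bar{e}_s$-component of $v$ vanishes in each decomposition, so $v\in L_1\cap L_2$.

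Let $k'_{s,i}$ and $k'_{t,i}$ denote the intermediate Hodge weights of $\mathrm{dec}_i$, so perfection reads $k_s\leq k'_{s,i}<k'_{t,i}\leq k_t$. Case~1 of $\mathrm{dec}_i$ gives $y_i:=\bar{e}_t^{(i)}+\CL_i\bar{e}_s\in\Fil^{k'_{t,i}}W$, and Case~$1'$ of $\mathrm{dec}_i$ gives $\Fil^{k'_{s,i}+1}(W/L_i)=E(\bar{e}_t^{(i)}+\CL_i\bar{e}_s)$ as a $1$-dimensional subspace. Now project $y_1$ to $W/L_2$: since $v\in L_2$, its image is $\bar{e}_t^{(2)}+\CL_1\bar{e}_s$. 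If $k'_{t,1}>k'_{s,2}$, then this image lies in $\Fil^{k'_{t,1}}(W/L_2)\subseteq\Fil^{k'_{s,2}+1}(W/L_2)=E(\bar{e}_t^{(2)}+\CL_2\bar{e}_s)$, and matching the coefficient of $\bar{e}_t^{(2)}$ (which is $1$) forces the image to equal $\bar{e}_t^{(2)}+\CL_2\bar{e}_s$, yielding $\CL_1=\CL_2$.

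The remaining case $k'_{t,1}\leq k'_{s,2}$ is handled by the symmetric projection of $y_2$ to $W/L_1$, which analogously requires $k'_{t,2}>k'_{s,1}$. Simultaneous failure of both inequalities would produce the chain $k'_{s,1}<k'_{t,1}\leq k'_{s,2}<k'_{t,2}\leq k'_{s,1}$, which is absurd; hence at least one of the two projections succeeds and $\CL_1=\CL_2$. The step I expect to be the most delicate is the bookkeeping around Case~1 and Case~$1'$: carefully identifying the induced Hodge weights $k'_{s,i},k'_{t,i}$ under the perfection hypothesis, establishing $v\in L_1\cap L_2$ via the $\varphi$-eigenvalue analysis, and confirming that the Case~$1'$ description of $\Fil(W/L_i)$ with respect to the quotient filtration holds in the form needed so that the projection argument collapses cleanly as above.
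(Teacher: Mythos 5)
Your proof is correct and follows essentially the same route as the paper: normalize $\bar{e}_s$, use the $\varphi$-eigenvalue argument (with $\alpha_t=p\alpha_s\neq\alpha_s$) to place $v=\bar{e}_t^{(2)}-\bar{e}_t^{(1)}$ inside $L_1\cap L_2$, and then project the element $\bar{e}_t^{(i)}+\CL_{\mathrm{dec}_i}\bar{e}_s$ into a two-dimensional quotient to read off equality of the $\CL$'s from the one-dimensional filtration step. The only cosmetic difference is that the paper settles the needed strict inequality in one stroke by the WLOG $k_s^{(1)}\leq k_s^{(2)}$ and projecting to $W/L^{(1)}$, whereas you run a dichotomy on which of the two projections succeeds; both reduce to the same chain of inequalities $k'_s<k'_t$ from perfection.
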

\begin{proof} Without loss of generality we assume that $\bar{e}_s$ in the two perfect $s$-decompositions are same. Let $k^{(1)}_s$, $k^{(1)}_t$, $L^{(1)}$ and $\bar{e}_t^{(1)}$ (resp.
$k^{(2)}_s$, $k^{(2)}_t$, $L^{(2)}$ and $\bar{e}_t^{(2)}$) denote
$k'_s$, $k'_t$, $L$ and $\bar{e}_t$ for $\mathrm{dec}_1$ (resp.
$\mathrm{dec}_2$). We assume that $k^{(1)}_s\leq k^{(2)}_s$.

As $N(\bar{e}^{(2)}_t-\bar{e}^{(1)}_t)=0$, we have
$$\bar{e}^{(2)}_t-\bar{e}^{(1)}_t\in
(\CF_{t-1}D/\CF_{s-1}D)^{\varphi=\alpha_t} =
(L^{(1)})^{\varphi=\alpha_t}.$$ Thus $\bar{e}^{(2)}_t$ and
$\bar{e}^{(1)}_t$ have the same image in
$(\CF_tD/\CF_{s-1}D)/L^{(1)}$. We will denote the images of $e_s$,
$\bar{e}_t^{(1)}$ and $\bar{e}_t^{(2)}$ in
$(\CF_tD/\CF_{s-1}D)/L^{(1)}$ by the original notations.

As $\bar{e}^{(2)}_t + \CL_{\mathrm{dec}_2} \bar{e}_s$ is in
$\Fil^{k^{(2)}_t}(\CF_tD/\CF_{s-1}D)$, and as the Hodge--Tate
weights $k_s^{(1)}$ and $k_t$ of $(\CF_tD/\CF_{s-1}D)/L^{(1)}$
satisfy $k_s^{(1)} \leq k_s^{(2)} < k^{(2)}_t \leq k_t$, we have
\begin{eqnarray*}\Fil^{k_t}\Big((\CF_tD/\CF_{s-1}D)/L^{(1)}\Big) &=&
\Fil^{k^{(2)}_t}\Big((\CF_tD/\CF_{s-1}D)/L^{(1)}\Big) \\ &=& E
(\bar{e}^{(2)}_t +\CL_{\mathrm{dec}_2} \bar{e}_s)= E
(\bar{e}^{(1)}_t +\CL_{\mathrm{dec}_2} \bar{e}_s), \end{eqnarray*}
which implies that $\CL_{\mathrm{dec}_1}=\CL_{\mathrm{dec}_2}$.
\end{proof}

\begin{defn}\label{defn:s-perfect-basis} Let $s$ be strongly marked. We say that a perfect basis
$\{e_1, \dots, e_n\}$ for $\CF$ is {\it $s$-perfect} if it satisfies
the following conditions:

$\bullet$ $E \bar{e}_s \bigoplus \Big(\bigoplus_{s<i<t} E
\bar{e}_i\Big) \bigoplus E \bar{e}_t$ is a perfect $s$-decomposition
where $\bar{e}_i$ is the image of $e_i$ in $D/\CF_{s-1}D$,

$\bullet$ $N(e_t)=e_s$.

$\bullet$ For every $i>t_\CF(s)$ writing
$N(e_i)=\sum_{j=1}^{i-1}\lambda_{i,j} e_j$ we have
$\lambda_{i,s}=0$.
\end{defn}

\begin{rem}\label{rem:s-perfect-basis} The first condition in Definition
\ref{defn:s-perfect-basis} implies that, for every $i=s+1, \dots,
t_\CF(s)-1$ if we write $N(e_i)=\sum_{j=1}^{i-1}\lambda_{i,j} e_j$,
then $\lambda_{i,s}=0$.
\end{rem}

\begin{lem}\label{lem:s-perfect-basis} If $s$ is strongly marked, then there exists an $s$-perfect
basis.
\end{lem}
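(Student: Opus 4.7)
The plan is to construct the basis in four stages: build a $\varphi$-eigenbasis of $\CF_tD/\CF_{s-1}D$ coming from a perfect $s$-decomposition, lift it to $\CF_tD$ in a way compatible with condition~(b) of Definition~\ref{defn:s-perfect-basis}, extend to a perfect basis of $D$, and finally adjust the vectors of index $>t$ so that condition~(c) holds. The main technical point is the second stage, where $e_s$ and $e_t$ must simultaneously be $\varphi$-eigenvector lifts of prescribed quotient vectors \emph{and} satisfy the coupling $N(e_t)=e_s$; picking $e_s$ and $e_t$ independently would generally fail.

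Strong criticality of $s$ supplies a perfect $s$-decomposition $\CF_tD/\CF_{s-1}D = E\bar{e}_s \oplus L \oplus E\bar{e}_t$. Because $L$ is $\varphi$-stable and carries the induced refinement from $\overline{\CF}$, semisimplicity of $\varphi$ on $L$ lets us choose a basis $\bar{e}_{s+1},\ldots,\bar{e}_{t-1}$ of $L$ of $\varphi$-eigenvectors with $\varphi(\bar{e}_i)=\alpha_i\bar{e}_i$ such that $\CF_jD/\CF_{s-1}D=\bigoplus_{i=s}^{j} E\bar{e}_i$ for each $s\leq j\leq t$. For every $s\leq i\leq t$, semisimplicity of $\varphi$ on $\CF_iD$ together with the fact that $\gr^\CF_iD$ has $\varphi$-eigenvalue $\alpha_i$ yields a lift $e_i\in\CF_iD\cap D^{\varphi=\alpha_i}$ of $\bar{e}_i$. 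Since $N\varphi=p\varphi N$, the operator $N$ sends $D^{\varphi=\alpha_t}$ into $D^{\varphi=\alpha_t/p}=D^{\varphi=\alpha_s}$, and $N(\bar{e}_t)=\bar{e}_s$ in the quotient; hence $N(e_t)$ is automatically a $\varphi$-eigenvector lift of $\bar{e}_s$ lying in $\CF_sD$. The key move is to \emph{redefine} $e_s:=N(e_t)$ rather than pick $e_s$ independently, which makes condition~(b) hold by construction.

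Extend $\{e_1,\ldots,e_t\}$ to a perfect basis of $D$ by choosing any perfect basis $\{e_1,\ldots,e_{s-1}\}$ of $\CF_{s-1}D$ (via the same $\varphi$-eigenvector construction) and, for each $i>t$, any $e_i\in\CF_iD\cap D^{\varphi=\alpha_i}$ with nonzero image in $\gr^\CF_iD$. For condition~(c), write $N(e_i)=\sum_{j<i}\lambda_{i,j}e_j$ for $i>t$; semisimplicity together with $N(e_i)\in D^{\varphi=\alpha_i/p}$ forces $\lambda_{i,j}\neq 0\Rightarrow \alpha_j=\alpha_i/p$, so $\lambda_{i,s}\neq 0$ can occur only when $\alpha_i=p\alpha_s=\alpha_t$. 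For each such $i$, replace $e_i$ by $\tilde{e}_i:=e_i-\lambda_{i,s}e_t$: since $\alpha_i=\alpha_t$ this preserves the $\varphi$-eigenvector property, and since $e_t\in\CF_{i-1}D$ it preserves compatibility with $\CF$; a direct computation shows that, after rewriting in the new basis via $e_j=\tilde{e}_j+\lambda_{j,s}\tilde{e}_t$, the coefficient of $\tilde{e}_s$ in $N(\tilde{e}_i)$ vanishes while only the $\tilde{e}_t$-coefficient is adjusted. For the range $s<i<t$, the vanishing $\lambda_{i,s}=0$ is automatic because $L$ is $N$-stable and $\bar{e}_i\in L$ forces $N(\bar{e}_i)\in L$, as noted in Remark~\ref{rem:s-perfect-basis}. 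Combining these steps yields a basis satisfying all three conditions.
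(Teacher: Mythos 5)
Your proof is correct and follows essentially the same route as the paper's: lift eigenvectors from a perfect $s$-decomposition of $\CF_tD/\CF_{s-1}D$, define $e_s := N(e_t)$ to force condition (b), and correct each $e_i$ with $i>t$ by subtracting $\lambda_{i,s}e_t$ (legitimate since $\lambda_{i,s}\neq 0$ forces $\alpha_i=\alpha_t$). The extra justifications you supply — why $N(e_t)$ is an eigenvector lift of $\bar{e}_s$, why the $e_t$-corrections do not reintroduce $e_s$-coefficients — are sound and merely spell out what the paper leaves implicit.
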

\begin{proof} Write $t=t_\CF(s)$.
Let $\{e_1, \dots, e_{s-1}\}$ be a perfect ordered basis of
$\CF_{r-1}D$.

As $s$ is strongly marked, there exists a perfect $s$-decomposition
$\CF_tD/\CF_{s-1}D = E \bar{e}_s \oplus L \oplus E \bar{e}_t$.
Choose a perfect basis $\{\bar{e}_i: s<i<t\}$ for the induced
refinement on $L$ (identified with $\CF_{t-1}D/\CF_sD$). For $i\in
\{s+1, \dots, t\}$ let $e_i\in \CF_t D$ be any lifting of
$\bar{e}_i$ such that $\varphi(e_i)=\alpha_i e_i$. Put $e_s=N(e_t)$.

For every $i>t$ there exists $e'_i\in \CF_iD \backslash \CF_{i-1}D$
such that $\varphi(e'_i)=\alpha_ie'_i$. Next we define $e_i$ for
$i>t$ recursively from $t+1$ to $n$. Write
$N(e'_i)=\sum_{j=1}^{i-1}\mu_{i,j}e_j$. If $\mu_{i,s}=0$, we put
$e_i=e'_i$. If $\mu_{i,s}\neq 0$, then $\alpha_i=\alpha_t$. In this
case we put $e_i=e'_i- \mu_{i,s}e_t$. It is easy to prove the
resulting ordered basis $\{e_1, \dots, e_n\}$ is $s$-perfect.
\end{proof}

\subsection{Duality}

Let $D$ be a filtered $E$-$(\varphi,N)$-module, with $\CF$ a
refinement on $D$.

Let $D^*$ the filtered $E$-$(\varphi, N)$-module that is the dual of
$D$. Let $\check{\CF}$ be the refinement on $D^*$ such that
$$ \check{\CF}_i D^* : = (\CF_{n-i}D)^\bot=\left\{y\in D^*: \langle y, x
\rangle =0 \text{ for all }x\in \CF_{n-i}D \right\}. $$ We call
$\check{\CF}$ the {\it dual refinement} of $\CF$.

\begin{prop}\label{prop:duality-critical} If $s$ is marked for $\CF$ and $t=t_\CF(s)$, then
$n+1-t$ is marked for $\check{\CF}$ and
$n+1-s=t_{\check{\CF}}(n+1-t)$.
\end{prop}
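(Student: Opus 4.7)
The plan is to verify conditions (a) and (b) of Lemma \ref{rem:critical} for the pair $(n+1-t, n+1-s)$ applied to $\check\CF$, deducing them from the same conditions applied to $(s,t)$ and $\CF$. Writing $N$ also for the induced monodromy on $D^*$, and substituting $\check\CF_iD^* = (\CF_{n-i}D)^\perp$, the desired criticality statement amounts to
$$ N(\CF_sD)^\perp \cap (\CF_{t-1}D)^\perp = N(\CF_sD)^\perp \cap (\CF_tD)^\perp $$
and
$$ N(\CF_{s-1}D)^\perp \cap (\CF_{t-1}D)^\perp \supsetneq N(\CF_{s-1}D)^\perp \cap (\CF_tD)^\perp. $$

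The key ingredient I would install first is the duality identity
$$ N(W^\perp) = (N^{-1}W)^\perp \qquad \text{for any } E\text{-subspace } W \subseteq D. $$
The inclusion ``$\subseteq$'' is immediate from the adjoint relation $\langle Ny, x\rangle = \pm\langle y, Nx\rangle$; equality then follows from a routine dimension count using that the kernel of $N$ acting on $D^*$ equals $(ND)^\perp$. Combining this with the elementary $(A+B)^\perp = A^\perp \cap B^\perp$ and taking orthogonals on both sides, the two conditions above translate respectively into
$$ N\CF_tD \subseteq \CF_sD + N\CF_{t-1}D \qquad \text{and} \qquad N\CF_tD \not\subseteq \CF_{s-1}D + N\CF_{t-1}D. $$

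To deduce these, I would apply Lemma \ref{rem:critical} to $(s,\CF)$: its condition (b) supplies an $x \in \CF_tD$ with $Nx \in \CF_sD \setminus \CF_{s-1}D$ and guarantees $N\CF_{t-1}D \cap \CF_sD \subseteq \CF_{s-1}D$. If $Nx = b + Ny$ with $b \in \CF_{s-1}D$ and $y \in \CF_{t-1}D$, then $Ny = Nx - b$ would lie in $N\CF_{t-1}D \cap \CF_sD \subseteq \CF_{s-1}D$, forcing $Nx \in \CF_{s-1}D$, a contradiction; this establishes the non-containment. The same reasoning shows $x$ cannot lie in $\CF_{t-1}D$, so $\CF_tD = Ex + \CF_{t-1}D$, whence $N\CF_tD = E \cdot Nx + N\CF_{t-1}D \subseteq \CF_sD + N\CF_{t-1}D$, yielding the inclusion.

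The only nontrivial step is the duality identity $N(W^\perp) = (N^{-1}W)^\perp$; once it is in place, the rest is bookkeeping between $N$, its preimages, and the flag $\CF$.
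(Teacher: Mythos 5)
Your proposal is correct and takes essentially the same approach as the paper: both reduce the assertion to the two set-theoretic conditions of Lemma \ref{rem:critical} and unwind them via orthogonal complements and the adjoint relation $\langle Ny^*,x\rangle=\pm\langle y^*,Nx\rangle$. The main cosmetic difference is that you isolate the duality computation as a reusable identity $N(W^\perp)=(N^{-1}W)^\perp$ and apply it uniformly to both conditions, whereas the paper handles the first condition by leaving $N$ on the outside (reducing to $\CF_s+N\CF_{t-1}=\CF_s+N\CF_t$) and handles the second with what amounts to your identity written as $(N\check\CF_{n+1-s})^\perp=\{x:Nx\in\CF_{s-1}\}$ followed by a double orthogonal; you also re-derive $N\CF_tD\subseteq\CF_sD+N\CF_{t-1}D$ from Lemma \ref{rem:critical}(b), while the paper cites this as the defining property of $s=s_\CF(t)$. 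The substance and level of rigor match.
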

\begin{proof} By Lemma \ref{rem:critical} we only need to prove that
\begin{equation}\label{eq:duality-a} N \check{\CF}_{n-s}\cap \check{\CF}_{n+1-t} = N
\check{\CF}_{n-s} \cap \check{\CF}_{n-t} \end{equation} and
\begin{equation}\label{eq:duality-b} N \check{\CF}_{n+1-s}\cap \check{\CF}_{n+1-t} \supsetneq N
\check{\CF}_{n+1-s} \cap \check{\CF}_{n-t}.
\end{equation}

For (\ref{eq:duality-a}) we have
\begin{eqnarray*} N \check{\CF}_{n-s}\cap \check{\CF}_{n+1-t} &=&
\{N (y^*): y^* \in \check{\CF}_{n-s}, \langle N (y^*), x\rangle=0 \
\forall\ x\in \CF_{t-1} \} \\
& = & \{N (y^*): y^* \in \check{\CF}_{n-s}, \langle  y^*, N(x)
\rangle=0 \ \forall\ x\in \CF_{t-1} \} \\
&=& N( ( \CF_{s}+ N \CF_{t-1})^\bot)
\end{eqnarray*} and
$$  N \check{\CF}_{n-s}\cap \check{\CF}_{n-t} = N( ( \CF_{s}+ N \CF_{t})^\bot).
$$ Then (\ref{eq:duality-a}) follows from the relation $\CF_{s}+ N \CF_{t-1}=\CF_s+ N
\CF_t$.

For (\ref{eq:duality-b}) we have
\begin{eqnarray*}
(N \check{\CF}_{n+1-s})^\bot &=& \{ x\in D: \langle N(y^*),
x\rangle=0 \ \forall\ y\in \check{\CF}_{n+1-s} \} \\
&=& \{ x\in D: \langle y^*, N(x)\rangle=0 \ \forall\ y\in
\check{\CF}_{n+1-s} \} \\ &=& \{ x\in D: N (x)\in \CF_{s-1} \}.
\end{eqnarray*} Thus
$$ N \check{\CF}_{n+1-s}\cap \check{\CF}_{n+1-t} =(\{x\in D: N(x) \in \CF_{s-1}\}+ \CF_{t-1})^\bot
$$ and
$$N \check{\CF}_{n+1-s}\cap \check{\CF}_{n-t} =(\{x\in D: N(x) \in \CF_{s-1}\}+ \CF_t)^\bot.
$$ But $\{x\in D: N(x) \in \CF_{s-1}\}+ \CF_{t} \supsetneq \{x\in D: N(x) \in \CF_{s-1}\}+
\CF_{t-1}$. Indeed, \begin{eqnarray*} && (\{x\in D: N(x) \in
\CF_{s-1}\}+ \CF_{t}) / (\{x\in D: N(x) \in \CF_{s-1}\}+ \CF_{t-1}) \\
&& \cong\CF_t/ (\CF_{t-1}+ \{x\in \CF_t: N(x)\in \CF_{s-1}\} )
=\CF_t/\CF_{t-1}.
\end{eqnarray*} Thus (\ref{eq:duality-b}) follows.
\end{proof}

If $L\subset M$ are submodules of $D$, then $M^\bot\subset L^\bot$.
The pairing $\langle \cdot, \cdot \rangle:  L^\bot\times M$ induces
a non-degenerate pairing on $L^\bot/M^\bot \times M/L  $, so that we
can identify $L^\bot/M^\bot$ with the dual of $M/L$ naturally. In
particular, $\gr^{\check{\CF}}_i D^*$ is naturally isomorphic to the
dual of $\gr^{\CF}_{n+1-i}D$. Thus $\gr^{\check{\CF}}_\bullet D^*$
is naturally isomorphic to the dual of $\gr^\CF_\bullet D$.

\begin{prop} $N_{\check{\CF}}$ is dual to $-N_\CF$.
\end{prop}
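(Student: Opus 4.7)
My plan is to verify the duality on each critical summand separately. By Proposition~\ref{prop:duality-critical}, $s$ is critical for $\CF$ with $t=t_\CF(s)$ if and only if $n+1-t$ is critical for $\check{\CF}$ with $n+1-s=t_{\check{\CF}}(n+1-t)$. Under the canonical identification $\gr^{\check{\CF}}_iD^*\cong(\gr^\CF_{n+1-i}D)^*$ recorded just before the statement, both $N_\CF$ and $N_{\check{\CF}}$ decompose as direct sums of one-dimensional maps indexed by the same set of critical pairs $(s,t)$. It therefore suffices to show, for each such pair, that the component $(N_{\check{\CF}})_{n+1-s,n+1-t}\colon\gr^{\check{\CF}}_{n+1-s}D^*\to\gr^{\check{\CF}}_{n+1-t}D^*$ and the component $(N_\CF)_{t,s}\colon\gr^\CF_tD\to\gr^\CF_sD$ are transposes of each other up to an overall minus sign.

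I will prove this by a direct pairing computation. Fix representatives $x\in\CF_tD$ of $\bar{x}\in\gr^\CF_tD$ and $y^*\in(\CF_{s-1}D)^\bot$ of $\bar{y}^*\in\gr^{\check{\CF}}_{n+1-s}D^*$, and write $N(x)=a+z$ with $a\in N(\CF_{t-1}D)$, $z\in\CF_sD$, and $N(y^*)=a^*+z^*$ with $a^*\in N((\CF_sD)^\bot)$, $z^*\in(\CF_{t-1}D)^\bot$, so that by definition $N_\CF(\bar{x})=z+\CF_{s-1}D$ and $N_{\check{\CF}}(\bar{y}^*)=z^*+\check{\CF}_{n-t}D^*$. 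The duality claim reduces to the identity $\langle z^*,x\rangle+\langle y^*,z\rangle=0$. Substituting and applying the rule $\langle N(y^*),x\rangle=-\langle y^*,N(x)\rangle$ (the sign convention for $N$ on the dual $E$-$(\varphi,N)$-module, needed for compatibility with $N\varphi=p\varphi N$) cancels the principal terms, so the identity reduces to $\langle a^*,x\rangle+\langle y^*,a\rangle=0$. Writing $a=N(x')$ with $x'\in\CF_{t-1}D$ and $a^*=N(w^*)$ with $w^*\in(\CF_sD)^\bot$, two more applications of the same sign rule, together with the orthogonality vanishings $\langle w^*,z\rangle=0$ (from $z\in\CF_sD$) and $\langle z^*,x'\rangle=0$ (from $x'\in\CF_{t-1}D$), yield $\langle a^*,x\rangle=\langle a^*,x'\rangle$ and $\langle y^*,a\rangle=-\langle a^*,x'\rangle$, so the sum vanishes.

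The main technical point I expect is bookkeeping: the auxiliary elements $x'$ and $w^*$ are determined only up to the kernels of $N$ on the respective subspaces, so one must check that the intermediate scalar $\langle a^*,x'\rangle$ is independent of these choices. This independence is precisely what is guaranteed by Lemma~\ref{lem:Ncf} together with its dual version; both rest on the orthogonality identities already exploited in the proof of Proposition~\ref{prop:duality-critical}. Once well-definedness is verified, the calculation above closes the proof.
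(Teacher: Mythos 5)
Your proof is correct, and it takes a genuinely different (and in some ways cleaner) route than the paper's. The paper's proof first normalizes by choosing a perfect basis with $N(e_t)=e_s$, reduces the statement to showing $Ne^*_s+e^*_t\in\check{\CF}_{n-t}D^*+N\check{\CF}_{n-s}D^*$, computes the orthogonal complement of this subspace to be $\{x\in\CF_tD:\ N(x)\in\CF_sD\}$, and then verifies that $\langle Ne^*_s+e^*_t,\,x\rangle=0$ for such $x$ by expanding $x=ae_t+y$. You instead work entirely basis-free: after invoking Proposition~\ref{prop:duality-critical} to match the critical pairs (which, in particular, aligns the zero blocks of $N_\CF^*$ and $N_{\check{\CF}}$, since both vanish precisely on the graded pieces indexed outside the critical pairs), you express the duality claim as the intrinsic pairing identity $\langle z^*,x\rangle+\langle y^*,z\rangle=0$ coming directly from the defining decompositions $N(x)=a+z$ and $N(y^*)=a^*+z^*$, and then cancel terms using the sign rule $\langle N(f),v\rangle=-\langle f,N(v)\rangle$ together with the two orthogonality vanishings $\langle w^*,z\rangle=0$ and $\langle z^*,x'\rangle=0$. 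The advantage of your version is that it makes no reference to a chosen basis and exposes exactly which orthogonality relations (i.e.\ exactly which parts of the refinement data) are doing the work.

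One small remark: your closing worry about well-definedness of the intermediate scalar $\langle a^*,x'\rangle$ is unnecessary. You are not asserting that $\langle a^*,x'\rangle$ is an invariant; you only use it as an auxiliary quantity evaluated at the \emph{same} chosen $x'$ and $w^*$ in both equalities $\langle a^*,x\rangle=\langle a^*,x'\rangle$ and $\langle y^*,a\rangle=-\langle a^*,x'\rangle$, so the cancellation holds for \emph{any} admissible choice. No appeal to Lemma~\ref{lem:Ncf} (or its dual) is needed at that step.
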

\begin{proof}
By Proposition \ref{prop:duality-critical}, $N_\CF(\gr^\CF_t
D)=\gr^\CF_s D$ if and only if
$N_{\check{\CF}}(\gr^{\check{\CF}}_{n+1-s}D^*)=\gr^{\check{\CF}}_{n+1-t}
D^*$. We choose a perfect basis $\{e_1,\dots e_n\}$ for $\CF$ such
that $N(e_t)=e_s$. Then $\{e_i+\CF_{i-1}D: i=1,\dots, n\}$ is a
basis of $\mathrm{gr}^{\CF}_\bullet D$, and its dual basis is
$\{e^*_i+\check{\CF}_{n-i}D: i=1,\dots, n\}$.

Note that $N_\CF(e_t+\CF_{t-1}D)=e_s+\CF_{s-1}D$. What we need to
show is that $N_{\check{\CF}}(e^*_s + \check{\CF}_{n-s}D^*)=
-e^*_t+\check{\CF}_{n-t}D^*$. For this we only need to prove that
$Ne^*_s+e^*_t$ is in $\check{\CF}_{n-t}D^* + N
\check{\CF}_{n-s}D^*$. We have
\begin{eqnarray*}(\check{\CF}_{n-t}D^* + N
\check{\CF}_{n-s}D^*)^\bot & = & (\check{\CF}_{n-t}D^*)^\bot \cap (
N \check{\CF}_{n-s}D^*)^\bot  \\
& = & \CF_t D \cap \{ x\in D: N(x)\in \CF_s D\} \\ & = & \{x\in
\CF_t D: N(x)\in \CF_s D\}.
\end{eqnarray*} For every $x\in \CF_tD$ such that $N(x)\in \CF_s D$,
we can write $x$ in the form $x=ae_t+y$ with $y\in \CF_{t-1}D$. Then
$\langle e^*_t, y\rangle=0$. As $N(y)\in \CF_{s}D \cap N \CF_{t-1}D=
\CF_{s-1}D \cap N \CF_{t-1}D$, we have $\langle e^*_s,
N(y)\rangle=0$. Hence
$$ \langle Ne^*_s + e^*_t  ,  x\rangle = \langle e^*_s, -N(ae_t+y) \rangle + \langle e^*_t, ae_t+y \rangle =0
,$$ as expected.
\end{proof}

Let $\{e_1, \dots, e_n\}$ be a perfect basis for $\CF$, and let
$\{e^*_1, \dots, e^*_n\}$ be the dual basis of $\{e_1,\dots, e_n\}$.
Then $\{e^*_n ,\dots, e^*_1\}$ is perfect for $\check{\CF}$.

\begin{prop}\label{prop:duality-strong-critical}
\begin{enumerate}
\item\label{it:duality-strong-critical-a} $s$ is strongly marked for $\CF$ if and only if
$n+1-t_\CF(s)$ is strongly marked for $\check{\CF}$.
\item\label{it:duality-strong-critical-b} If $s$ is strongly marked for $\CF$, then
$\{e_1,\dots, e_n\}$ is $s$-perfect for $\CF$ if and only if
$\{e^*_n, \dots e^*_{s+1}, -e^*_s, e^*_{s-1}, \dots, e^*_1\}$ is
$(n+1-t_\CF(s))$-perfect for $\check{\CF}$.\end{enumerate}
\end{prop}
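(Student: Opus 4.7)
Set $t = t_\CF(s)$, $s' = n+1-t$, and $t' = n+1-s$. By Proposition~\ref{prop:duality-critical}, $s'$ is critical for $\check{\CF}$ with $t_{\check{\CF}}(s') = t'$, so both statements are well-posed. The plan is to build an explicit bijection between $s$-decompositions of the filtered $E$-$(\varphi,N)$-module $M := \CF_tD/\CF_{s-1}D$ and $s'$-decompositions of its dual $M^\vee := \check{\CF}_{t'}D^*/\check{\CF}_{s'-1}D^*$, and then read off both assertions from this dictionary. The relations $\check{\CF}_{t'}D^* = (\CF_{s-1}D)^\bot$ and $\check{\CF}_{s'-1}D^* = (\CF_tD)^\bot$ make the pairing $D^* \times D \to E$ induce a perfect duality $M^\vee \times M \to E$, and the previous proposition gives $\langle Ny, x\rangle = -\langle y, Nx\rangle$ on $M^\vee \times M$. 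Given an $s$-decomposition $M = E\bar{e}_s \oplus L \oplus E\bar{e}_t$, let $\bar{e}_s^\vee, \bar{e}_t^\vee$ be the basis of the $L^\bot$-complement in $M^\vee$ dual to $\bar{e}_s, \bar{e}_t$, and set $\bar{f}_{t'} := \bar{e}_s^\vee$ and $\bar{f}_{s'} := -\bar{e}_t^\vee$. Then $L^\bot$ and $E\bar{f}_{s'}\oplus E\bar{f}_{t'}$ are $\varphi$- and $N$-stable, and a short pairing computation (using $N\bar{e}_s = 0$ and $N\bar{e}_t = \bar{e}_s$ in $M$) yields $N(\bar{f}_{t'}) = \bar{f}_{s'}$, so $M^\vee = E\bar{f}_{s'} \oplus L^\bot \oplus E\bar{f}_{t'}$ is an $s'$-decomposition.

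To translate the filtration data, observe that dualization interchanges the filtration on the submodule $E\bar{e}_s\oplus E\bar{e}_t \subset M$ with the filtration on the quotient $M^\vee/L^\bot$, and vice versa. A direct check using $\Fil^j D^* = (\Fil^{1-j}D)^\bot$ shows that Case~1 for $(\CF,s)$ with parameters $(k_s, k'_t, \CL_{\mathrm{dec}})$ becomes Case~$1'$ for $(\check{\CF}, s')$ with parameters $(-k'_t, -k_s, \CL_{\mathrm{dec}})$, and that Case~$1'$ for $(\CF,s)$ with parameters $(k'_s, k_t, \CL'_{\mathrm{dec}})$ becomes Case~$1$ for $(\check{\CF}, s')$ with parameters $(-k_t, -k'_s, \CL'_{\mathrm{dec}})$. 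In particular, the perfection inequality $k'_s < k'_t$ in the source translates to $-k'_t < -k'_s$ in the target, which is again the perfection inequality in the new parameters. Part~(a) follows, as does the equality $\CL_{\check{\CF}, s'} = \CL_{\CF, s}$.

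For part~(b), define $f_i := e^*_{n+1-i}$ for $i \neq t'$ and $f_{t'} := -e^*_s$. Compatibility of $\{f_i\}$ with $\check{\CF}$ and with the $\varphi$-eigenbasis condition follows immediately from $\check{\CF}_r D^* = (\CF_{n-r}D)^\bot$. Condition~(1) of Definition~\ref{defn:s-perfect-basis} at $s'$ for $\{f_i\}$ is the dualization of the same condition at $s$ for $\{e_i\}$, so it is part~(a) applied to the specific $s$-decomposition cut out by $\{e_i\}$. Condition~(2) states $N(f_{t'}) = f_{s'}$, i.e.\ $N(e^*_s) = -e^*_t$, which I verify by pairings: for $\ell \neq t$, the identity $\langle N(e^*_s), e_\ell\rangle = -\langle e^*_s, N(e_\ell)\rangle$ vanishes thanks to conditions~(2), (3) and Remark~\ref{rem:s-perfect-basis} for $\{e_i\}$, while for $\ell = t$ it equals $-1$. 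Condition~(3) requires the coefficient of $f_{s'} = e^*_t$ in $N(f_i)$ to vanish for $i > t'$, which reduces to $\langle N(e^*_{n+1-i}), e_t\rangle = -\langle e^*_{n+1-i}, e_s\rangle = 0$ since $n+1-i < s$. The converse implication follows by applying the same construction to $(D^*, \check{\CF})$ and using the canonical identification $D^{**} = D$.

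The main obstacle is clean book-keeping. The signs in $\bar{f}_{s'} := -\bar{e}_t^\vee$ and $f_{t'} := -e^*_s$ are both forced by $N_{\check{\CF}} = -N_\CF^\vee$, and the composed operation of index reversal $i \mapsto n+1-i$ together with these signs must be kept consistent throughout so that each of the three defining conditions of an $s$-perfect basis maps precisely onto the corresponding condition for an $s'$-perfect basis; once the dictionary is set up correctly, the verifications in (a) and (b) are essentially routine.
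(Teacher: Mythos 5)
Your proof follows essentially the same route as the paper: dualize, re-index via $i \mapsto n+1-i$, insert the sign on the single vector $-e^*_s$, and verify the three conditions of Definition~\ref{defn:s-perfect-basis} by pairings. The pairing computation for $N(-e^*_s)=e^*_t$ and the vanishing check for indices $>t'$ are the same as the paper's.

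One piece of bookkeeping is off, though it is repairable. You write the dual decomposition as $M^\vee = E\bar{f}_{s'} \oplus L^\bot \oplus E\bar{f}_{t'}$ and then discuss the quotient $M^\vee/L^\bot$. But $L^\bot = (\bigoplus_{s<i<t} E\bar{e}_i)^\bot$ is the two-dimensional space spanned by the dual vectors $\bar{e}^\vee_s, \bar{e}^\vee_t$ — which is exactly the span of your $\bar{f}_{s'}, \bar{f}_{t'}$, not a complement to it. The middle component of the $s'$-decomposition must be $(E\bar{e}_s \oplus E\bar{e}_t)^\bot$, which has dimension $t-s-1 = t'-s'-1$; the dimension count $1+\dim L^\bot + 1 = 4 \ne t-s+1$ (in general) signals the swap. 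Likewise the quotient you should compare filtrations on is $M^\vee/(E\bar{e}_s\oplus E\bar{e}_t)^\bot$, not $M^\vee/L^\bot$. The paper's notation makes this explicit: $\check{L} = \bigoplus_{s<i<t} E\bar{e}^*_i = (E\bar{e}_s\oplus E\bar{e}_t)^\bot$. Once $L^\bot$ is replaced by $(E\bar{e}_s\oplus E\bar{e}_t)^\bot$ in those spots, the translation of the Case~1/Case~$1'$ filtration data and the perfection inequality goes through exactly as you describe, and the argument coincides with the paper's.
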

\begin{proof} Assume that $s$ is strongly marked for $\CF$, $t=t_\CF(s)$ and $\{e_1, \dots,
e_n\}$ is $s$-perfect for $\CF$. Let $\bar{e}_i$ $(s\leq i\leq t)$
be the image of $e_i$ in $\CF_tD/\CF_{s-1}D$, and put
$L=\bigoplus_{s<i<t} E \bar{e}_i$. By the definition of $s$-perfect
bases, $E \bar{e}_s \bigoplus L \bigoplus E \bar{e}_t$ is a perfect
$s$-decomposition.

Similarly, let $\bar{e}^*_i$ $(s\leq i\leq t)$ be the image of
$e^*_i$ in $\check{\CF}_{n+1-s}D^*/\check{\CF}_{n-t}D^*$. Note that
$\check{\CF}_{n+1-s}D^*/\check{\CF}_{n-t}D^*$ is naturally
isomorphic to the dual of $\CF_t D/\CF_{s-1}D$. Put
$\check{L}=\bigoplus_{s<i<t} E \bar{e}^*_i$. Then $\check{L}=(E
\bar{e}_s\oplus E\bar{e}_t)^\bot$ and $L=(E\bar{e}^*_t \oplus E
\bar{e}^*_s)^\bot$. Note that $E \bar{e}^*_t \oplus E \bar{e}^*_s$
is isomorphic to the dual of the quotient of $\CF_tD/\CF_sD$ by $L$,
and the quotient of $\check{\CF}_{n+1-s}D^*/\check{\CF}_{n-t}D^*$ by
$\check{L}$ is isomorphic to the dual of $E e_s \oplus E e_t$. Hence
$E \bar{e}_t \oplus \check{L} \oplus E \bar{e}_s$ is an
$(n+1-t_\CF(s))$-perfect decomposition for $\check{\CF}$. This
proves (\ref{it:duality-strong-critical-a}).

For $i<s$, write $N(e^*_i)=\sum_{j=i+1}^n \lambda_{i,j} e^*_j$. Then
$$\lambda_{i,t} = \langle N (e^*_i), e_t\rangle = \langle e^*_i, -
N(e_t)\rangle = \langle e^*_i, -e_s \rangle=0.$$ Write
$N(-e^*_s)=\sum_{j=t}^n \lambda_{s,j} e^*_j$. Then
$$\lambda_{s,j} = \langle N (-e^*_s), e_j\rangle = \langle e^*_s,
N(e_j)\rangle =\left\{\begin{array}{ll} 1 & \text{ if }j=t, \\ 0 &
\text{ if }j > t.
\end{array}\right.$$ Thus $N(-e^*_s)=e^*_t$. This proves (\ref{it:duality-strong-critical-b}).
\end{proof}

\section{Galois cohomology of $V^*\otimes_E V$} \label{sec:main-teck}

\subsection{A lemma}
Let $\CL$ be an element in $E$. Let $D$ be a filtered
$E$-$(\varphi,N)$-module $D=E f_1 \oplus E f_2 \oplus E f_3$ with
\begin{eqnarray*}&& \varphi(f_1)=p^{-1}f_1, \ \varphi(f_2)=f_2, \ \varphi(f_3)=f_3,
\\ && N(f_1)=0, \ N(f_2)= - f_1, \ N(f_3)=f_1, \\
&& \Fil^0 D = E (f_2- \CL f_1) \oplus E (f_3+\CL f_1).
\end{eqnarray*}
Let $\pi_i$ be the projection map
$$ \BX_\st(D)
\rightarrow \BB_{\st,E}, \ \ \ \sum_{j=1}^3 a_j f_j \mapsto a_j. $$

\begin{lem}\label{lem:aux} Let $c: G_{\BQ_p}\rightarrow \BX_\st(D)$ be a
$1$-cocycle whose class in $H^1(\BX_\st(D))$ belongs to
$\ker(H^1(\BX_\st(D))\rightarrow H^1(\BX_\dR(D)))$. Then there exist
$\gamma_{2,1}, \gamma_{2,2}, \gamma_{3,1}, \gamma_{3,2}\in E$ such
that $\pi_2(c)=\gamma_{2,1}\psi_1+\gamma_{2,2}\psi_2$ and
$\pi_3(c)=\gamma_{3,1}\psi_1+\gamma_{3,2}\psi_2$. Furthermore,
$\gamma_{2,1}-\gamma_{3,1}=\CL(\gamma_{2,2}-\gamma_{3,2})$.
\end{lem}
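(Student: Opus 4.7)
The plan is to combine Galois cohomology of $\BB_{\cris,E}^{\varphi=1}$ with a direct-sum decomposition of $D$ that reduces the $\CL$-relation to a two-dimensional computation.

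First I would parametrize $\BX_\st(D)$: writing $c_\sigma = a_1(\sigma) f_1 + a_2(\sigma) f_2 + a_3(\sigma) f_3$, the defining conditions force $a_2, a_3 \in \BB_{\cris,E}^{\varphi=1}$ and $a_1 \in U_{1,1}$ with $N a_1 = a_2 - a_3$. In the Hodge-adapted basis $g_1 = f_1$, $g_2 = f_2 - \CL f_1$, $g_3 = f_3 + \CL f_1$ (of weights $-1,0,0$), the vanishing of $[c]$ in $H^1(\BX_\dR(D))$ decomposes into three independent conditions: coboundary statements for $a_2$ and $a_3$ modulo $\BB_{\dR,E}^+$, and the statement that $a_1 + \CL(a_2 - a_3)$ is a coboundary modulo $\Fil^1 \BB_{\dR,E}$. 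The first two, together with the fundamental exact sequence $0 \to E \to \BB_{\cris,E}^{\varphi=1} \to \BB_{\dR,E}/\BB_{\dR,E}^+ \to 0$ and Lemma~\ref{lem:coh-wellknown}, give the isomorphism $H^1(E) \xrightarrow{\sim} H^1(\BB_{\cris,E}^{\varphi=1})$ and hence produce the constants $\gamma_{i,j} \in E$ with $\pi_i(c) \equiv \gamma_{i,1}\psi_1 + \gamma_{i,2}\psi_2$ modulo coboundaries for $i=2,3$, establishing the first half of the lemma.

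For the $\CL$-relation I exploit a direct-sum decomposition of $D$ as a filtered $(\varphi,N)$-module: setting $e_0 = f_2+f_3$, $e_1 = f_1$, $e_2 = (f_2 - f_3)/2$, direct computation shows $\Fil^0 D = Ee_0 \oplus E(e_2 - \CL e_1)$ and that both $Ee_0$ and $D_\CL := \langle e_1, e_2\rangle$ are stable under $\varphi$ and $N$. Thus $D = Ee_0 \oplus D_\CL$, where $Ee_0$ is the trivial rank-one module and $D_\CL$ is the standard two-dimensional semistable filtered $(\varphi,N)$-module with $\CL$-invariant equal to $\CL$. Correspondingly, $c_\sigma = B(\sigma) e_0 + (a_1(\sigma) e_1 + A(\sigma) e_2)$ with $B = (a_2+a_3)/2$ and $A = a_2 - a_3$, and the hypothesis decouples: the $Ee_0$-component is vacuous (since $H^1(\BB_{\dR,E}/\BB_{\dR,E}^+)=0$), while the $D_\CL$-component places us in exactly the two-dimensional $\CL$-invariant scenario underlying Colmez's Theorem~\ref{thm:colmez}.

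The main obstacle is then the two-dimensional statement: given a cocycle $a_1 e_1 + A e_2 \in \BX_\st(D_\CL)$ whose class dies in $H^1(\BX_\dR(D_\CL))$, the class $[A] = \delta_1[\psi_1] + \delta_2[\psi_2]$ with $\delta_i = \gamma_{2,i} - \gamma_{3,i}$ satisfies $\delta_1 = \CL\delta_2$. For this I would use the decomposition $U_{1,1} = \BB_{\cris,E}^{\varphi=p} \oplus \BB_{\cris,E}^{\varphi=1}\cdot u$ from the proof of Lemma~\ref{lem:exact-Bcris-BdR} to write $a_1 = a_1^{(0)} - A u$, so that the third condition becomes the vanishing of $[a_1^{(0)} + A(\CL - u)]$ in $H^1(\BB_{\dR,E}/\Fil^1 \BB_{\dR,E})$. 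The hard part is to extract the scalar identity: the group $H^1(\BB_{\dR,E}/\Fil^1 \BB_{\dR,E})$ is one-dimensional over $E$ by Tate's theorem applied to $\BB_{\dR,E}^+/\Fil^1 \BB_{\dR,E}$, the image of $[\psi_1]$ there is zero, and the contribution of the $a_1^{(0)}$-piece is controlled by the Kummer-cocycle defect $\sigma(u) = u + c(\sigma) t_\cyc$. Carefully bookkeeping these contributions shows that the vanishing is equivalent precisely to $\delta_1 - \CL\delta_2 = 0$, which yields the desired identity.
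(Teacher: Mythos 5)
Your approach diverges from the paper's at the decisive step, so let me compare. The paper never computes in $H^1(\BB_{\dR,E}/\Fil^1\BB_{\dR,E})$ directly. Instead it replaces the filtration by the equivalent one with jumps at $-1,0,1$, invokes Proposition~\ref{prop:same-kernel} to see that the hypothesis is unchanged, and then uses admissibility to pass to an honest Galois representation $W=\BV_\st(D)$. After killing the $E(f_2+f_3)$-direction it lands on a two-dimensional quotient $W_1$ whose extension class in $H^1(E(1))$ is exactly $(1,\CL)$, and the relation $\gamma_{2,1}-\gamma_{3,1}=\CL(\gamma_{2,2}-\gamma_{3,2})$ then drops out of the \emph{already computed} Tate cup-product formula $\langle a_1\psi_1+a_2\psi_2,(b_1,b_2)\rangle=-a_1b_1+a_2b_2$ quoted from [Cz2010, \S4.1]. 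Your reduction via the direct sum $D=Ee_0\oplus D_\CL$ with $e_0=f_2+f_3$ is the same splitting seen from the submodule rather than the quotient side, and it is a clean and correct observation; the parametrization of $\BX_\st(D)$ and the identification of the three $\BX_\dR$-components in the basis $g_1,g_2,g_3$ are also correct (note, though, that the $g_2,g_3$ conditions are automatically vacuous because $H^1(\BB_{\dR,E}/\BB^+_{\dR,E})=0$, so they do not ``give'' the isomorphism $H^1(E)\xrightarrow{\sim}H^1(\BB_{\cris,E}^{\varphi=1})$ --- that isomorphism exists regardless of the hypothesis and is what produces the $\gamma_{i,j}$).

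The gap is in the final paragraph, which is precisely the heart of the lemma. Once you split $a_1=a_1^{(0)}-Au$, the piece $a_1^{(0)}$ is \emph{not} a $1$-cocycle: from $a_1(\sigma\tau)=a_1(\sigma)+\sigma(a_1(\tau))$ one gets $a_1^{(0)}(\sigma\tau)=a_1^{(0)}(\sigma)+\sigma(a_1^{(0)}(\tau))-\sigma(A(\tau))\,(\sigma(u)-u)$, so $a_1^{(0)}$ only becomes a cocycle after reduction modulo $\Fil^1\BB_{\dR,E}$, and you need to first adjust $A$ by a coboundary (using the surjectivity in Lemma~\ref{lem:exact-Bcris-BdR}) so that $A$ is $E$-valued, since otherwise $A(\CL-u)$ does not reduce to $\CL A$ modulo $\Fil^1$. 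Even after these normalizations, extracting ``$\delta_1-\CL\delta_2=0$'' from the vanishing of the resulting class in $H^1(\BC_p\otimes_{\BQ_p}E)$ is exactly equivalent to establishing the Tate pairing formula $\langle a_1\psi_1+a_2\psi_2,(b_1,b_2)\rangle=-a_1b_1+a_2b_2$; the sentence ``carefully bookkeeping these contributions shows\ldots'' is where that computation would go, and it is not carried out. So while your route is genuinely different --- it stays inside $\BB_\dR$-cohomology and would, if completed, re-derive rather than cite the duality formula --- as written it leaves the one nontrivial scalar identity unproved. To close it you would either have to push through the explicit computation with $v$ and the normalization of $u\in\Fil^1\BB_{\dR,E}$, or fall back on the cited pairing formula, at which point you are essentially back to the paper's argument.
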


The proof of Lemma \ref{lem:aux} needs the Tate duality pairing $
H^1(\BQ_p)\times H^1(\BQ_p(1))\rightarrow H^2(\BQ_p(1)) $. We give a
precise description of it following \cite[\S 4.1]{Cz2010}.

Let $v\in \BB_{\cris}^{\varphi=p}$ be such that $v/t_\cyc\in
\BB^{\varphi=1}_{\cris}$ and $1/t_\cyc$ have the same image in
$\BB_{\dR}/\BB^+_{\dR}$. Let $u$ be the element of $\BB_\st$ such
that $u\in \Fil^1 \BB_{\dR}$, $\varphi(u)=pu$, $N(u)=-1$, and
$\sigma(u)-u\in \BQ_p \, t_\cyc$. Then $\sigma\mapsto \sigma(u)-u$
and $\sigma\mapsto \sigma(v)-v$ form an $E$-basis of $H^1(E(1))$.
Let $(b_1,b_2)\in E\times E$ denote the $1$-cocycle $\sigma\mapsto
(\sigma-1)(b_1 u+ b_2 v)$. The $E$-representation corresponding to
$(1, \ell)$ is attached to the filtered $E$-$(\varphi, N)$-module
$(D_\ell = Ee \oplus E f , \varphi,N,\Fil)$ with
$$ \varphi( e ) = p^{-1}e, \ \ \varphi(f) = f, \ \ N(e)=0, \ \
N(f)=e
$$ and
$$ \Fil^j D_\ell =\left\{ \begin{array}{ll} D_\ell & \text{ if } j \leq -1 , \\
E\cdot (f+\ell e) & \text{ if } j=0 ,  \\ 0 & \text{ if } j\geq 1 .
\end{array} \right. $$

Let $H^1(E)\times H^1(E(1)) \rightarrow E$ be the pairing induced by
the Tate duality pairing
$$ H^1(\BQ_p)\times H^1(\BQ_p(1))\rightarrow
H^2(\BQ_p(1)) $$ and the isomorphism $H^2(\BQ_p(1))\cong \BQ_p$ from
local class field theory. Then precisely we have
$$ <a_1 \psi_1+a_2\psi_2, (b_1,b_2)> = -a_1b_1+a_2b_2 . $$

\vskip 10pt

\noindent{\it Proof of Lemma \ref{lem:aux}.} Write
$c_\sigma=\lambda_{1,\sigma}f_1+\lambda_{2,\sigma}f_2+\lambda_{3,\sigma}f_3$.
As $c$ takes values in $\BX_\st(D)$, we have $\lambda_{2,\sigma},
\lambda_{3,\sigma}\in E$, $\lambda_{1,\sigma}\in U_{1,1}$, and
$N(\lambda_{1,\sigma})=\lambda_{3,\sigma}-\lambda_{2,\sigma}$. This
ensures the existence of $\gamma_{2,1}, \gamma_{2,2},
\gamma_{3,1},\gamma_{3,2}$.

To show that
$\gamma_{2,1}-\gamma_{3,1}=\CL(\gamma_{2,2}-\gamma_{3,2})$, we
define a new filtration $Fil$ on $D$ by
$$ Fil^i (D) = \left\{  \begin{array}{ll} D & \text{ if } i\leq
-1 ,
\\  E (f_2- \CL f_1) \oplus E (f_3+\CL f_1) & \text{ if } i=0 , \\
0 & \text{ if } i\geq 1 .
 \end{array} \right. $$
Then $Fil \approx \Fil$ and $(D, Fil)$ is admissible. Let $W$ be the
semistable representation of $G_{\BQ_p}$ attached to $D_W=(D, Fil)$.

As $Fil\approx\Fil$, by proposition \ref{prop:same-kernel}, $[c]$ is
in the kernel of $ H^1(\BX_\st(D_W))\rightarrow H^1(\BX_\dR(D_W))$.
By the exact sequence
$$ \xymatrix{ H^1(W) \ar[r] & H^1(\BX_\st(D_W)) \ar[r] & H^1(\BX_\dR(D_W)) }
$$ there exists a $1$-cocycle $c^{(1)}: G_{\BQ_p}\rightarrow
W$ such that the image of $[c^{(1)}]$ by $H^1(W)\rightarrow
H^1(\BX_\st(D_W))$ is $[c]$.

Observe that the filtered $E$-$(\varphi,N)$-submodule of $D_W$
generated by $f_1$ (resp. by $f_2 + f_3$) is admissible and thus
comes from an $E$-subrepresentation of $W$, denoted by $W_0$ (resp.
$W'$). Let $W_1$ be the quotient of $W$ by $W'$, $\pi_{W,W_1}$ the
map $W\rightarrow W_1$. Then $W_0$ injects to $W_1$. The image of
$W_0$ in $W_1$ is again denoted by $W_0$ by abuse of notation. The
quotients of $W$ and $W_1$ by $W_0$ are denoted by $T$ and $T_1$
respectively. Then we have the following commutative diagram
\begin{equation}\label{eq:comm-diag-galois-repn}
\xymatrix{   0 \ar[r] & W_0 \ar[r] \ar@{=}[d] & W
\ar[r]\ar[d]^{\pi_{W,W_1}} & T \ar[r]\ar[d] & 0
\\  0 \ar[r] & W_0 \ar[r] & W_1 \ar[r] & T_1 \ar[r] &
0, }\end{equation} where the horizontal lines are exact.

We compute the image of $[c^{(1)}]$ by the map $H^1(W)\rightarrow
H^1(T)$. Note that the action of $G_{\BQ_p}$ on $T$ is trivial. So
we may identify $T$ with $D_T$, the filtered
$E$-$(\varphi,N)$-module attached to $T$. We consider the
commutative diagram
\[ \xymatrix{ H^1(W)\ar[d] \ar[r] & H^1(\BX_\st(D_W))\ar[d] \ar[r] & H^1(\BX_\dR(D_W))\ar[d] \\
H^1(T) \ar[r] & H^1(\BX_\st(D_T))\ar[r] & H^1(\BX_\dR(D_T))}
\] where the horizontal lines are exact. As the image of $[c^{(1)}]$ in
$H^1(\BX_\st(D_W))$ is $[c]$, the image of $[c^{(1)}]$ in
$H^1(\BX_\st(D_T))$ by the map $H^1(W)\rightarrow H^1(T)\rightarrow
H^1(\BX_\st(D_T))$ coincides with the class of the $1$-cocyle
$\sigma \mapsto
\lambda_{2,\sigma}\bar{f}_2+\lambda_{3,\sigma}\bar{f}_3$, where
$\bar{f}_2$ and $\bar{f}_3$ are the images of $f_2$ and $f_3$ in
$D_T$ respectively. As $H^1(T)\rightarrow H^1(\BX_\st(D_T))$ is an
isomorphism by Remark \ref{rem:semi-st-ht-pos}, the image of $[c]$
in $H^1(T)$ coincides with the class of $\sigma \mapsto
\lambda_{2,\sigma}\bar{f}_2+\lambda_{3,\sigma}\bar{f}_3$, where
$\{\bar{f}_2,\bar{f}_3\}$ is considered as an $E$-basis of $T$.

Write $c^{(2)}$ for the $1$-cocycle
$$G_{\BQ_p}\xrightarrow{c^{(1)}} W\rightarrow T\rightarrow T_1.$$ As $T_1$
is the quotient of $T$ by $E (\bar{f}_2+\bar{f}_3)$ we have
$$[c^{(2)}]= [(\lambda_{2}- \lambda_{3})\bar{\bar{f}}_2]  =  [\Big((\gamma_{2,1}
-\gamma_{3,1})\psi_1 +(\gamma_{2,2}-\gamma_{3,2})\psi_2
\Big)\bar{\bar{f}}_2]
$$ where $\bar{\bar{f}}_2$ is the image of $\bar{f}_2$ in $T_1$.

From the diagram (\ref{eq:comm-diag-galois-repn}) we obtain the
following commutative diagram
$$ \xymatrix{ H^1(W) \ar[r]\ar[d]^{\pi_{W,W_1}} & H^1(T) \ar[r]\ar[d] & H^2(W_0)\ar@{=}[d] \\
H^1(W_1)\ar[r] & H^1(T_1) \ar[r] & H^2(W_0)  , } $$ where the
horizontal lines are exact. Note that $T_1$ is isomorphic to $E$,
and $W_0$ is isomorphic to $E(1)$. Being the image of
$[\pi_{W,W_1}(c^{(1)})]$ in $H^1(T_1)$, $[c^{(2)}]$ lies in the
kernel of $H^1(T_1)\rightarrow H^2(W_0)$. As an extension of $E$ by
$E(1)$, $W_1$ corresponds to the element $(1,\CL)\in H^1(E(1))$. So
the map $H^1(T_1)\rightarrow H^2(W_0)=E$ is given by
$$(a\psi_1+b\psi_2)\bar{\bar{f}}_2\mapsto (a\psi+b\psi_2)\cup (1, \CL) =
- a + b\ \CL.$$ This implies that
$\gamma_{2,1}-\gamma_{3,1}=\CL(\gamma_{2,2}-\gamma_{3,2})$. \qed

\subsection{$1$-cocycles with values in $V^*\otimes_EV$ and
$\CL$-invariants}

Let $D$ be a (not necessarily admissible) filtered
$E$-$(\varphi,N)$-module with a refinement $\CF$. Suppose that
$\varphi$ is semisimple on $D$. Let $\alpha_1,\dots, \alpha_n$ be
the ordering of eigenvalues of $\varphi$ and let $k_1, \dots, k_n$
be the ordering of Hodge--Tate weights fixed by $\CF$. Let $s\in\{1,
\dots, n-1\}$ be strongly marked for $\CF$ and $t=t_\CF(s)$. Let
$\{e_1,\dots, e_n\}$ be an $s$-perfect basis for $\CF$.

Let $D^*$ be the filtered $E$-$(\varphi, N)$-module that is the dual
of $D$, $\{e^*_1, \dots, e^*_n\}$ the dual basis of $\{e_1,\dots,
e_n\}$. Let $\check{\CF}$ be the dual refinement of $\CF$. By
Proposition \ref{prop:duality-strong-critical}, $n+1-t$ is strongly
marked for $\check{\CF}$, $t_{\check{\CF}}(n+1-t)=n+1-s$, and
$\{e^*_n, \dots e^*_{s+1}, -e^*_s, e^*_{s-1}, \dots, e^*_1\}$ is
$(n+1-t)$-perfect for $\check{\CF}$.

As $\{e_1, \dots, e_n\}$ is $s$-perfect for $\CF$,
$$(\bigoplus_{i<s}Ee_i)\bigoplus E e_s \bigoplus E e_t $$ is stable
by $\varphi$ and $N$, and let $D_1$ denote this filtered
$E$-$(\varphi,N)$-submodule of $D$;
$$(\bigoplus_{i<s}Ee_i)\bigoplus (\bigoplus_{s<i<t}Ee_i)$$ is also
stable by $\varphi$ and $N$, and let $\overline{D}_2$ be the
quotient of $D$ by this filtered $E$-$(\varphi,N)$-submodule.
Similarly, as $\{e^*_n, \dots e^*_{s+1}, -e^*_s, e^*_{s-1}, \dots,
e^*_1\}$ is $(n+1-t)$-perfect for $\check{\CF}$,
$$ (\bigoplus_{j > t} E e^*_j ) \bigoplus (\bigoplus_{t>j>s}Ee^*_j)
$$ is stable by $\varphi$ and $N$. The quotient of $D^*$ by this
filtered $E$-$(\varphi,N)$-submodule is naturally isomorphic to the
dual of $D_1$, so we write $D^*_1$ for this quotient.

Put $I=\{s\}\cup \{i\in \BZ: t\leq i \leq n\}$ and $J=\{t\}\cup
\{j\in\BZ: 1\leq j\leq s\}$. By abuse of notations, let $e_i$ ($i\in
I$) denote its image in $\overline{D}_2$; similarly let $e^*_j$
($j\in J$) denote its image in $D_1^*$. Let $\mathscr{D}$ be
filtered $E$-$(\varphi,N)$-module $D_{1}^* \otimes_E
\overline{D}_2$. The image of $e^*_j\otimes e_i\in D^*\otimes_E D$
($i\in I, j\in J$) in $\mathscr{D}$ will be denoted by $e^*_j\otimes
e_i$ again since this makes no confusion.

For $e\otimes e^* \in D_{1}\otimes_E
\overline{D}_2^*=\mathscr{D}^*$, let $\pi_{e\otimes e^*}$ be the
$G_{\BQ_p}$-equivariant map
$$\BX_{\st}(\mathscr{D})\rightarrow \BB_{\st,E} \ \ \ x\mapsto
<e\otimes e^*, x>.$$ We write $\pi_{j,i}$ $(i\in I, j\in J)$ for
$\pi_{e_j\otimes e^*_i}$. Then $\pi_{j,i}$ is induced from the
($G_{\BQ_p}$-equivariant) projection map
$$ \BB_{\st,E}\otimes_E \mathscr{D} \rightarrow \BB_{\st,E} , \ \ \sum_{h\in J}\sum_{\ell\in I} b_{h,\ell} e^*_{h}\otimes e_{\ell} \mapsto b_{j,i}.
$$ The morphism $\pi_{j,i}$ ($i\in I, j\in J$) induces a morphism $H^1(\BX_\st(\mathscr{D}))\rightarrow
H^1(\BB_{\st,E})$ again denoted by $\pi_{j,i}$.

Let $\mu_s$ be the minimal integer such that
$N^{\mu_s+1}(e_s^*\otimes e_s)=0$. We define $\mu_t$ similarly. By
Lemma \ref{lem:Bst-proj} (\ref{it:Bst-proj-b}) the image of
$\pi_{s,s}$ is in $U_{\mu_s, 0}$ and the image of $\pi_{t,t}$ is in
$U_{\mu_t,0}$.

\begin{thm}\label{prop:main-use}
Let $c:G_{\BQ_p}\rightarrow \BX_\st(\mathscr{D})$ be a $1$-cocycle.
\begin{enumerate}
\item \label{it:main-use-a} If $\pi_{j,s}([c])=0$ for every $j<s$ and if $\pi_{s,i} ([c])=0$ for every $i\geq t$, then there exist
$x_s\in U_{\mu_s,0}$ and $\gamma_{s,1}$, $\gamma_{s,2}\in E$ such
that
$$\pi_{s,s}(c_\sigma)=\gamma_{s,1} \psi_1(\sigma)+\gamma_{s,2}
\psi_2(\sigma)+ (\sigma-1)x_s .$$
\item \label{it:main-use-b} If $\pi_{j,t}([c])=0$ for every $j\leq s$ and if
$ \pi_{t,i}([c])=0 $ for every $i>t$, then there exist $x_t\in
U_{\mu_t,0}$ and $\gamma_{t,1}$, $\gamma_{t,2}\in E$ such that
$$\pi_{t,t}(c_\sigma)=\gamma_{t,1} \psi_1(\sigma)+\gamma_{t,2}
\psi_2(\sigma)+ (\sigma-1)x_t .$$
\item \label{it:main-use-c}
If the conditions in $(\mathrm{\ref{it:main-use-a}})$ and
$(\mathrm{\ref{it:main-use-b}})$ hold, and if $[c]$ belongs to
$\ker(H^1(\BX_\st(\mathscr{D}))\rightarrow
H^1(\BX_\dR(\mathscr{D})))$,  then
$$ \gamma_{s,1}-\gamma_{t,1} = \CL_{\CF,s}
(\gamma_{s,2}-\gamma_{t,2}),
$$ where $\CL_{\CF,s}$ is the $\CL$-invariant defined in Definition \ref{defn:Fontaine-Mazur}. \end{enumerate}
\end{thm}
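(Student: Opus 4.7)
\textbf{Parts (a) and (b)} have parallel proofs. Each reduces, via Proposition~\ref{prop:isom}, to showing $N(\pi_{s,s}([c]))=0$ (resp.\ $N(\pi_{t,t}([c]))=0$) in $H^1(\BB_{\st,E})$: once this holds, the proposition identifies $\pi_{s,s}([c])\in H^1(U_{\mu_s,0})$ (resp.\ $\pi_{t,t}([c])\in H^1(U_{\mu_t,0})$) with a class in $H^1(E)=E[\psi_1]\oplus E[\psi_2]$, which is precisely the desired decomposition modulo coboundary (note $\mu_s,\mu_t\geq 1$ since $Ne_t=e_s\neq 0$). The needed vanishing follows from the identity $N\circ\pi_e=\pi_{Ne}$ of Lemma~\ref{lem:Bst-proj}(a) applied to the $s$-perfect basis structure: in $\mathscr{D}^*=D_1\otimes_E\overline D_2^*$, the relations $Ne_s=0,\ Ne_t=e_s$ in $D_1$ and the dualized relations $Ne_s^*=-e_t^*$, $Ne_t^*=-\sum_{k>t}\lambda_{k,t}e_k^*$ in $\overline D_2^*$ (the other dual summands dying in the quotient) yield
\[
N\pi_{s,s}=-\pi_{s,t}\qquad\text{and}\qquad
N\pi_{t,t}=\pi_{s,t}-\sum_{k>t}\lambda_{k,t}\pi_{t,k},
\]
both of whose summands annihilate $[c]$ under the stated hypotheses of (a), respectively (b).

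\textbf{For (c)}, the plan is to reduce to Lemma~\ref{lem:aux}. The three elements $f_1=e_t^*\otimes e_s$, $f_2=e_s^*\otimes e_s$, $f_3=e_t^*\otimes e_t$ span a $(\varphi,N)$-submodule $M\subset\mathscr{D}$ whose $(\varphi,N)$-structure matches that of Lemma~\ref{lem:aux} exactly: from $\alpha_t=p\alpha_s$ (forced by $N\varphi=p\varphi N$ and $Ne_t=e_s$) one obtains $\varphi$-eigenvalues $(p^{-1},1,1)$, and the computations in (a),(b) give $Nf_1=0$, $Nf_2=-f_1$, $Nf_3=f_1$. Equip $M$ with the filtration prescribed by Lemma~\ref{lem:aux}, with $\CL=\CL_{\CF,s}$:
\[
\widetilde{\Fil}^0 M = E(f_2-\CL_{\CF,s}f_1)\oplus E(f_3+\CL_{\CF,s}f_1),\ \widetilde{\Fil}^{-1}M=M,\ \widetilde{\Fil}^1M=0.
\]
The central step is to verify that this $\widetilde{\Fil}^0$ agrees with the $\Fil^0$ on $M$ induced from the Hom-filtration on $\mathscr{D}=\mathrm{Hom}_E(D_1,\overline D_2)$; this is the content of the identity $\CL=\CL'$ in the perfect $s$-decomposition $\CF_tD/\CF_{s-1}D=E\bar e_s\oplus L\oplus E\bar e_t$ of Definition~\ref{defn:Fontaine-Mazur}, which makes $\bar e_t+\CL_{\CF,s}\bar e_s$ the Hodge jump vector simultaneously inside $E\bar e_s\oplus E\bar e_t\subseteq\CF_tD/\CF_{s-1}D$ (Case~1) and inside its quotient by $L$ (Case~$1'$). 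Granted this, Proposition~\ref{prop:same-kernel} equates the kernels of $H^1(\BX_\st(M))\to H^1(\BX_\dR(M))$ for $\Fil$ and $\widetilde{\Fil}$. One next transports $[c]$ to a class $[c_M]\in H^1(\BX_\st(M))$ in this kernel, using the long exact sequence associated to $0\to\BX_\st(M)\to\BX_\st(\mathscr{D})\to\BX_\st(\mathscr{D}/M)\to 0$, Corollary~\ref{cor:dR-exact}, and all four vanishing hypotheses of (a)+(b) to control the image of $[c]$ in $H^1(\BX_\st(\mathscr{D}/M))$. Applying Lemma~\ref{lem:aux} to $c_M$ with $\CL=\CL_{\CF,s}$ then produces the relation $\gamma_{s,1}-\gamma_{t,1}=\CL_{\CF,s}(\gamma_{s,2}-\gamma_{t,2})$, completing (c).

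The main obstacle is the filtration identification $\Fil^0M=\widetilde{\Fil}^0M$: it requires unwinding the Hom-tensor filtration on $\mathscr{D}$ inside the 4-dimensional subquotient $\mathrm{Hom}_E(E\bar e_s\oplus E\bar e_t,E\bar e_s\oplus E\bar e_t)$ so that the intrinsic $\CL_{\CF,s}$ from the perfect $s$-decomposition appears explicitly as the coefficient defining $\widetilde{\Fil}^0$; this unwinding demands simultaneous tracking of Hodge jump vectors in both $D_1$ and $\overline D_2$. A secondary difficulty is the construction of $c_M$: since $c$ need not take values in $\BX_\st(M)$, one must leverage the ``extra'' vanishing hypotheses ($\pi_{j,s}([c])=0$ for $j<s$ and $\pi_{j,t}([c])=0$ for $j\leq s$), unused in (a) and (b) themselves, to force the obstruction in $H^1(\BX_\st(\mathscr{D}/M))$ into the $\BX_\dR$-kernel so that a lift exists.
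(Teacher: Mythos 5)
Your treatment of parts (a) and (b) is essentially the paper's argument: reduce to Proposition~\ref{prop:isom} via Lemma~\ref{lem:Bst-proj} and the hypothesised vanishing. One small slip: you assert $Ne_s=0$ in $D_1$, but for an $s$-perfect basis one only has $Ne_s\in\bigoplus_{j<s}Ee_j$, so $N\circ\pi_{s,s}$ is a combination of $-\pi_{s,t}$ \emph{and} of the $\pi_{j,s}$ with $j<s$. This is harmless because those extra terms also kill $[c]$ by hypothesis, but you should not claim $N\pi_{s,s}=-\pi_{s,t}$ exactly.

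Part (c) is where there is a genuine gap. You identify the correct target — apply Lemma~\ref{lem:aux} to $M=\mathscr{D}_0=Ef_1\oplus Ef_2\oplus Ef_3$ with $\CL=\CL_{\CF,s}$, after matching filtrations using Proposition~\ref{prop:same-kernel} — and the filtration identification $\Fil^0\mathscr{D}_0=E(f_2-\CL_{\CF,s}f_1)\oplus E(f_3+\CL_{\CF,s}f_1)$ is indeed what the paper proves. But the proposed mechanism for producing a class in $H^1(\BX_\st(M))$, namely lifting $[c]$ along the long exact sequence of $0\to\BX_\st(M)\to\BX_\st(\mathscr{D})\to\BX_\st(\mathscr{D}/M)\to0$, does not work. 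A lift exists only if the image of $[c]$ in $H^1(\BX_\st(\mathscr{D}/M))$ is \emph{zero}; the hypotheses of (a) and (b) only kill some of the projections $\pi_{j,i}([c])$ (for instance they say nothing about $\pi_{j,i}$ with $j<s$ and $i>t$), and in any case vanishing of $\pi_{j,i}([c])$ in $H^1(\BB_{\st,E})$ is weaker than vanishing of the image class in $H^1(\BX_\st(\mathscr{D}/M))$. ``Forcing the obstruction into the $\BX_\dR$-kernel'' does not make it zero and hence does not yield a lift.

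The paper's proof sidesteps lifting entirely. It constructs a \emph{new} cocycle $c'$ valued in $\BX_\st(\mathscr{D}_0)$ directly: by Lemma~\ref{lem:exact-Bcris-BdR} (exactness of $N$ on the $\varphi=p$-eigenspace) one chooses $y\in\BB_{\st,E}^{\varphi=p}$ with $N(y)=x_s-x_t$, and sets
$c'_\sigma=(\pi_{t,s}(c_\sigma)-(\sigma-1)y)\,e^*_t\otimes e_s+(\pi_{s,s}(c_\sigma)-(\sigma-1)x_s)\,e^*_s\otimes e_s+(\pi_{t,t}(c_\sigma)-(\sigma-1)x_t)\,e^*_t\otimes e_t$.
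The built-in coboundary corrections guarantee that $c'$ lands in $\BX_\st(\mathscr{D}_0)$ (one checks $\varphi$-invariance and $N$-vanishing using (\ref{eq:varphi-value}), (\ref{eq:mono-values}) and $N(y)=x_s-x_t$), and that $\tilde\pi_{s,s}([c'])=\tilde\pi_{s,s}([c])$, $\tilde\pi_{t,t}([c'])=\tilde\pi_{t,t}([c])$. Finally, showing $[c']$ lies in $\ker(H^1(\BX_\st(\mathscr{D}_0))\to H^1(\BX_\dR(\mathscr{D}_0)))$ is itself a nontrivial computation: the paper re-expresses $c$ in a filtration-compatible basis $\{g_j\otimes f_i\}$ and isolates the coefficient of $g_t\otimes f_s$, using Lemma~\ref{lem:coh-wellknown} and the vanishing hypotheses to conclude. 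Both the explicit construction of $c'$ and this $\BX_\dR$-kernel verification are missing from your proposal; they are the substance of part (c), and without them the argument does not close.
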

\begin{proof} By Remark \ref{rem:s-perfect-basis} if $i<t$, then $ <Ne^*_s, e_i> =-<e^*_s, N e_i> =0
$. So $Ne^*_s$ is an $E$-linear combination of $e^*_i$ ($i\geq t$).
On the other hand, $Ne_s$ is an $E$-linear combination of $e_j$
$(j<s)$. Thus $N(e_s\otimes e^*_s)$ is an $E$-linear combination of
$e_j \otimes e^*_s$ $(j<s)$ and $e_s\otimes e^*_i$ ($i\geq t$). By
Lemma \ref{lem:Bst-proj} (\ref{it:Bst-proj-a}), $N\circ \pi_{s,s}$
is an $E$-linear combination of $\pi_{s,i}$ ($i\geq t$) and
$\pi_{j,s}$ ($j<s$). If the condition in (\ref{it:main-use-a})
holds, then $\tilde{\pi}_{s,s}([c])\in H^1(U_{\mu_s,0})$ is
contained in $ \ker(H^1(U_{\mu_s,0})\xrightarrow{N}
H^1(\BB_{\st,E}))$.

Similarly, $N(e_t\otimes e^*_t)$ is an $E$-linear combination of
$e_j\otimes e^*_t$ ($j\leq s$) and $e_t\otimes e^*_i$ ($i>t$). So
$N\circ \pi_{t,t}$ is an $E$-linear combination of $\pi_{j,t}$
($j\in J$) and $\pi_{t,i}$ ($i>t$). If the condition in
(\ref{it:main-use-b}) holds, then $\tilde{\pi}_{t,t}([c])\in
H^1(U_{\mu_t,0})$ is contained in
$\ker(H^1(U_{\mu_t,0})\xrightarrow{N} H^1(\BB_{\st,E}))$.

Now (\ref{it:main-use-a}) and (\ref{it:main-use-b}) follow from
Proposition \ref{prop:isom} and the fact that $H^1(E)$ is generated
by $\psi_1$ and $\psi_2$.

Next we prove (\ref{it:main-use-c}).

Let $\mathscr{D}_0$ be the filtered $E$-$(\varphi,N)$-submodule of
$\mathscr{D}$ generated by $e^*_t \otimes e_s$, $e^*_s\otimes e_s$
and $e^*_t\otimes e_t$, and let $\mathscr{D}_1$ be the filtered
$E$-$(\varphi,N)$-submodule of $\mathscr{D}$ generated by
$\mathscr{D}_0$ and $e^*_s\otimes e_t$. As $s$ is strongly marked
for $\CF$, there exist integers $k'_s$ and $k'_t$ satisfying
$k_s\leq k'_s< k'_t \leq k_t$ such that the filtration of the
filtered $E$-$(\varphi,N)$-submodule of $D/\CF_{s-1}D$ spanned by
$e_s$ and $e_t$ is given by
$$ \Fil^i =\left \{ \begin{array}{ll} E e_s \oplus E e_t & \text{ if }i\leq k_s, \\
  E (e_t +\CL_{\CF,s} e_s) & \text{ if }k_s<i\leq k'_t, \\
  0 & \text{ if } i> k'_t,  \end{array}\right. $$ and the filtration
of the filtered $E$-$(\varphi,N)$-submodule of $\overline{D}_2$
spanned by $e_s$ and $e_t$ is given by
$$ \Fil^i =\left \{ \begin{array}{ll} E e_s \oplus E e_t & \text{ if }i\leq k'_s, \\
  E (e_t +\CL_{\CF,s} e_s) & \text{ if }k'_s<i\leq k_t, \\
  0 & \text{ if } i> k_t.  \end{array}\right. $$ The dual
of the former coincides with the filtered
$E$-$(\varphi,N)$-submodule of $D^*_1$ spanned by $e^*_s$ and
$e^*_t$, with the filtration given by $$\Fil^i = \left\{
\begin{array}{ll} E e^*_s \oplus Ee^*_t & \text{ if } i \leq -k'_t \\
E (e^*_s-\CL_{\CF,s} e^*_t) & \text{ if } -k'_t<i\leq -k_s \\ 0 &
\text{ if } i>-k_s.
\end{array}\right.$$
Therefore,
$$ \Fil^0 (\mathscr{D}_1) = E e^*_t \otimes (e_t+\CL_{\CF,s} e_s)
\oplus E (e^*_s-\CL_{\CF,s} e^*_t)\otimes e_s \oplus E
(e^*_s-\CL_{\CF,s} e^*_t) \otimes ( e_t + \CL_{\CF,s} e_s )
$$ and
$$\Fil^0 (\mathscr{D}_0) = E (e^*_t \otimes e_t+\CL_{\CF,s} e^*_t\otimes e_s)
\oplus E (e^*_s\otimes e_s -\CL_{\CF,s} e^*_t\otimes e_s ). $$

We will construct a $1$-cocycle $c': G_{\BQ_p}\rightarrow
\BX_\st(\mathscr{D}_0)$ with $[c']\in
\ker(H^1(\BX_\st(\mathscr{D}_0))\rightarrow
H^1(\BX_\dR(\mathscr{D}_0)))$ such that
$\tilde{\pi}_{s,s}([c'])=\tilde{\pi}_{s,s}([c])$ and
$\tilde{\pi}_{t,t}([c'])=\tilde{\pi}_{t,t}([c])$.

As $c$ takes values in $\BX_\st(\mathscr{D})$, we have
$\varphi(c_\sigma)=c_\sigma$ and $N(c_\sigma)=0$. From
$\varphi(c_\sigma)=c_\sigma$ we obtain
$$ \varphi ( \pi_{j,i}(c_\sigma) ) = \alpha_i^{-1} \alpha_j  \pi_{j,i}(c_\sigma)
$$ for every $i\in I$ and every $j\in J$. In particular, we have
\begin{equation} \label{eq:varphi-value}
\varphi(\pi_{s,s}(c_\sigma))= \pi_{s,s}(c_\sigma), \
\varphi(\pi_{t,t}(c_\sigma)) = \pi_{t,t} (c_\sigma) , \
\varphi(\pi_{t,s}(c_\sigma)) = p \ \pi_{t,s} (c_\sigma).
\end{equation} By Lemma \ref{lem:Bst-proj} if $$ N(e_j\otimes e^*_i)= \sum_{(i',j')\in I\times
J}\lambda_{j',i'} e_{j'}\otimes e^*_{i'} ,$$ then
$$ N( \pi_{j,i}(c_\sigma) ) = \sum_{(i',j')\in I\times J} \lambda_{j',i'} \pi_{j',i'}(c_\sigma) .  $$
Since $N(e_t\otimes e^*_s)=e_s\otimes e^*_s - e_t\otimes e^*_t$, we
have
\begin{equation} \label{eq:mono-values}
N ( \pi_{t,s} (c_\sigma) ) = \pi_{s,s}(c_\sigma) - \pi_{t,t}
(c_\sigma).
\end{equation}

By Lemma \ref{lem:exact-Bcris-BdR} there exists some $y\in
\BB_{\st,E}^{\varphi=p}$ such that $N(y)= x_s-x_t$. As $\varphi$
commutes with $G_{\BQ_p}$, we have $\sigma(y)\in
\BB_{\st,E}^{\varphi=p}$ for every $\sigma\in G_{\BQ_p}$. Let $c'$
be the $1$-cocycle with values in $\BB_{\st,E}\otimes_E
\mathscr{D}_0$ defined by
\begin{eqnarray*}c': \sigma & \mapsto & (\pi_{t,s}(c_\sigma)- (\sigma-1) y ) e^*_t \otimes e_s  \\
&& + ( \pi_{s,s}(c_\sigma) - (\sigma-1)x_s ) e^*_s\otimes e_s + (
\pi_{t,t}(c_\sigma) - (\sigma-1) x_t ) e^*_t\otimes e_t .
\end{eqnarray*}

We show that $c'$ takes values in $\BX_\st(\mathscr{D}_0)$. What we
need to check is that $\varphi(c'_\sigma)=c'_\sigma$ and
$N(c'_\sigma)=0$. By (\ref{it:main-use-a}), (\ref{it:main-use-b})
and the definition of $c'_\sigma$ we have
\begin{equation} \label{eq:pi-a}
\pi_{s,s}(c'_\sigma), \pi_{t,t}(c'_\sigma)\in E \subset
\BB_{\st,E}^{\varphi=1, N=0}.\end{equation}  By
(\ref{eq:varphi-value}), $\pi_{t,s}(c_\sigma)$ is in
$\BB_{\st,E}^{\varphi=p}$. From this and the fact $(\sigma-1)y\in
\BB_{\st,E}^{\varphi=p}$ we get
\begin{equation} \label{eq:pi-b} \pi_{t,s}(c'_\sigma) \in
\BB_{\st,E}^{\varphi=p}.
\end{equation}
From (\ref{eq:mono-values}) and the fact $N(y)=x_s-x_t$ we obtain
\begin{equation} \label{eq:pi-c} N (\pi_{t,s}(c'_\sigma)) =
\pi_{s,s}(c'_\sigma)-\pi_{t,t}(c'_\sigma) . \end{equation}
Equalities (\ref{eq:pi-a}), (\ref{eq:pi-b}) and (\ref{eq:pi-c})
ensure that $\varphi(c'_\sigma)=c'_\sigma $ and $N(c'_\sigma)=0$ for
every $\sigma\in G_{\BQ_p}$.

From the definition of $c'$ we see that
$\tilde{\pi}_{s,s}([c'])=\tilde{\pi}_{s,s}([c])$ and
$\tilde{\pi}_{t,t}([c'])=\tilde{\pi}_{t,t}([c])$. By Lemma
\ref{lem:aux} to finish our proof we only need to show that the
image of $[c']$ in $H^1(\BX_\dR(\mathscr{D}_0))$ is zero.

By Lemma \ref{lem:basis} there exist $a_{i_1, i_2} \in E$
($i_1,i_2\in I$, $i_1>i_2$)  such that $f_i:=e_i+\sum_{i'\in I,
i'<i}a_{i, i'}e_{i'}$ ($i\in I$) form an $E$-basis of
$\overline{D}_2$ compatible with the filtration on $\overline{D}_2$.
Similarly, there exist $b_{j_1, j_2} \in E$ ($j_1,j_2\in J$,
$j_1<j_2$) such that $g_j:=e^*_j+\sum_{j'\in J, j'>j}b_{j,
j'}e^*_{j'}$ ($j\in J$) form an $E$-basis of $D^*_1$ compatible with
the filtration. Then $\{g_j\otimes f_i:i\in I,j\in J\}$ is an
$E$-basis of $\mathscr{D}$ compatible with the filtration. Note that
$a_{t,s}=-b_{s,t}=\CL_{\CF,s}$ and $$ f_s=e_s, \ \ f_t= e_t
+\CL_{\CF,s}e_s, \ \ g_t=e^*_t, \ \ g_s=e^*_s-\CL_{\CF,s}e^*_t. $$
As a consequence, $\{g_t\otimes f_s, g_s\otimes f_s, g_t\otimes
f_t\}$ is an $E$-basis of $\mathscr{D}_0$ compatible with the
filtration.

Conversely, there are $\tilde{a}_{i_1, i_2}$ ($i_1,i_2\in I$,
$i_1>i_2$) and $\tilde{b}_{j_1, j_2} $ ($j_1,j_2\in J$, $j_1<j_2$)
in $E$ such that $e_i=f_i+\sum_{i'\in I, i'<i}\tilde{a}_{i,
i'}f_{i'}$ and $e^*_j=g_j+\sum_{j'\in J, j'>j}\tilde{b}_{j,
j'}g_{j'}$. Note that
$-\tilde{a}_{t,s}=\tilde{b}_{s,t}=\CL_{\CF,s}$.

Expressing $c$ in terms of the basis $\{g_j\otimes f_i: i\in I, j\in
J\}$ we obtain
$$ c = \sum_{i'\in I, j'\in J} (\pi_{j',i'}(c)+ \sum_{i>i'}\tilde{a}_{i,i'}\pi_{j',i}(c)  +
\sum_{j<j'}\tilde{b}_{j,j'}\pi_{j,i'}(c)+\sum_{i>i',
j<j'}\tilde{a}_{i,i'}\tilde{b}_{j,j'}\pi_{j,i}(c)) g_{j'}\otimes
f_{i'}.
$$ In particular, the coefficient of $g_t\otimes f_s$ is
\begin{equation}\label{eq:coef-f-g}
\pi_{t,s}(c)+ \sum_{i\geq t}\tilde{a}_{i,s}\pi_{t,i}(c)  +
\sum_{j\leq s}\tilde{b}_{j,t}\pi_{j,s}(c)+\sum_{i\geq t, j\leq
s}\tilde{a}_{i,s}\tilde{b}_{j,t}\pi_{j,i}(c) . \end{equation} As the
image of $[c]$ in
$H^1(\BB_{\dR,E}\otimes_E\mathscr{D}/\Fil^0(\BB_{\dR,E}\otimes_E\mathscr{D}))$
is zero, the image of the $1$-cocycle (\ref{eq:coef-f-g}) in
$H^1(\BB_{\dR,E}/ \Fil^{k'_t-k'_s} \BB_{\dR,E})$ is zero. As the
images of $\pi_{t,i}(c)$ $(i>t)$, $\pi_{j,s}(c)$ ($j<s$) and
$\pi_{j,i}(c)$ ($i\geq t, j\leq s$) in $H^1(\BB_{\st,E})$ are zero,
their images in $H^1(\BB_{\dR,E}/ \Fil^{k'_t-k'_s} \BB_{\dR,E})$ are
also zero. This implies that the image of the $1$-cocycle
$$ \pi_{t,s}(c) + \tilde{a}_{t,s}\pi_{t,t}(c) +\tilde{b}_{s,t} \pi_{s,s}(c) $$ in $H^1(\BB_{\dR,E}/
\Fil^{k'_t-k'_s} \BB_{\dR,E})$ is zero, and so is the image of the
$1$-cocycle
$$ \pi_{t,s}(c') + \tilde{a}_{t,s}\pi_{t,t}(c') +\tilde{b}_{s,t}
\pi_{s,s}(c'). $$

Now
$$ c' = (\pi_{t,s}(c') + \tilde{a}_{t,s}\pi_{t,t}(c') +
\tilde{b}_{s,t} \pi_{s,s}(c')) g_t\otimes f_s + \pi_{s,s}(c')
g_s\otimes f_s +  \pi_{t,t}(c') g_t\otimes f_t . $$ Since
$g_s\otimes f_s, g_t\otimes f_t \in \Fil^0 \mathscr{D}_0$, the image
of $[c']$ in
$H^1(\BB_{\dR,E}\otimes_E\mathscr{D}_0/\Fil^0(\BB_{\dR,E}\otimes_E\mathscr{D}_0))$
is zero if and only if the image of the $1$-cocycle $\pi_{t,s}(c') +
\tilde{a}_{t,s}\pi_{t,t}(c') +\tilde{b}_{s,t} \pi_{s,s}(c')$ in
$H^1(\BB_{\dR,E}/ \Fil^{k'_t-k'_s} \BB_{\dR,E})$ is zero, which is
observed above.
\end{proof}

Now let $V$ be a semistable $E$-representation of $G_{\BQ_p}$, $D$
the associated filtered $E$-$(\varphi,N)$-module. Suppose that
$\varphi$ is semisimple on $D$ and let $\CF$ be a refinement on $D$.
Assume that $s\in \{1, \dots, n-1\}$ is strongly marked for $\CF$,
and $t=t_\CF(s)$. Let $\{e_1, \dots, e_n\}$ be an $s$-perfect basis
for $\CF$.

The composition of $V^*\otimes_E V \rightarrow \BX_\st(D^*\otimes_E
D) $ and
$$ \pi_{j,i}: \BB_{\st,E}\otimes_E \mathscr{D} \rightarrow \BB_{\st,E} , \ \ \sum_{h=1}^n\sum_{\ell=1}^n b_{h,\ell} e^*_{h}\otimes e_{\ell} \mapsto
b_{j,i},
$$
is again denoted by $\pi_{j,i}$, which is $G_{\BQ_p}$-equivariant.

\begin{cor}\label{cor:main-use}
Let $c:G_{\BQ_p}\rightarrow V^*\otimes_E V$ be a $1$-cocycle. If
$\pi_{j,i} ([c])=0$ when $j<i$, then there are $x_s,x_t\in
\BB_{\st,E}^{\varphi=1}$,
 and $\gamma_{s,1},
\gamma_{s,2}, \gamma_{t,1},\gamma_{t,2}\in E$ such
that$$\pi_{s,s}(c_\sigma)=\gamma_{ s,1}\psi_1(\sigma)+\gamma_{s,2}
\psi_2(\sigma)+(\sigma-1)x_s$$ and
$$\pi_{t,t}(c_\sigma)=\gamma_{t,1} \psi_1(\sigma)+\gamma_{t,2}
\psi_2(\sigma)+(\sigma-1)x_t .$$
 Furthermore $
\gamma_{s,1}-\gamma_{t,1} = \CL_{\CF,s} ( \gamma_{s,2}-\gamma_{t,2}
) $.
\end{cor}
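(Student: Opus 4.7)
The plan is to reduce Corollary \ref{cor:main-use} directly to the three parts of Theorem \ref{prop:main-use} by transporting the cocycle $c$ into $\BX_\st(\mathscr{D})$. Starting from the natural $G_{\BQ_p}$-equivariant embedding $V^*\otimes_E V\hookrightarrow \BX_\st(D^*\otimes_E D)$ furnished by the short exact sequence (\ref{eq:XstXdR-exact}) applied to the admissible filtered $(\varphi,N)$-module $D^*\otimes_E D$, I would compose with the morphism $\BX_\st(D^*\otimes_E D)\to \BX_\st(\mathscr{D})$ induced by the quotient $D^*\otimes_E D\twoheadrightarrow \mathscr{D}=D_1^*\otimes_E \overline{D}_2$ of filtered $E$-$(\varphi,N)$-modules, and let $c'$ denote the resulting $1$-cocycle $G_{\BQ_p}\to \BX_\st(\mathscr{D})$.

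The first verification is that the projections match up. Because the quotient $D^*\otimes_E D\to \mathscr{D}$ sends the basis vector $e^*_h\otimes e_\ell$ to itself when $(h,\ell)\in J\times I$ and to zero otherwise, the map $\pi_{j,i}$ defined on $\BX_\st(\mathscr{D})$ agrees, after pull-back, with the map $\pi_{j,i}$ defined on $V^*\otimes_E V$ for every $(j,i)\in J\times I$. A short inspection of $I=\{s\}\cup\{t,\ldots,n\}$ and $J=\{t\}\cup\{1,\ldots,s\}$ shows that each pair $(j,i)\in J\times I$ occurring in the hypotheses of parts (a) and (b) of Theorem \ref{prop:main-use} automatically satisfies $j<i$, so the corollary's assumption $\pi_{j,i}([c])=0$ for $j<i$ delivers exactly the vanishings required: $\pi_{j,s}([c'])=0$ for $j<s$, $\pi_{s,i}([c'])=0$ for $i\geq t$, $\pi_{j,t}([c'])=0$ for $j\leq s$, and $\pi_{t,i}([c'])=0$ for $i>t$. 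Parts (a) and (b) then produce the advertised formulas for $\pi_{s,s}(c_\sigma)=\pi_{s,s}(c'_\sigma)$ and $\pi_{t,t}(c_\sigma)=\pi_{t,t}(c'_\sigma)$, with $x_s\in U_{\mu_s,0}$ and $x_t\in U_{\mu_t,0}$, both of which sit inside $\BB_{\st,E}^{\varphi=1}$.

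To invoke part (c) I would check that $[c']$ lies in $\ker(H^1(\BX_\st(\mathscr{D}))\to H^1(\BX_\dR(\mathscr{D})))$. The exact sequence (\ref{eq:XstXdR-exact}) applied to $D^*\otimes_E D$ shows that the image of $H^1(V^*\otimes_E V)\to H^1(\BX_\st(D^*\otimes_E D))$ is contained in the kernel of the map to $H^1(\BX_\dR(D^*\otimes_E D))$; since $D^*\otimes_E D\twoheadrightarrow \mathscr{D}$ is a morphism of filtered $E$-$(\varphi,N)$-modules, the induced commutative square transports this kernel membership to $[c']$. Part (c) of Theorem \ref{prop:main-use} then delivers the relation $\gamma_{s,1}-\gamma_{t,1}=\CL_{\CF,s}(\gamma_{s,2}-\gamma_{t,2})$.

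I do not foresee a serious obstacle here: all the analytic work is encapsulated in Theorem \ref{prop:main-use}. The one point that demands care is the combinatorial matching between the full index range $\{1,\ldots,n\}$ on which the hypothesis $\pi_{j,i}([c])=0$ is imposed and the restricted index sets $I,J$ on which the Theorem's hypotheses are phrased; the particular shape of $I$ and $J$ dictated by the strongly critical pair $s<t$ is precisely what guarantees that every pair needed by the Theorem is covered by the corollary's weaker-looking assumption.
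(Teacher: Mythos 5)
Your proof is correct and follows essentially the same route as the paper: pass from $V^*\otimes_E V$ into $\BX_\st(D^*\otimes_E D)$ via the exact sequence (\ref{eq:XstXdR-exact}), push forward to $\BX_\st(\mathscr{D})$ along the quotient $D^*\otimes_E D\twoheadrightarrow\mathscr{D}$, verify the kernel membership through the commutative square, and invoke Theorem \ref{prop:main-use}. The explicit check that the index sets $I,J$ make the hypothesis ``$\pi_{j,i}([c])=0$ for $j<i$'' cover all of the vanishings required by parts (a) and (b) is a useful unpacking, but it is the same argument the paper compresses into ``Hence the assertion follows from Theorem \ref{prop:main-use}.''
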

\begin{proof} We form the quotient $\mathscr{D}$ of
$D^*\otimes_ED$ as at the beginning of this subsection. Then we have
the following commutative diagram
\[ \xymatrix{
H^1(V^*\otimes_E V) \ar[r] & H^1(\BX_\st(D^*\otimes_E D)) \ar[r]\ar[d] & H^1(\BX_\dR(D^*\otimes_E D ))  \ar[d] \\
& H^1(\BX_\st(\mathscr{D})) \ar[r] & H^1(\BX_\dR(\mathscr{D})) }
\] where the upper horizontal line is exact, which implies that the image of $[c]$ in
$H^1(\BX_\st(\mathscr{D}))$ belongs to
$\ker(H^1(\BX_\st(\mathscr{D}))\rightarrow
H^1(\BX_\dR(\mathscr{D})))$. Hence the assertion follows from
Theorem \ref{prop:main-use}.
\end{proof}

\section{Projection vanishing property} \label{sec:aux-res}

We will attach to every infinitesimal deformation of a
representation of $G_{\BQ_p}$ i.e. an $S$-representation of
$G_{\BQ_p}$ a $1$-cocycle, and show that, when the
$S$-representation admits a triangulation and the residue
representation is semistable, the corresponding $1$-cocycle has the
projection vanishing property. Here, $S=E[Z]/(Z^2)$.

Let $\CV$ be an $S$-representation of $G_{\BQ_p}$,
$\CM=\BD_\rig(\CV)$. Suppose that $\CM$ admits a triangulation
$\Fil_\bullet$. Let $(\delta_{1}, \dots, \delta_{n})$ be the
corresponding triangulation data.

Let $z$ be the closed point defined by the maximal ideal $(Z)$ of
$S$. Suppose that $\CV_z$, the evaluation of $\CV$ at $z$, is
semistable, and let $D_z$ be the filtered $E$-$(\varphi,N)$-module
attached to $\CV_z$. Suppose that $\varphi$ is semisimple on $D_z$.
Let $\CF$ be the refinement of $D_z$ corresponding to the induced
triangulation of $\CM_z$. Let $\{ e_{1,z},\dots, e_{n,z} \}$ be an
ordered basis of $D_z$ perfect for $\CF$. Write
$\varphi(e_{i,z})=\alpha_{i,z}e_{i,z}$.

For $i=1,\dots, n$ there exists a continuous additive character
$\epsilon_i$ of $\BQ_p^\times$ with values in $E$ such that
$\delta_i = \delta_{i,z} (1+Z \epsilon_i)$.  By identifying $\Gamma$
with $\BZ_p^\times$ via $\chi_{\mathrm{cyc}}$ we consider
$\epsilon_i|_{\BZ_p^\times}$ as a character of $\Gamma$ or a
character of $G_{\BQ_p}$ that factors through $\Gamma$, denoted by
$\epsilon'_i$.

Fix an $S$-basis $\{v_1,\dots, v_n\}$ of $\CV$, and write the matrix
of $\sigma\in G_{\BQ_p}$ for this basis, $B_\sigma$, in the form
\begin{equation} \label{eq:AB-sigma}
B_\sigma = (I_n + Z U_\sigma ) A_\sigma
\end{equation}
with $A_\sigma\in \GL_n(E)$ and $U_\sigma\in \mathrm{M}_n(E)$. Then
$\{v_{1,z},\dots, v_{n,z}\}$ is an $E$-basis of $\CV_z$, and
$A_\sigma$ is the matrix of $\sigma$ for this basis. For every
$\sigma\in G_{\BQ_p}$ put $$ c_\sigma = \sum_{i,j} (U_{\sigma})_{ij}
v_{j,z}^* \otimes  v_{i,z} . $$

\begin{lem}
$\sigma\mapsto c_\sigma$ is a $1$-cocycle of $G_{\BQ_p}$ with values
in $ \CV_z^*\otimes_E \CV_z $.
\end{lem}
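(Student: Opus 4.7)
The plan is to exploit the fact that $\sigma\mapsto B_\sigma$ is a group homomorphism into $\GL_n(S)$, where $G_{\BQ_p}$ acts trivially on $S=E[Z]/(Z^2)$ (since $S$ is a base $E$-algebra). First I would write out the multiplicativity relation $B_{\sigma\tau}=B_\sigma B_\tau$ and substitute the decomposition (\ref{eq:AB-sigma}), then expand modulo $Z^2$.

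Explicitly, $(I_n+ZU_{\sigma\tau})A_{\sigma\tau}=(I_n+ZU_\sigma)A_\sigma(I_n+ZU_\tau)A_\tau$. Comparing the constant-in-$Z$ parts gives $A_{\sigma\tau}=A_\sigma A_\tau$, so $\sigma\mapsto A_\sigma$ is the matrix of the residual representation $\CV_z$, as it should be. Comparing the $Z$-linear parts (and cancelling $A_{\sigma\tau}^{-1}=A_\tau^{-1}A_\sigma^{-1}$ on the right) yields
$$ U_{\sigma\tau}=U_\sigma+A_\sigma U_\tau A_\sigma^{-1}. $$
This is exactly the 1-cocycle condition for a continuous map $\sigma\mapsto U_\sigma$ from $G_{\BQ_p}$ to $\mathrm{M}_n(E)$, where the latter is endowed with the $G_{\BQ_p}$-action by conjugation $M\mapsto A_\sigma M A_\sigma^{-1}$.

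The remaining step is to identify $\mathrm{M}_n(E)$ with $\CV_z^*\otimes_E\CV_z$ and transport the cocycle condition. Using the dual basis $\{v_{1,z}^*,\ldots,v_{n,z}^*\}$, the isomorphism $v_{j,z}^*\otimes v_{i,z}\leftrightarrow E_{ij}$ sends $U_\sigma$ to $c_\sigma$. A short calculation with the dual basis (from $\sigma(v_{i,z})=\sum_k(A_\sigma)_{ki}v_{k,z}$ one gets $\sigma(v_{j,z}^*)=\sum_k(A_\sigma^{-1})_{jk}v_{k,z}^*$) shows that the diagonal Galois action on $\CV_z^*\otimes_E\CV_z$ corresponds precisely to matrix conjugation by $A_\sigma$. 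Continuity of $\sigma\mapsto c_\sigma$ is inherited from the continuity of the $S$-representation $\CV$. There is no real obstacle here; the only care needed is in bookkeeping the conventions for the dual basis, which is a routine check.
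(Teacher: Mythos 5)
Your proposal is correct and is essentially the paper's own argument: the paper likewise deduces $U_{\sigma\tau}=U_\sigma+A_\sigma U_\tau A_\sigma^{-1}$ from equation (\ref{eq:AB-sigma}) and then verifies $c_{\sigma\tau}=c_\sigma+c_\tau^\sigma$ by the same dual-basis bookkeeping, written out entrywise. You merely make two steps the paper leaves implicit a bit more explicit (deriving the matrix cocycle relation from $B_{\sigma\tau}=B_\sigma B_\tau$, and phrasing the transport as an identification of $\mathrm{M}_n(E)$ with $\CV_z^*\otimes_E\CV_z$ intertwining conjugation with the diagonal Galois action).
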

\begin{proof} From (\ref{eq:AB-sigma}) we obtain $U_{\sigma \tau} = U_\sigma+ A_\sigma U_\tau
A_\sigma^{-1}$. In other words, for every $i,j\in \{1,\dots, n\}$,
$$ (U_{\sigma\tau})_{ij} = (U_\sigma)_{ ij} + \sum_{h,\ell} (A_\sigma)_{ ih} (U_\tau)_{ h\ell} (A^{-1}_\sigma)_{\ell j}.
$$ Hence {\allowdisplaybreaks
$$\begin{aligned} c_{\sigma\tau} & =  \sum_{i,j}  (U_{\sigma\tau})_{
ij}
v_{j,z}^*\otimes v_{i,z} \\
&= \sum _{i,j} \Big( (U_\sigma)_{ij} + \sum_{h,\ell} (A_\sigma)_{ih}
(U_\tau)_{h\ell} (A^{-1}_\sigma)_{\ell j} \Big) v_{j,z}^* \otimes v_{i,z} \\
&= \sum_{i,j} (U_\sigma)_{ij} v_{j,z}^*\otimes v_{i,z} +
\sum_{h\ell} (U_\tau)_{h\ell}
\Big( \sum_j ( A^{-1}_\sigma )_{\ell j} v_{j,z}^* \Big)\otimes\Big(\sum_i(A_\sigma)_{ih}v_{i,z}\Big) \\
&= c_\sigma + \sum_{h\ell} (U_\tau)_{h\ell}
(v_{\ell,z}^*)^\sigma \otimes (v_{h,z})^\sigma  \\
&= c_{\sigma} + c_\tau^\sigma,
\end{aligned}$$}

\noindent as desired.
\end{proof}

Let $x_{ij}\in \BB_{\st,E}$ ($i,j=1,\dots, n$) be such that
\begin{equation}\label{eq:relation-e-and-v}
e_{j,z} = x_{1j} v_{1,z} +\cdots + x_{nj} v_{n,z} .   \end{equation}
Then $X=(x_{ij})$ is in $\GL_n(\BB_{\st,E})$. As $e_{1, z},\dots,
e_{n,z}$ are fixed by $G_{\BQ_p}$, we have $X^{-1}A_\sigma
\sigma(X)=I_n$ for all $\sigma\in G_{\BQ_p}$. For $j=1,\dots, n$ put
$e_j = x_{1j} v_1 +\cdots + x_{nj}  v_n$. Then $\{e_1,\dots, e_n\}$
is a basis of $\BB_{\st,E}\otimes_E \CV$ over $\BB_{\st,E}\otimes_E
S$. (Note that $\BB_{\st,E}\widehat{\otimes}_E
S=\BB_{\st,E}\otimes_E S$ and $\BB_{\st,E}\widehat{\otimes}_E
\CV=\BB_{\st,E}\otimes_E \CV$.)

\begin{lem} \label{lem:trivial-useful} For $i=1,\dots, n$ we have
$ \varphi(e_i)=\alpha_{i,z}e_i.$
\end{lem}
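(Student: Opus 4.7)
The plan is to reduce the claim to an eigenvalue identity for the matrix entries $x_{ij}$, then observe that the same $x_{ij}$ define both $e_{j,z}$ and $e_j$.

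First, I would unwind what $\varphi$ means on the two sides. Since $\CV$ is merely a Galois representation (no intrinsic $\varphi$-action), the Frobenius on $\BB_{\st,E}\otimes_E \CV$ is $\varphi\otimes 1$, and likewise on $\BB_{\st,E}\otimes_E \CV_z$. Applying $\varphi\otimes 1$ to the defining relation $e_{j,z} = \sum_i x_{ij} v_{i,z}$ and comparing with the eigenvalue relation $\varphi(e_{j,z}) = \alpha_{j,z} e_{j,z}$ from the perfectness of $\{e_{1,z},\ldots,e_{n,z}\}$ for $\CF$, I get
\[
\sum_i \varphi(x_{ij}) v_{i,z} \;=\; \alpha_{j,z} \sum_i x_{ij} v_{i,z}.
\]

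Next, I would use that $\{v_{1,z},\ldots,v_{n,z}\}$ is an $E$-basis of $\CV_z$, hence a $\BB_{\st,E}$-basis of $\BB_{\st,E}\otimes_E \CV_z$. Matching coefficients in the displayed equation yields the scalar identities
\[
\varphi(x_{ij}) \;=\; \alpha_{j,z}\, x_{ij} \qquad \text{for all } i,j \in \{1,\dots,n\}.
\]

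Finally, since $e_j = \sum_i x_{ij} v_i$ uses the very same scalars $x_{ij}\in \BB_{\st,E}$ (now paired with the chosen lift $v_i$ of $v_{i,z}$), applying $\varphi\otimes 1$ gives
\[
\varphi(e_j) \;=\; \sum_i \varphi(x_{ij})\, v_i \;=\; \alpha_{j,z} \sum_i x_{ij}\, v_i \;=\; \alpha_{j,z}\, e_j,
\]
which is the claim. There is no real obstacle here: the lemma is a direct transfer of an eigenvalue relation from $\CV_z$ up to $\CV$, enabled by the fact that the isomorphism $\CV/Z\CV \cong \CV_z$ allows the same coefficients $x_{ij}$ to be reused after lifting the basis.
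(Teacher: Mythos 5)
Your proof is correct and follows exactly the paper's own argument: extract $\varphi(x_{ij}) = \alpha_{j,z}x_{ij}$ from the eigenvalue relation on $e_{j,z}$ by reading off coefficients in the $\{v_{i,z}\}$-basis, then reuse these identities for $e_j = \sum_i x_{ij} v_i$. Nothing is missing.
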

\begin{proof} As $v_{1,z},\dots, v_{n,z}$ are fixed by $\varphi$, from
$\varphi(e_{j,z})=\alpha_{j,z}e_{j,z}$ ($i=1,\dots, n$) we obtain
$\varphi(x_{ij})=\alpha_{j,z}x_{ij}$ for every $j$. Thus
$\varphi(e_j)=\sum\limits_i \varphi(x_{ij})v_i=
\sum\limits_i\alpha_{j,z}x_{ij}v_i=\alpha_{j,z}e_j$.
\end{proof}

The matrix of $\sigma$ for the basis $\{e_1,\dots, e_n\}$ is
$$ X^{-1}B_\sigma \sigma(X) = I_n + Z  X^{-1}U_\sigma X . $$
A simple computation shows that
$$ c_\sigma =\sum_{i,j} (X^{-1}U_\sigma X)_{ij} e^*_{j,z} \otimes
e_{i,z}.
$$ Let $\pi_{h \ell}$ be the projection
\begin{equation}\label{eq:proj}
\BB_{\st,E}\otimes_E(\CV_z \otimes_E \CV_z^*)\rightarrow \BB_{\st,E}
, \ \ \ \sum_{j,i} b_{ij}e^*_{j,z}\otimes e_{i,z} \mapsto b_{h \ell}
.
\end{equation}

\begin{lem}\label{lem:middle-step}
Let $\delta'_i$ be the character $1+Z\epsilon'_i$. Then for
$h=1,\dots, n$ there is an element in $$[\BB_{\cris,
E}^{\varphi=\prod_{i=1}^h (\alpha_{i,z} (1+
Z\epsilon_i(p)))}\otimes_E(\wedge^h
\CV)({\delta'_1}^{-1}\cdots{\delta'_h}^{-1})]^{G_{\BQ_p}}$$ denoted
by $g_{1,\dots, h}$, whose image in $\BB_{\st,E}\otimes_E
\wedge^{h}\CV_z$ is exactly $ e_{1,z}\wedge \cdots \wedge e_{h,z}$.
\end{lem}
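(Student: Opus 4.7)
The plan is to pass to the $h$-th exterior power of the triangulation and then twist the resulting rank-$1$ generator by an appropriate power of Fontaine's $t_\cyc$.

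By the definition of triangulation, $\wedge^h\Fil_h\CM\subset\wedge^h\CM=\BD_\rig(\wedge^h\CV)$ is a rank-$1$ sub-$(\varphi,\Gamma)$-module over $\CR_S$, and the successive quotients $\CR_S(\delta_i)$ determine the determinant character, giving $\wedge^h\Fil_h\CM\cong\CR_S(\delta_1\cdots\delta_h)$. Let $\tilde{e}$ be a generator, so $\varphi(\tilde{e})=\prod_i\delta_i(p)\,\tilde{e}$ and $\gamma(\tilde{e})=\prod_i\delta_i(\chi_\cyc(\gamma))\,\tilde{e}$ for $\gamma\in\Gamma$. I normalize $\tilde{e}$ so that, under Berger's dictionary identifying $\Fil_\bullet\CM_z$ with the refinement $\CF$ of $D_z$, its reduction modulo $Z$ corresponds—after the standard Hodge twist by a power of $t_\cyc$—to the distinguished generator $e_{1,z}\wedge\cdots\wedge e_{h,z}$ of $\wedge^h\CF_hD_z$.

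Next I set $g_{1,\ldots,h}:=t_\cyc^{k_{1,z}+\cdots+k_{h,z}}\,\tilde{e}$. Using $\varphi(t_\cyc)=pt_\cyc$ and $\gamma(t_\cyc)=\chi_\cyc(\gamma)t_\cyc$, together with the formulas $\delta_i(p)=\alpha_{i,z}p^{-k_{i,z}}(1+Z\epsilon_i(p))$ and $\delta_i(u)=u^{-k_{i,z}}(1+Z\epsilon_i(u))$ for $u\in\BZ_p^\times$ (which come from Proposition~\ref{prop:triang-refine} applied to $\delta_i=\delta_{i,z}(1+Z\epsilon_i)$), the factors $p^{-k_{i,z}}$ and $u^{-k_{i,z}}$ cancel exactly against the $t_\cyc^{\sum k_{i,z}}$ twist. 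This yields
\[
\varphi(g_{1,\ldots,h})=\prod_{i=1}^{h}\alpha_{i,z}(1+Z\epsilon_i(p))\,g_{1,\ldots,h},\qquad \gamma(g_{1,\ldots,h})=\prod_{i=1}^{h}\delta'_i(\gamma)\,g_{1,\ldots,h}\quad(\gamma\in\Gamma).
\]
Since both $\tilde{e}\in\BD_\rig(\wedge^h\CV)$ and $t_\cyc$ are invariant under $\ker(\chi_\cyc)$, the $\Gamma$-identity extends to all $\sigma\in G_{\BQ_p}$, giving precisely the $G_{\BQ_p}$-invariance of $g_{1,\ldots,h}$ after twisting $\wedge^h\CV$ by $(\delta'_1\cdots\delta'_h)^{-1}$.

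The main obstacle is confirming that $g_{1,\ldots,h}$ actually sits inside $\BB_{\cris,E}\otimes_E\wedge^h\CV$ rather than in some larger period ring containing $\BD_\rig$. The key input is that the specialization $e_{1,z}\wedge\cdots\wedge e_{h,z}$ is already a crystalline period: since $\CF$ is $N$-stable we have $N(e_{j,z})\in\bigoplus_{i<j}Ee_{i,z}$, so the Leibniz rule forces $N(e_{1,z}\wedge\cdots\wedge e_{h,z})=0$ (every term in the expansion carries a repeated $e_{i,z}$), placing this element in $\BD_\cris(\wedge^h\CV_z)\subset\BB_{\cris,E}\otimes_E\wedge^h\CV_z$. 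Because $\wedge^h\Fil_h\CM$ has rank $1$ it is automatically $N$-free and thus corresponds to a crystalline object in Berger's dictionary, so the $t_\cyc^{\sum k_{i,z}}$-twist of $\tilde{e}$ realizes a period in the family, and its reduction modulo $Z$ recovers $e_{1,z}\wedge\cdots\wedge e_{h,z}$ by the normalization made above. The delicate technical point is justifying this crystalline containment over $S$ rather than just at $z$, which is likely to require a careful cohomological lifting argument via Berger's family version of the comparison theorem, perhaps phrased as the vanishing of an obstruction in an $H^1$ supplied by the triangulation.
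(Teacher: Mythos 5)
Your overall strategy---take the $h$-th exterior power of the triangulation and twist by a power of $t_\cyc$ to kill the Hodge part of the $\Gamma$-action---is the same as the paper's. The gap, which you correctly flag but do not close, is the crystalline containment: you need $g_{1,\ldots,h}$ to sit in $\BB_{\cris,E}\otimes_E\wedge^h\CV$ over all of $S$, not merely after reduction at $z$. Your sketch for this step has two defects. First, the claim that ``$\wedge^h\Fil_h\CM$ has rank $1$, hence is $N$-free, hence corresponds to a crystalline object'' is false in general---a rank-one $(\varphi,\Gamma)$-module $\CR(\delta)$ need not be crystalline, or even de Rham, and in any case the thing you want to conclude (crystallinity of the period over $S$) is exactly what is in question, so the reasoning is circular. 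Second, your fallback---obtain the crystalline period at $z$ by the Leibniz-rule $N$-vanishing, then ``lift'' to $S$ via some cohomological obstruction argument---has no content as written: $\CV$ is not assumed semistable away from $z$, so there is no filtered $(\varphi,N)$-module over $S$ inside which to do a lifting, and no candidate $H^1$ is identified.

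The paper closes this with no lifting at all. The input you are missing is Berger's description of $\BD_\cris$ purely in terms of the $(\varphi,\Gamma)$-module (\cite[Proposition~3.7]{Be}): for a representation $W$ one has
\[
\BD_\cris(W)=\bigl[\BD^\dagger_\rig(W)[1/t_\cyc]\bigr]^{\Gamma=1},
\]
and this applies over the artinian base $S$ just as well as over $E$. Applying it to $W=(\wedge^h\CV)({\delta'_1}^{-1}\cdots{\delta'_h}^{-1})$, the element $t_\cyc^{\sum f_{i,z}}\,m_1\wedge\cdots\wedge m_h$ is, by your own $\Gamma$- and $\varphi$-computations, a $\Gamma$-fixed vector with the stated $\varphi$-eigenvalue in the $t_\cyc$-inverted twisted module; hence it lies \emph{by definition} in the corresponding $\varphi$-eigenspace of $\BD_\cris(W)$, which by Fontaine's adjunction equals $\bigl[\BB_{\cris,E}^{\varphi=\prod\alpha_{i,z}(1+Z\epsilon_i(p))}\otimes_E W\bigr]^{G_{\BQ_p}}$. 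There is nothing to lift and no obstruction to kill. The same formula, applied at $z$, is also what lets the paper match the reduction of $g_{1,\ldots,h}$ with $e_{1,z}\wedge\cdots\wedge e_{h,z}$---your separate $N$-vanishing argument for this reduction is correct but is not needed once Berger's formula is in hand. Finally, the phrase ``$\tilde{e}\in\BD_\rig(\wedge^h\CV)$ is invariant under $\ker(\chi_\cyc)$'' is not meaningful as stated: an element of a Robba-ring $(\varphi,\Gamma)$-module carries only a $\Gamma$-action, and the passage to a $G_{\BQ_p}$-action is precisely the content of the comparison isomorphism that Berger's proposition supplies.
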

\begin{proof} Put $f_i=w_{\delta_i}$.  By Proposition
\ref{prop:triang-refine} we have $\alpha_{i, z}=\delta_{i,z}(p)
p^{f_{i,z}}$ and $\delta_{i,z}(x) = x^{f_{i,z}}$ for every $x\in
\BZ_p^\times$.

For $i=1,\dots, n$ let $m_i$ be a nonzero element in $\Fil_i\CM$
such that
$$\varphi(m_i)\equiv \delta_i(p)m_i \mod \Fil_{i-1}\CM$$ and $$
\gamma(m_i)\equiv \delta_i(\chi_\cyc(\gamma))m_i \mod
\Fil_{i-1}\CM$$ for every $\gamma\in \Gamma.$ Then $m_1\wedge \cdots
\wedge m_h$ is a nonzero element in $$ (\wedge^h
\CM)^{\varphi=(\delta_1\cdots\delta_h)(p),
\Gamma=(\delta_1\cdots\delta_h)|_{\BZ_p^\times}} .$$ Considered as
an element in $(\wedge^h
\CM)({\delta'_1}^{-1}\cdots{\delta'_h}^{-1})[\frac{1}{t_\cyc}]$,
$t_{\cyc}^{f_{1,z}+\cdots + f_{h,z}}m_1\wedge \cdots \wedge m_h$  is
in
\begin{eqnarray*} && [(\wedge^h
\CM)({\delta'_1}^{-1}\cdots{\delta'_h}^{-1})[\frac{1}{t_\cyc}]]^{\varphi
= \prod_{i=1}^h (\alpha_{i,z} (1+ Z \epsilon_i(p))),\Gamma=1} \\ & =
& \BD_{\cris}((\wedge^h
\CV)({\delta'_1}^{-1}\cdots{\delta'_h}^{-1}))^{\varphi=\prod_{i=1}^h
(\alpha_{i,z} (1+ Z\epsilon_i(p)))} \\
&=& [\BB_{\cris, E}^{\varphi=\prod_{i=1}^h (\alpha_{i,z} (1+
Z\epsilon_i(p)))}\otimes_E(\wedge^h
\CV)({\delta'_1}^{-1}\cdots{\delta'_h}^{-1})]^{G_{\BQ_p}},
\end{eqnarray*} where the first equality follows from \cite[Proposition 3.7]{Be} and the second is
obvious.

Let $\BB_{\log,\BQ_p}^\dagger$ be the ring used in \cite{Be}. As the
refinement corresponding to $\Fil_{\bullet,z}$ is $\CF$, we have
$$ [(\BB_{\log, \BQ_p}^\dagger \otimes_{\BQ_p} E)[\frac{1}{t_\cyc}]\otimes_{\CR_E} ( \Fil_{i,z}\CM_z)]^{\Gamma=1} =
\CF_i
D_z.
$$ Since the image of $t_\cyc^{f_{i,z}}m_{i,z}$ in
$\CR_E(\delta_i)[\frac{1}{t_\cyc}]$ is fixed by $\Gamma$, we have
$$ e_{i,z} \equiv t_\cyc^{f_{i,z}}m_{i,z} \mod (\BB_{\log, \BQ_p}^\dagger \otimes_{\BQ_p} E)[\frac{1}{t_\cyc}]\otimes_{\CR_E} \Fil_{i-1,z}\CM_z
$$ up to a nonzero scalar. This implies that $t_{\cyc}^{f_{1,z}+\cdots + f_{h,z}}m_{1}\wedge \cdots \wedge m_{h} \ \mathrm{mod} \ Z$ coincides
with $e_{1,z}\wedge \cdots \wedge e_{h,z}$ up to a nonzero scalar.
\end{proof}

\begin{thm} \label{prop:middle-step}
\begin{enumerate}
\item\label{it:middle-step-a} For every pair of integers $(h,\ell)$ such that $h<\ell$ we have
$\pi_{h \ell}([c])=0$.
\item\label{it:middle-step-b} For every $h=1,\dots, n$,
$\pi_{h,h}([c])$ coincides with the image of $[\epsilon'_h]$ in
$H^1(\BB_{\st,E})$.
\end{enumerate}
\end{thm}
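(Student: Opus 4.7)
The plan is to apply Lemma \ref{lem:middle-step} for each $h\in\{1,\ldots,n\}$ and to convert the twisted Galois invariance of $g_{1,\ldots,h}$ into an explicit cohomological identity in $\BB_{\st,E}$. Since $g_{1,\ldots,h}$ and $e_1\wedge\cdots\wedge e_h$ both reduce modulo $Z$ to $e_{1,z}\wedge\cdots\wedge e_{h,z}$ inside $\BB_{\st,E}\otimes_E\wedge^h\CV_z$, I will write
$$g_{1,\ldots,h}=e_1\wedge\cdots\wedge e_h+Z\xi_h$$
for some $\xi_h\in\BB_{\st,E}\otimes_E\wedge^h\CV_z$ (the product $Z\xi_h$ only depends on $\xi_h$ modulo $Z$). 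Using the twisted invariance $\sigma(g_{1,\ldots,h})=\prod_{i=1}^h\delta'_i(\sigma)\,g_{1,\ldots,h}=(1+Z\sum_{i=1}^h\epsilon'_i(\sigma))g_{1,\ldots,h}$, expanding modulo $Z^2$, and using that each $e_{i,z}$ is $G_{\BQ_p}$-fixed in $\BB_{\st,E}\otimes\CV_z$, I then get the key identity
$$\sigma(e_1\wedge\cdots\wedge e_h)-e_1\wedge\cdots\wedge e_h=Z\Big[\sum_{i=1}^h\epsilon'_i(\sigma)\,e_{1,z}\wedge\cdots\wedge e_{h,z}-(\sigma-1)\xi_h\Big]$$
in $\BB_{\st,E}\otimes_E\wedge^h\CV_z$.

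Next I expand the left-hand side directly. By Lemma \ref{lem:trivial-useful} and the matrix computation, the matrix of $\sigma$ in the basis $\{e_1,\ldots,e_n\}$ is $I_n+ZX^{-1}U_\sigma X$, giving $\sigma(e_j)=e_j+Z\sum_{k}(X^{-1}U_\sigma X)_{kj}e_{k,z}$. Expanding $\sigma(e_1)\wedge\cdots\wedge\sigma(e_h)$ to first order in $Z$ and decomposing in the standard basis of $\wedge^h\CV_z$, the surviving contributions are: the coefficient $\sum_{j=1}^h(X^{-1}U_\sigma X)_{jj}$ on $e_{1,z}\wedge\cdots\wedge e_{h,z}$, and for each $j\leq h<k$, the coefficient $(-1)^{h-j}(X^{-1}U_\sigma X)_{kj}$ on $e_{1,z}\wedge\cdots\wedge\widehat{e_{j,z}}\wedge\cdots\wedge e_{h,z}\wedge e_{k,z}$; all other wedges vanish because of a repeated factor.

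Comparing coefficients with the key identity basis-by-basis produces two families of cohomological identities in $H^1(\BB_{\st,E})$. From the $e_{1,z}\wedge\cdots\wedge e_{h,z}$-component I get $\sum_{j=1}^h(X^{-1}U_\sigma X)_{jj}\equiv\sum_{i=1}^h\epsilon'_i(\sigma)$ modulo coboundaries; taking the difference between the identities for $h$ and $h-1$ gives $(X^{-1}U_\sigma X)_{hh}\equiv\epsilon'_h(\sigma)$, which translates to $\pi_{h,h}([c])=[\epsilon'_h]$ and proves part (b). From each of the remaining components I get that $(X^{-1}U_\sigma X)_{kj}$ is a coboundary for every pair $j\leq h<k$; for any prescribed pair $h<\ell$, specialising with parameter $h$ and $k=\ell$ yields the vanishing of $\pi_{h,\ell}([c])$ and proves part (a).

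The main subtlety I expect is justifying cleanly the passage from the twisted $G_{\BQ_p}$-invariance of $g_{1,\ldots,h}$ (which lives inside a specific period-ring tensor product) to an explicit equality inside $\BB_{\st,E}\otimes_E\wedge^h\CV_z$. One needs that a choice of lift of $\xi_h$ to $\BB_{\st,E}\otimes_E\wedge^h\CV$ is immaterial (since it is multiplied by $Z$ and $Z^2=0$), and that the $G_{\BQ_p}$-action on $\xi_h$ is realised through the Galois action on its $\BB_{\st,E}$-coefficients alone (since the basis wedges $e_I$ of $\wedge^h\CV_z$ are Galois-fixed inside $\BB_{\st,E}\otimes\wedge^h\CV_z$). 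Once these points are in hand, the remainder is a routine bookkeeping in the exterior algebra.
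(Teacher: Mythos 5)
Your proof is correct and follows essentially the same route as the paper's: invoke Lemma \ref{lem:middle-step}, expand $g_{1,\ldots,h}$ around $e_1\wedge\cdots\wedge e_h$ in the $Z$-direction, use the twisted $G_{\BQ_p}$-invariance together with $\sigma(e_j)=e_j+Z\sum_k(X^{-1}U_\sigma X)_{kj}e_{k,z}$, and compare coefficients in the standard wedge basis — reading off part (b) from the $e_{1,z}\wedge\cdots\wedge e_{h,z}$-component (differencing over $h$) and part (a) from the $e_{1,z}\wedge\cdots\wedge\widehat{e_{j,z}}\wedge\cdots\wedge e_{h,z}\wedge e_{k,z}$-components. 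The only cosmetic difference is that the paper fixes the expansion $\xi_h=\sum_J\lambda_J e_J$ at the outset and isolates the single term $J=\{1,\ldots,h-1,\ell\}$, whereas you keep $\xi_h$ abstract and note the full family $j\leq h<k$; both yield the same identities.
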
 We consider (\ref{it:middle-step-a}) as the projection
vanishing property.
\begin{proof}
Let $g_{1,\dots,h}$ be as in Lemma \ref{lem:middle-step}. Write
\begin{equation} \label{eq:exp-f} g_{1,\dots,h}=e_{1}\wedge
\cdots\wedge e_{h} + Z \sum_{J} \lambda_{J} e_{J}, \end{equation}
where $\lambda_J\in \BB_{\st,E}$ and $J$ runs over all subsets of
$\{1,\dots, n\}$ with cardinal number $h$. Here, if $J=\{j_1 <
\cdots < j_{h}\}$, then $e_J=e_{j_1}\wedge \cdots \wedge e_{j_{h}}$.

As the matrix of $\sigma\in G_{\BQ_p}$ for the basis $\{e_1,\dots,
e_n\}$ is $I_n+ Z X^{-1}U_\sigma X$, we have
$$ \sigma(e_i) = e_i + \sum_{j=1}^n Z (X^{-1}U_\sigma X)_{ji}e_j .$$
Hence
\[\begin{aligned}
g_{1,\dots,h}=\sigma(g_{1,\dots,h}) & = [1 - Z
\epsilon'_{1}(\sigma)- \cdots - Z
\epsilon'_h(\sigma)] \\
& \hskip 15pt \times \Big[  \Big(e_{1} + Z
\sum_{j=1}^n (X^{-1}U_\sigma X)_{j1} e_j\Big)\wedge\cdots  \\
& \hskip 60pt \wedge\Big( e_h + Z \sum_{j=1}^n (X^{-1}U_\sigma
X)_{jh}e_j \Big)   + Z \sum_{J} \sigma(\lambda_{J}) e_{J} \Big].
\end{aligned} \]
For every $\ell=h+1, \dots,n$, comparing the coefficient of $e_1
\wedge \cdots \wedge e_{h-1} \wedge e_\ell$ in the right hand side
of the above equality and the right hand side of (\ref{eq:exp-f}),
we obtain $$ \lambda_{1, \dots, h-1, \ell} =
\sigma(\lambda_{1,\dots, h-1,\ell}) + (X^{-1}U_\sigma X)_{\ell h},
$$ which proves (\ref{it:middle-step-a}).

Similarly, comparing the coefficients of $e_1\wedge \cdots \wedge
e_h$ in the above two expressions for $g_{1,\dots,h}$ we obtain $$
\lambda_{1,\dots,h} = \sigma(\lambda_{1,\dots,h}) + \sum_{i=1}^h
(X^{-1}U_\sigma X)_{ii} -\sum_{i=1}^h \epsilon'_i(\sigma). $$ Thus
we have
\begin{equation}\label{eq:lambda} (X^{-1}U_\sigma X)_{hh}- \epsilon'_h(\sigma)=
(\sigma-1)(\lambda_{1,\dots, h-1}-\lambda_{1,\dots, h}),
\end{equation}
which implies (\ref{it:middle-step-b}).
\end{proof}

\begin{cor} \label{cor:middle-step} For $h=1,\dots, n$, there exist $\gamma_{h,1},\gamma_{h,2}\in E$ and $\xi_h\in
\BB_{\st,E}^{\varphi=1}$ such that for every $\sigma\in G_{\BQ_p}$,
$$ (X^{-1} U_\sigma X)_{hh} = \gamma_{h,1} \psi_1(\sigma) +\gamma_{h,2} \psi_2(\sigma) +
(\sigma-1)\xi_h.
$$
\end{cor}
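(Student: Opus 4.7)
\medskip

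\noindent\textbf{Proof proposal for Corollary \ref{cor:middle-step}.}

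First I would observe that $(X^{-1}U_\sigma X)_{hh}$ automatically lies in $\BB_{\st,E}^{\varphi=1}$. Indeed, $c_\sigma$ is an element of $\CV_z^*\otimes_E \CV_z$, viewed inside $\BB_{\st,E}\otimes_E(\CV_z^*\otimes_E \CV_z)$, so $\varphi(c_\sigma)=c_\sigma$. Expanding $c_\sigma=\sum_{i,j}(X^{-1}U_\sigma X)_{ij}\,e^*_{j,z}\otimes e_{i,z}$ and using Lemma \ref{lem:trivial-useful} (which gives $\varphi(e_{i,z})=\alpha_{i,z}e_{i,z}$ and dually $\varphi(e^*_{j,z})=\alpha_{j,z}^{-1}e^*_{j,z}$), comparison of coefficients yields
$$ \varphi\bigl((X^{-1}U_\sigma X)_{ij}\bigr)=\tfrac{\alpha_{j,z}}{\alpha_{i,z}}(X^{-1}U_\sigma X)_{ij}, $$
and in particular the diagonal entry $(X^{-1}U_\sigma X)_{hh}$ is $\varphi$-fixed.

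The next step is to verify that the class of $(X^{-1}U_\sigma X)_{hh}$ in $H^1(\BB_{\st,E}^{\varphi=1})$ lies in the kernel of the natural map to $H^1(\BB_{\st,E})$ induced by the monodromy $N$. For this I would apply $N$ to the key identity (\ref{eq:lambda}):
$$ (X^{-1}U_\sigma X)_{hh}-\epsilon'_h(\sigma)=(\sigma-1)(\lambda_{1,\cdots,h-1}-\lambda_{1,\cdots,h}). $$
Since $\epsilon'_h(\sigma)\in E\subset \BB_{\st,E}^{N=0}$, applying $N$ kills the middle term, giving
$$ N\bigl((X^{-1}U_\sigma X)_{hh}\bigr)=(\sigma-1)\,N(\lambda_{1,\cdots,h-1}-\lambda_{1,\cdots,h}), $$
which is manifestly a coboundary in $\BB_{\st,E}$.

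The conclusion now follows from the Corollary to Proposition \ref{prop:isom}, which identifies $H^1(E)$ with $\ker\bigl(H^1(\BB_{\st,E}^{\varphi=1})\xrightarrow{N}H^1(\BB_{\st,E})\bigr)$. Combined with the previous step, this tells us that $[(X^{-1}U_\sigma X)_{hh}]$, computed in $H^1(\BB_{\st,E}^{\varphi=1})$, lies in $H^1(E)=E[\psi_1]\oplus E[\psi_2]$. Hence there exist $\gamma_{h,1},\gamma_{h,2}\in E$ and a cochain $\xi_h\in \BB_{\st,E}^{\varphi=1}$ realizing the cohomological identity, namely
$$ (X^{-1}U_\sigma X)_{hh}=\gamma_{h,1}\psi_1(\sigma)+\gamma_{h,2}\psi_2(\sigma)+(\sigma-1)\xi_h, $$
which is exactly the claim.

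The only step that requires any genuine care is the first observation, as the upgrade from ``$(X^{-1}U_\sigma X)_{hh}\in\BB_{\st,E}$'' to ``$\in\BB_{\st,E}^{\varphi=1}$'' is what allows one to bring the Galois-cohomology machinery of Section \ref{sec:gal-coh} to bear; everything after that is a direct plug-in of results already established. In particular, the choice of the auxiliary element $\mu_h=\lambda_{1,\cdots,h-1}-\lambda_{1,\cdots,h}$ coming from the tensor powers $g_{1,\cdots,h}$ constructed in Lemma \ref{lem:middle-step} is the pivot that converts the mod-$Z$ deformation data into the sought-after Galois cohomology class.
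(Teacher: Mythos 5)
Your proof is correct, and it takes a genuinely different (and somewhat more direct) route than the paper's. The paper deduces the corollary from Theorem~\ref{prop:middle-step}~(\ref{it:middle-step-a}) (the projection vanishing property) plus the structural argument from Theorem~\ref{prop:main-use}~(\ref{it:main-use-a})~(\ref{it:main-use-b}): one shows, via Lemma~\ref{lem:Bst-proj}~(\ref{it:Bst-proj-a}) and the fact that the perfect basis is compatible with $\CF$ (so $Ne_{h,z}\in\bigoplus_{j<h}Ee_{j,z}$ and $Ne^*_{h,z}\in\bigoplus_{i>h}Ee^*_{i,z}$), that $N\circ\pi_{h,h}$ is a linear combination of off-diagonal projections, all of which vanish on $[c]$ by part~(\ref{it:middle-step-a}); this places $\tilde\pi_{h,h}([c])$ in $\ker\bigl(H^1\to H^1(\BB_{\st,E})\bigr)$ under $N$, after which Proposition~\ref{prop:isom} finishes. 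You instead read the vanishing of $N\tilde\pi_{h,h}([c])$ directly off the explicit cocycle identity~(\ref{eq:lambda}): since $\epsilon'_h(\sigma)\in E$ is killed by $N$ and $N$ is $G_{\BQ_p}$-equivariant, applying $N$ to~(\ref{eq:lambda}) exhibits $N\bigl((X^{-1}U_\sigma X)_{hh}\bigr)$ as the coboundary of $N(\lambda_{1,\dots,h-1}-\lambda_{1,\dots,h})$. This avoids the filtration-compatibility bookkeeping and the appeal to Theorem~\ref{prop:middle-step}~(\ref{it:middle-step-a}) at the cost of reaching into the internals of the proof of Theorem~\ref{prop:middle-step} for identity~(\ref{eq:lambda}); the paper's route is more modular and fits the ``projection vanishing implies $\CL$-invariant'' theme, while yours is shorter given the explicit formula. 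Your first step --- the $\varphi$-invariance of $(X^{-1}U_\sigma X)_{hh}$ from $\varphi(c_\sigma)=c_\sigma$ and Lemma~\ref{lem:trivial-useful} --- is also correct and is exactly what licenses passing to $H^1(\BB_{\st,E}^{\varphi=1})$ and invoking the Corollary to Proposition~\ref{prop:isom}.
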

\begin{proof} By Theorem \ref{prop:middle-step}
(\ref{it:main-use-a}), $\pi_{j,h}([c])$ ($j<h$) and $\pi_{h,i}([c])$
($i>h$) are zero. Repeating the argument in the proof of Theorem
\ref{prop:main-use} (\ref{it:main-use-a}) (\ref{it:main-use-b})
yields our assertion.
\end{proof}

As $\psi_2$ is an $E$-basis of $\mathrm{Hom}_{\mathrm{cont}}(\Gamma,
E)$, the $E$-vector space of continuous homomorphisms from $\Gamma$
to $E$, there exists $\epsilon_{h,2}\in E$ such that
$\epsilon'_h=\epsilon_{h,2}\psi_2$.

\begin{lem} \label{lem:constant-gamma} We have
$\gamma_{h,1}=-\epsilon_h(p)$ and $\gamma_{h,2}=\epsilon_{h,2}$.
\end{lem}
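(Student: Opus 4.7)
The plan is to refine the proof of Theorem \ref{prop:middle-step} by tracking not only the $G_{\BQ_p}$-action but also the $\varphi$-action on $g_{1,\cdots,h}$. This furnishes a second identity on $\mu:=\lambda_{1,\cdots,h-1}-\lambda_{1,\cdots,h}$ which, combined with (\ref{eq:lambda}) via a standard ``$\varphi$-correction'' element $y_0\in\BB_{\st,E}$, produces the explicit $\psi_1$- and $\psi_2$-coefficients. Concretely, apply $\varphi$ to the expansion $g_{1,\cdots,h}=e_{1}\wedge\cdots\wedge e_{h}+Z\sum_{J}\lambda_{J}e_{J}$; by Lemma \ref{lem:trivial-useful} we have $\varphi(e_J)=\prod_{i\in J}\alpha_{i,z}\cdot e_J$, and Lemma \ref{lem:middle-step} gives the $\varphi$-eigenvalue $\prod_{i=1}^{h}\alpha_{i,z}(1+Z\epsilon_i(p))$ of $g_{1,\cdots,h}$. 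Comparing the coefficients of $e_{1}\wedge\cdots\wedge e_{h}$ yields $\varphi(\lambda_{1,\cdots,h})=\lambda_{1,\cdots,h}+\sum_{i=1}^{h}\epsilon_i(p)$, and telescoping with the analogous identity for $h-1$ gives $(\varphi-1)\mu=-\epsilon_h(p)$.

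The main obstacle is producing an element $y_0\in\BB_{\st,E}$ satisfying $(\varphi-1)y_0=1$ and $(\sigma-1)y_0=\psi_1(\sigma)$ for every $\sigma\in G_{\BQ_p}$. This is a standard fact about Fontaine's period rings: the short exact sequence $0\to\BB_{\st,E}^{\varphi=1}\to\BB_{\st,E}\xrightarrow{\varphi-1}\BB_{\st,E}\to 0$ gives a connecting map $\partial\colon E\to H^1(\BB_{\st,E}^{\varphi=1})$ whose image lies in the kernel of $H^1(\BB_{\st,E}^{\varphi=1})\to H^1(\BB_{\st,E})$. Via the isomorphism $H^1(\BB_{\cris,E}^{\varphi=1})\cong H^1(E)$ coming from the fundamental exact sequence, this kernel equals $E\cdot[\psi_1]$, because the unramified extension $V_{\psi_1}$ is crystalline whereas $V_{\psi_2}$ is not (its restriction to wild inertia is nontrivial). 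Under the Bloch--Kato normalization, $\partial(1)=[\psi_1]$. Adjusting $y_0$ by an element of $\BB_{\st,E}^{\varphi=1}$ (which leaves $(\varphi-1)y_0=1$ unchanged but alters $(\sigma-1)y_0$ by a coboundary) makes the identity $(\sigma-1)y_0=\psi_1(\sigma)$ hold on the nose.

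Setting $\xi_h:=\mu+\epsilon_h(p)\,y_0$, one has $(\varphi-1)\xi_h=-\epsilon_h(p)+\epsilon_h(p)\cdot 1=0$, so $\xi_h\in\BB_{\st,E}^{\varphi=1}$; substituting into (\ref{eq:lambda}) together with $\epsilon'_h=\epsilon_{h,2}\psi_2$ gives
\[
\pi_{h,h}(c_\sigma)=\epsilon_{h,2}\psi_2(\sigma)+(\sigma-1)\mu=-\epsilon_h(p)\psi_1(\sigma)+\epsilon_{h,2}\psi_2(\sigma)+(\sigma-1)\xi_h.
\]
Comparison with the decomposition of Corollary \ref{cor:middle-step} together with the injectivity of $H^1(E)\hookrightarrow H^1(\BB_{\st,E}^{\varphi=1})$ (the Corollary following Proposition \ref{prop:isom}), which forces the $\psi_1,\psi_2$-coefficients in that decomposition to be unique, yields $\gamma_{h,1}=-\epsilon_h(p)$ and $\gamma_{h,2}=\epsilon_{h,2}$.
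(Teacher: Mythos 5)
Your proposal is correct and uses the same two computational engines as the paper's proof (the $\varphi$-eigenvalue comparison on $g_{1,\ldots,h}$ giving $(\varphi-1)\lambda_{1,\ldots,h}=\sum_{i\le h}\epsilon_i(p)$, and an element with $(\varphi-1)y_0=1$, $(\sigma-1)y_0=\psi_1$), but you organize it differently: you build an explicit decomposition $\pi_{h,h}(c_\sigma)=-\epsilon_h(p)\psi_1+\epsilon_{h,2}\psi_2+(\sigma-1)\xi_h$ with $\xi_h\in\BB_{\st,E}^{\varphi=1}$ and then invoke uniqueness via the injectivity of $H^1(E)\hookrightarrow H^1(\BB_{\st,E}^{\varphi=1})$, whereas the paper extracts $\gamma_{h,2}=\epsilon_{h,2}$ first from the non-Hodge--Tate property of $\psi_2$, then applies $\varphi-1$ to a $G_{\BQ_p}$-invariant to read off $\gamma_{h,1}$. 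Your reorganization is arguably cleaner, since the uniqueness statement subsumes the non-Hodge--Tate fact. The one place where you make heavier weather than necessary is the construction of $y_0$: the paper simply takes $\omega\in\mathrm{W}(\overline{\BF}_p)$ with $\varphi(\omega)-\omega=1$, for which $(\sigma-1)\omega=\psi_1(\sigma)$ is immediate from the fact that $\sigma$ acts on Witt vectors by $\varphi^{\psi_1(\sigma)}$. Your route through the connecting map of $0\to\BB_{\st,E}^{\varphi=1}\to\BB_{\st,E}\xrightarrow{\varphi-1}\BB_{\st,E}\to 0$ works, but contains some loose statements: you quote the isomorphism $H^1(\BB_{\cris,E}^{\varphi=1})\cong H^1(E)$ while the relevant kernel lives in $H^1(\BB_{\st,E}^{\varphi=1})$, and the parenthetical justification that $V_{\psi_2}$ is non-crystalline via ``wild inertia'' is off target (the cleanest reason is that the Sen operator of $V_{\psi_2}$ is a nonzero nilpotent, so $V_{\psi_2}$ is not even Hodge--Tate, as the paper states). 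Also note the small circularity lurking in your abstract construction: identifying $\partial(1)$ with $[\psi_1]$ ultimately requires exhibiting the element $\omega$ anyway, so the explicit Witt-vector argument is both shorter and what secretly powers your argument. None of these issues affect the validity of the conclusion.
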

\begin{proof} We keep to use notations in the proof of Theorem
\ref{prop:middle-step}. By (\ref{eq:lambda}) and Corollary
\ref{cor:middle-step} we have
\begin{eqnarray*} (\sigma-1)(\lambda_{1,\dots, h}-\lambda_{1,\dots, h-1} ) & = &
\epsilon_{h,2}\psi_2(\sigma)-(X^{-1} U_\sigma X)_{hh} \\
&=&
-\gamma_{h,1}\psi_1(\sigma)+(\epsilon_{h,2}-\gamma_{h,2})\psi_2(\sigma)-(\sigma-1)\xi_h,
\end{eqnarray*} with the convention that $\lambda_{1,\dots, h-1}=0$
when $h=1$. Note that there exists $\omega\in
\mathrm{W}(\overline{\BF}_p)$ such that $\varphi(\omega)-\omega=1$,
where $\mathrm{W}(\overline{\BF}_p)$ is the ring of Witt vectors
with coefficients in the algebraic closure of $\BF_p$. Then
$(\sigma-1)\omega=\psi_1(\sigma)$. Hence
$$ (\epsilon_{h,2}-\gamma_{h,2})\psi_2(\sigma) = (\sigma-1)(\lambda_{1,\dots,h}-\lambda_{1,\dots, h-1}+\xi_h +\gamma_{h,1}\omega) .
$$ As the extension of $\BQ_p$ by $\BQ_p$ corresponding to $\psi_2$
is not Hodge--Tate, we have $\gamma_{h,2} = \epsilon_{h,2}$ and
$\lambda_{1,\dots,h}-\lambda_{1,\dots, h-1}+\xi_h
+\gamma_{h,1}\omega \in E$. Then \begin{equation}
\label{eq:lambda-a} (\varphi-1)(\lambda_{1,\dots,
h}-\lambda_{1,\dots, h-1}) = -(\varphi-1)\xi_h - \gamma_{h,1}
(\varphi-1)\omega = -\gamma_{h,1} .
\end{equation}

Note that $ \oplus_{I} Z e_I $, where $I$ runs over subsets of
$\{1,\dots,n\}$ with cardinal number $h$ except $\{1,\dots, h\}$, is
stable by $\varphi$. Let $Y$ denote this subspace. Then we have
$$ \varphi(g_{1,\dots, h}) =  ( 1+Z \varphi(\lambda_{1,\dots,h}) )(\prod_{i=1}^h\alpha_{i,z}) e_1\wedge\cdots \wedge e_h  \hskip 10pt (\text{mod } Y).
$$ On the other hand,
\begin{eqnarray*} \varphi(g_{1,\dots,h}) & = & (1+Z\sum_{i=1}^h \epsilon_i(p)) (
\prod_{i=1}^h \alpha_{i,z}) g_{1,\dots,h} \\
& = & (1+Z\sum_{i=1}^h \epsilon_i(p)) ( \prod_{i=1}^h \alpha_{i,z})
(1+ Z \lambda_{1,\dots,h}) e_1\wedge\cdots\wedge e_h \hskip 10pt
(\text{mod } Y).  \end{eqnarray*} Hence we obtain
\begin{equation} \label{eq:lambda-h}
(\varphi-1)\lambda_{1,\dots, h} = \sum\limits_{i=1}^h
\epsilon_i(p).\end{equation} By (\ref{eq:lambda-a}) and
(\ref{eq:lambda-h}) we have
$$ \gamma_{h,1} = -(\varphi-1) (\lambda_{1,\dots,h}-\lambda_{1,\dots,h-1}) = -\epsilon_h(p), $$ as
wanted.
\end{proof}

\section{Proof of the main theorem}\label{sec:proof-main}

Let $S$ be an affinoid algebra over $E$. Let $\CV$ be a trianguline
$S$-representation of $G_{\BQ_p}$, $\CM=\BD_\rig(\CV)$. Fix a
triangulation of $\CM$ and let $(\delta_1, \dots, \delta_n)$ be the
corresponding triangulation data.

We restate our main theorem as follows.

\begin{thm}\label{thm:main-use} Let $z$ be a closed point of $S$
such that $\CV_z$ is semistable. Let $D_z$ be the filtered
$E$-$(\varphi,N)$-module attached to $\CV_z$, and suppose that
$\varphi$ is semisimple on $D_z$. Let $\CF$ be the refinement of
$D_z$ corresponding to the triangulation of $\CM_z$. If $s$ is
strongly marked for $\CF$ and $t=t_\CF(s)$, then
$$ \frac{\mathrm{d}\delta_t(p)}{\delta_t(p)}-\frac{\mathrm{d}\delta_s(p)}{\delta_s(p)}
+ \CL_{\CF,s}(\mathrm{d}w_{\delta_t}-\mathrm{d}w_{\delta_s})
$$ is zero at $z$. Here, $\CL_{\CF,s}$ is the invariant defined in
Definition \ref{defn:Fontaine-Mazur}.
\end{thm}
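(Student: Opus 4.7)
The plan is to reduce the claim at $z$ to checking a linear identity on each tangent vector of $\mathrm{Max}(S)$ at $z$, then invoke the machinery developed in Sections \ref{sec:main-teck} and \ref{sec:aux-res}. A tangent vector corresponds to a map $S \to E_z[Z]/(Z^2)$ extending the evaluation at $z$; pulling back $\CV$ along such a map puts us exactly in the setting of Section \ref{sec:aux-res}. Writing $\delta_i = \delta_{i,z}(1+Z\epsilon_i)$, the pairings of the four differentials $\tfrac{d\delta_s(p)}{\delta_s(p)}$, $\tfrac{d\delta_t(p)}{\delta_t(p)}$, $dw_{\delta_s}$, $dw_{\delta_t}$ with this tangent vector evaluate to $\epsilon_s(p)$, $\epsilon_t(p)$, $\epsilon_{s,2}$, $\epsilon_{t,2}$, where $\epsilon'_i|_\Gamma = \epsilon_{i,2}\psi_2$. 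So it suffices to prove
$$\epsilon_t(p) - \epsilon_s(p) + \CL_{\CF,s}(\epsilon_{t,2} - \epsilon_{s,2}) = 0.$$

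Next I would run the construction of Section \ref{sec:aux-res}: fix a basis $\{v_1,\ldots,v_n\}$ of the pulled-back $\CV$, decompose the matrix of $\sigma$ as $B_\sigma = (I_n + Z U_\sigma)A_\sigma$, and form the $1$-cocycle $c_\sigma = \sum_{i,j}(U_\sigma)_{ij}\, v_{j,z}^* \otimes v_{i,z}$ with values in $\CV_z^* \otimes_{E_z}\CV_z$. Pick an $s$-perfect basis $\{e_{1,z},\ldots,e_{n,z}\}$ for $\CF$ (available by Lemma \ref{lem:s-perfect-basis}) and define the associated projections $\pi_{h,\ell}$ as in Section \ref{sec:aux-res}. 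Theorem \ref{prop:middle-step}(\ref{it:middle-step-a}) then yields the projection vanishing: $\pi_{h,\ell}([c]) = 0$ whenever $h < \ell$.

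Granted this vanishing, Corollary \ref{cor:main-use} produces constants $\gamma_{s,1},\gamma_{s,2},\gamma_{t,1},\gamma_{t,2}\in E_z$ and elements $x_s,x_t\in \BB_{\st,E_z}^{\varphi=1}$ such that
$$\pi_{h,h}(c_\sigma) = \gamma_{h,1}\psi_1(\sigma) + \gamma_{h,2}\psi_2(\sigma) + (\sigma-1)x_h \qquad (h\in\{s,t\}),$$
together with the key $\CL$-invariant identity $\gamma_{s,1} - \gamma_{t,1} = \CL_{\CF,s}(\gamma_{s,2} - \gamma_{t,2})$. Lemma \ref{lem:constant-gamma} identifies these constants explicitly as $\gamma_{h,1} = -\epsilon_h(p)$ and $\gamma_{h,2} = \epsilon_{h,2}$, and substitution gives precisely the displayed relation above, which is the claim.

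The essential technical work lies in the two preceding sections: the projection vanishing property (Theorem \ref{prop:middle-step}(\ref{it:middle-step-a})), extracted from the determinantal element $g_{1,\ldots,h}$ living in a Fontaine period via the triangulation; the implication ``projection vanishing $\Rightarrow$ $\CL$-invariant relation'' (Theorem \ref{prop:main-use}), distilled to its essence by Lemma \ref{lem:aux} and the $s$-decomposition formalism; and the explicit identification of $\gamma_{h,1},\gamma_{h,2}$ via the $\varphi$- and $\Gamma$-actions on $g_{1,\ldots,h}$ in Lemma \ref{lem:constant-gamma}. With these in hand, the proof of the main theorem is a mechanical assembly, and the only fresh step is the reduction to one tangent direction via $S\to E_z[Z]/(Z^2)$, which is immediate from the $E_z$-linearity of the four derivatives in the deformation parameter.
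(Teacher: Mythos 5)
Your proposal matches the paper's proof of Theorem~\ref{thm:main-use} in all essentials: reduce to $S=E_z[Z]/(Z^2)$, form the cocycle $c$ from the infinitesimal deformation relative to an $s$-perfect basis, invoke Theorem~\ref{prop:middle-step}(\ref{it:middle-step-a}) for projection vanishing, and combine Corollary~\ref{cor:main-use} with Lemma~\ref{lem:constant-gamma} to extract the $\CL$-invariant identity. The minor reordering you use (invoking Corollary~\ref{cor:main-use} to produce the constants before identifying them via Lemma~\ref{lem:constant-gamma}, whereas the paper first computes them via Corollary~\ref{cor:middle-step} and Lemma~\ref{lem:constant-gamma}) is harmless because $[\psi_1],[\psi_2]$ form a basis of $H^1(E)$, which injects into $H^1(\BB_{\st,E}^{\varphi=1})$, so the constants $\gamma_{h,1},\gamma_{h,2}$ are uniquely determined and the two routes agree.
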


Since we only need the first order derivation, we may assume that
$S=E[Z]/Z^2$ and $z$ corresponds to the maximal ideal $(Z)$.

For $i=1,\dots, n$ there exists a continuous additive character
$\epsilon_i$ of $\BQ_p^\times$ with values in $E$ such that
$\delta_i = \delta_{i,z} (1+Z \epsilon_i)$.  By identifying $\Gamma$
with $\BZ_p^\times$ via $\chi_{\mathrm{cyc}}$ we consider
$\epsilon_i|_{\BZ_p^\times}$ as a character of $G_{\BQ_p}$ that
factors through $\Gamma$. Then there exists $\epsilon_{i,2}\in E$
such that $\epsilon_i|_{\BZ_p^\times}= \epsilon_{i,2}\psi_2$.
Clearly $w_{\delta_i}= w_{\delta_{i,z}}+ Z \epsilon_{i,2}$. Thus
$$ \frac{\mathrm{d}\delta_i(p)}{\delta_i(p)} = \epsilon_i(p)\mathrm{d}Z, \hskip 10pt \mathrm{d}w_{\delta_i} = \epsilon_{i,2} \mathrm{d}Z . $$
Hence Theorem \ref{thm:main-use} comes from the following

\begin{prop}\label{prop:a-mu} $ \epsilon_t(p)-\epsilon_s(p) + \CL_{\CF,s} (\epsilon_{t,2} -\epsilon_{s,2})=0$.
\end{prop}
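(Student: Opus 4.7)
The plan is to combine directly the projection vanishing property established in Section \ref{sec:aux-res} with the ``projection vanishing implies $\CL$-invariants'' property of Section \ref{sec:main-teck}, using the explicit identification of the relevant constants from Lemma \ref{lem:constant-gamma}. Essentially all the substantive work is already done; what remains is an assembly.

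First I would observe that, because $s$ is strongly critical for $\CF$, Lemma \ref{lem:s-perfect-basis} allows us to choose the perfect basis $\{e_{1,z}, \ldots, e_{n,z}\}$ of $D_z$ in the construction of Section \ref{sec:aux-res} to be $s$-perfect (rather than merely perfect). This is the one compatibility point that must be checked, and it is immediate. With this choice fixed, the matrices $X$ and $U_\sigma$ of Section \ref{sec:aux-res}, and the associated $1$-cocycle $c:G_{\BQ_p}\to \CV_z^*\otimes_E\CV_z$ attached to the infinitesimal deformation $\CV$, are all defined relative to an $s$-perfect basis.

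Next I would invoke Theorem \ref{prop:middle-step}(\ref{it:middle-step-a}), which says that $\pi_{h,\ell}([c])=0$ whenever $h<\ell$. Thus $c$ satisfies the hypothesis of Corollary \ref{cor:main-use} for the pair $s<t$. Applying Corollary \ref{cor:main-use} with the distinguished pair of indices $(s,t)$ produces $\gamma_{s,1},\gamma_{s,2},\gamma_{t,1},\gamma_{t,2}\in E$ satisfying
$$\gamma_{s,1}-\gamma_{t,1}=\CL_{\CF,s}(\gamma_{s,2}-\gamma_{t,2}).$$
(Note that the $\gamma_{h,1},\gamma_{h,2}$ appearing here coincide with those of Corollary \ref{cor:middle-step} applied to the same $c$, since both come from the decomposition $\pi_{h,h}(c_\sigma)=\gamma_{h,1}\psi_1(\sigma)+\gamma_{h,2}\psi_2(\sigma)+(\sigma-1)\xi_h$ which is unique up to the choice of $\xi_h\in\BB_{\st,E}^{\varphi=1}$; the scalars $\gamma_{h,1},\gamma_{h,2}$ themselves are determined because $\psi_1,\psi_2$ form a basis of $H^1(E)$.)

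Finally, Lemma \ref{lem:constant-gamma} evaluates these constants explicitly as $\gamma_{h,1}=-\epsilon_h(p)$ and $\gamma_{h,2}=\epsilon_{h,2}$ for $h=s,t$. Substituting into the identity above yields
$$-\epsilon_s(p)+\epsilon_t(p)=\CL_{\CF,s}(\epsilon_{s,2}-\epsilon_{t,2}),$$
which rearranges to $\epsilon_t(p)-\epsilon_s(p)+\CL_{\CF,s}(\epsilon_{t,2}-\epsilon_{s,2})=0$, as claimed. There is no real obstacle in this final step; the deep inputs were the projection vanishing of Theorem \ref{prop:middle-step}, the $\CL$-invariant extraction in Theorem \ref{prop:main-use} (via Lemma \ref{lem:aux} and Proposition \ref{prop:same-kernel}), and the explicit computation of $\gamma_{h,1},\gamma_{h,2}$ from the $\varphi$-eigenvalue on the top exterior power of $\Fil_h\CM$.
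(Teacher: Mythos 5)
Your proof is correct and follows essentially the same route as the paper: fix an $s$-perfect basis, invoke Theorem \ref{prop:middle-step}(\ref{it:middle-step-a}) for projection vanishing, apply Corollary \ref{cor:main-use} to extract the $\CL$-invariant relation, and substitute the explicit values from Lemma \ref{lem:constant-gamma}. Your parenthetical remark justifying why the $\gamma_{h,j}$ of Corollary \ref{cor:main-use} agree with those of Corollary \ref{cor:middle-step} (uniqueness modulo coboundaries from $\BB_{\st,E}^{\varphi=1}$, i.e.\ injectivity of $H^1(E)\to H^1(\BB_{\st,E}^{\varphi=1})$) is a reasonable clarification that the paper leaves implicit.
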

\begin{proof} Let $c$ be the $1$-cocycle attached to the
infinitesimal deformation $\CV$ of $\CV_z$. Fix an $s$-perfect basis
for $\CF$,  
and let $\pi_{h \ell}$ ($h,\ell \in \{ 1 , \dots , n \}$) be the
maps defined by (\ref{eq:proj}) using this basis. By Corollary
\ref{cor:middle-step} and Lemma \ref{lem:constant-gamma} there are
$\xi_s,\xi_t\in \BB_{\st,E}^{\varphi=1}$ such
that$$\pi_{s,s}(c_\sigma)=-\epsilon_{s}(p)\psi_1(\sigma)+
\epsilon_{s,2} \psi_2(\sigma)+(\sigma-1)\xi_s$$ and
$$\pi_{t,t}(c_\sigma)=-\epsilon_{t}(p) \psi_1(\sigma)+\epsilon_{t,2}
\psi_2(\sigma)+(\sigma-1)\xi_t .$$  By Theorem
\ref{prop:middle-step} (\ref{it:middle-step-a}) we have
$\pi_{h\ell}([c])=0$ when $h<\ell$. Thus it follows from Corollary
\ref{cor:main-use} that $ \epsilon_{t}(p)-\epsilon_{s}(p) =
\CL_{\CF,s} ( \epsilon_{s,2}-\epsilon_{t,2} ) $.
\end{proof}

\section{Relation with the $p$-adic Langlands
program}\label{sec:apply}

We discuss a possible application of Theorem \ref{thm:main} in
$p$-adic Langlands program.

We work on the simple Shimura varieties used in \cite{HT} and
\cite{TY2007}. Let $F$ be an imaginary quadratic field, $B$ a
division algebra with the center $F$ and $\dim_F B=n^2$. Suppose
that $F$ and $B$ satisfy the conditions in \cite[\S I.7]{HT}. Pick a
positive involution $*$ on $B$ such that $*|_F$ is the complex
conjugacy. For some fixed nonzero $\beta\in B^{*=-1}$ we define an
involution on $B$ by $x^\#= \beta x^* \beta^{-1}$, and a reductive
group $\mathbf{G}/\BQ$ such that for any $\BQ$-algebra $R$,
$\mathbf{G}(R)=\{(\lambda, g)\in R^\times \times
(B^{\mathrm{op}}\otimes_\BQ R)^\times: gg^\#=\lambda\}$. Write
$A_f=\widehat{\BZ}\otimes_\BZ\BQ$.  For each (sufficiently small)
compact open subgroup $K$ of $\mathbf{G}(A_f)$, let $\Sh_K$ be the
variety representing the moduli functor given in \cite[\S III.1]{HT}
or \cite[\S 2]{TY2007}. Then $\Sh_K$ is an $(n-1)$-dimensional
projective variety over $F$. The projective system $\{\Sh_K\}_K$ for
varying $K$ has a natural action of $\mathbf{G}(A_f)$.

Let $p$ be a prime number that splits in $F$ and $B$. Then
$\mathbf{G}(\BQ_p)$ is isomorphic to
$\BQ_p^\times\times\mathrm{GL}_n(\BQ_p)$, and for each prime
$\mathfrak{p}$ of $F$ above $p$, $F_\mathfrak{p}$ is isomorphic to
$\BQ_p$.

Let $E$ be a finite extension of $\BQ_p$, $\mathcal{O}_E$ the ring
of integers in $E$, $\varpi_E$ a uniformizer of $\mathcal{O}_E$. Let
$\rho$ be a self-dual $n$-dimensional continuous representation of
$G_F$ over $E$ appeared in the \'etale cohomology
$H^{n-1}_{\mathrm{et}}$ of local systems on $\Sh_K$ for $K$
sufficiently small \cite{HT, TY2007}. Put
$$ \widetilde{H}^{n-1}(K^p, E) : = \lim_{\overleftarrow{\;\;\:r\;\;}} \lim_{\overrightarrow{\;K'_p\;}}
H^{n-1}_{\mathrm{et}} ((\Sh_{K^pK'_p})_{\overline{\BQ}},
\mathcal{O}_E/\varpi_E^{r}\mathcal{O}_E) \otimes_{\mathcal{O}_E} E
$$ where $K^p$ denotes the component of $K$ outside $p$, and $K'_p$
runs over compact open subgroups of $\mathbf{G}(A_f)$. This is an
$E$-Banach space equipped with a continuous action of $\BQ_p^\times
\times\mathrm{GL}_n(\BQ_p)\times G_{F} \times \mathcal{H}^p$, where
$\mathcal{H}^p$ denotes the $E$-algebra of Hecke operators outside
$p$. Fix an $E$-valued smooth character $\psi_0$ of $\BQ_p^\times$.
Put
$$ \widetilde{\Pi}(\rho)^{\psi_0} := \mathrm{Hom}_{G_F} (\rho, \widetilde{H}^{n-1}(K^p,
E)^{\BQ_p^\times=\psi_0}).
$$ The representation of $\mathrm{GL}_n(\BQ_p)$, $\widetilde{\Pi}(\rho)^{\psi_0}$, is supposed to be the direct sum of several copies of
the admissible Banach representation
$\widehat{\Pi}(\mathrm{rec}(\psi_0)\otimes\rho)$ of
$\mathrm{GL}_n(\BQ_p)$ corresponding to $\mathrm{rec}(\psi_0)
\otimes\rho_\mathfrak{p}$ in the $p$-adic Langlands correspondence
(with certain normalization), where $\mathrm{rec}(\psi_0)$ is the
character of $G_{\BQ_p}$ attached to $\psi_0$ via the reciprocity
law \footnote{normalized so that geometric Frobenius elements
correspond to uniformizers} and
$\rho_\mathfrak{p}:=\rho|_{G_{F_\mathfrak{p}}}$. Under certain
conditions on $\rho$, the multiplicity of
$\widehat{\Pi}(\mathrm{rec}(\psi_0)\otimes\rho)$ in
$\widetilde{\Pi}(\rho)^{\psi_0}$ may be computed by Matsushima's
formula \cite[VII.5.2]{BoWa}.

Our knowledge on $\widetilde{\Pi}(\rho)^{\psi_0}$ and
$\widehat{\Pi}(\mathrm{rec}(\psi_0)\otimes\rho)$ is quite little.
However, using our formula in Theorem \ref{thm:main} and following
Ding's method \cite{Ding} one may give some descriptions on
$\widetilde{\Pi}(\rho)^{\psi_0,\mathrm{la}}$, the space of locally
analytic vectors in $\widetilde{\Pi}(\rho)^{\psi_0}$.

Now we focus on the case of $n=3$. Suppose that $\rho_\mathfrak{p}$
is semistable and the filtered $(\varphi,N)$-module
$\mathbf{D}_\st(\rho_\mathfrak{p})$ attached to
$\rho_{\mathfrak{p}}$ satisfies the following conditions:
\begin{eqnarray*}&& \mathbf{D}_\st(\rho_\mathfrak{p}) = Ee_1\oplus Ee_2\oplus Ee_3, \\ &&
\varphi(e_1)=\alpha_1e_1,\ \varphi(e_2)=\alpha_2 e_2, \
\varphi(e_3)= \alpha_3 e_3
\\ && Ne_1=Ne_2=0,\  Ne_3=e_1.  \end{eqnarray*} Note that
$\alpha_3=p\alpha_1$. We assume that $\alpha_2\neq p^{-1}\alpha_1,
\alpha_1,p\alpha_1, p^2\alpha_1$. There are three refinements
$\CF^{(1)}, \CF^{(2)}, \CF^{(3)}$ on
$\mathbf{D}_\st(\rho_\mathfrak{p})$:
\begin{eqnarray*}
&& \CF^{(1)}_1 = Ee_1, \ \CF^{(1)}_2 = Ee_1\oplus E e_2, \
\CF^{(1)}_3= \mathbf{D}_\st(\rho_\mathfrak{p}) ; \\
&& \CF^{(2)}_1 = Ee_2, \ \CF^{(2)}_2 = Ee_1\oplus E e_2, \
\CF^{(2)}_3= \mathbf{D}_\st(\rho_\mathfrak{p}) ; \\
&& \CF^{(3)}_1 = Ee_1, \ \CF^{(3)}_2 = Ee_1\oplus E e_3, \
\CF^{(3)}_3= \mathbf{D}_\st(\rho_\mathfrak{p}) .
\end{eqnarray*}
An interesting case is that $1$ is strongly marked for $\CF^{(1)}$.
Let $k_1$, $k_2$, $k_3$ be the ordered Hodge--Tate weights for
$\CF^{(1)}$, and suppose that $k_1<k_2<k_3$.

For each $\alpha\in E$ let $\mathrm{unr}(\alpha)$ denote the
character of $\BQ_p^\times$ whose value at $p$ is $\alpha$ and whose
restriction to $\BZ_p^\times$ is trivial. Put
$\delta_i=\mathrm{unr}(\alpha_i)x^{-k_i}$ ($i=1,2,3$). Let
$B(\BQ_p)$ be the Borel subgroup of $\GL_3(\BQ_p)$ consisting of
upper-triangular matrices. Let $\delta_B=|\cdot|^{2}\otimes 1\otimes
|\cdot|^{-2}$ be the modulus character of $B(\BQ_p)$. We consider
the locally analytic induced representation
\begin{eqnarray*}&&\hskip -20pt \Big(\mathrm{Ind}_{B(\BQ_p)}^{\GL_3(\BQ_p)}\delta_3\otimes
\delta_2 x^{-1}\otimes \delta_1 x^{-2}\Big)^{\mathrm{la}} : = \Big\{
f: \mathrm{GL}_3(\BQ_p)
\rightarrow E \ |\ f \text{ is locally analytic on }G \\
&& \hskip 15pt \text{ and }  f(bg) =\delta_B^{1/2}(b)
(\delta_3\otimes \delta_2 x^{-1}\otimes \delta_1 x^{-2})(b) f(g) \ \
\forall \   b\in B(\BQ_p) ,\ g\in \mathrm{GL}_3(\BQ_p)
\Big\}.\end{eqnarray*}  Let $\mathrm{Alg}_{k_1, k_2, k_3}$ denote
the irreducible algebraic representation of $\mathrm{GL}_3(\BQ_p)$
with the lowest weight $x^{-k_3}\otimes x^{-1-k_2}\otimes
x^{-2-k_1}$. Then there is an inclusion \begin{eqnarray*}  &&
\mathrm{Alg}_{k_1, k_2, k_3} \otimes
\Big(\mathrm{Ind}_{B(\BQ_p)}^{\GL_3(\BQ_p)}\mathrm{unr}(\alpha_3)\otimes
\mathrm{unr}(\alpha_2)\otimes
\mathrm{unr}(\alpha_1)\Big)^{\mathrm{sm}} \\
&& \hskip 100pt \hookrightarrow
\Big(\mathrm{Ind}_{B(\BQ_p)}^{\GL_3(\BQ_p)}\delta_3\otimes
\delta_2x^{-1}\otimes \delta_1x^{-2}\Big)^{\mathrm{la}},
\end{eqnarray*} where
$\Big(\mathrm{Ind}_{B(\BQ_p)}^{\GL_3(\BQ_p)}\mathrm{unr}(\alpha_3)\otimes
\mathrm{unr}(\alpha_2)\otimes
\mathrm{unr}(\alpha_1)\Big)^{\mathrm{sm}}$ is the smooth induced
representation. By the theory of Bernstein and Zelevinski \cite{BZ}
the condition $\alpha_2 \notin \{ p^{-1}\alpha_1,
\alpha_1,p\alpha_1, p^2\alpha_1 \}$ ensures that
$\Big(\mathrm{Ind}_{B(\BQ_p)}^{\GL_3(\BQ_p)}\mathrm{unr}(\alpha_3)\otimes
\mathrm{unr}(\alpha_2)\otimes
\mathrm{unr}(\alpha_1)\Big)^{\mathrm{sm}}$ has a unique irreducible
subrepresentation denoted by $S$, and its quotient by $S$ is also
irreducible.

Using Ding's method \cite[\S 4]{Ding} one may show that twisted by a
character $\widetilde{\Pi}(\rho)^{\psi_0, \mathrm{la}}$ contains an
element in
$$\mathrm{Ext}_{\mathrm{GL}_3(\BQ_p)}\Big(\mathrm{Alg}_{k_1, k_2,
k_3}\otimes S, \big(
\mathrm{Ind}_{B(\BQ_p)}^{\GL_3(\BQ_p)}\delta_3\otimes \delta_2
x^{-1}\otimes \delta_1 x^{-2} \big)^{\mathrm{la}}/
\mathrm{Alg}_{k_1, k_2, k_3}\otimes S \Big) $$ and this element only
depends on $\CL_{\CF^{(1)},1}(\mathbf{D}_\st(\rho_\mathfrak{p}))$.
For this one needs to work on some eigenvariety and use Kedlaya,
Pottharst and Xiao's result \cite[Corollary 6.3.10]{KPX} to
construct a family of Galois representations with triangulations on
the eigenvariety. The pair $(\rho, \CF^{(1)})$ corresponds to a
point on the eigenvariety. Our formula in Theorem \ref{thm:main}
should imply that the tangent space of the eigenvariety at this
point satisfies a relation. Such a relation plays an important role
in Ding's method (see the proof of \cite[Lemma 4.13]{Ding}).

There should be a similar result when either $2$ is strongly marked
for $\CF^{(2)}$ or $1$ is strongly marked for $\CF^{(3)}$.

\begin{rem} In the case when the monodromy $N$ is of rank $2$,
Breuil \cite[Th\'eor\`eme 1.2]{Breuil} gave a description on the
(conjectural) locally analytic representation attached to
$\rho_{\mathfrak{p}}$.
\end{rem}


\begin{thebibliography}{99}

\bibitem{BC} J. Bella\"iche, G. Chenevier, {\it Families of Galois representations and Selmer groups.} Ast\'erisque 324 (2009), 314
page.


\bibitem{Ben2010} D. Benois, {Infinitesimal deformations and the
$\ell$-invariant}. Documenta Math. Extra Volume: Andrei A. Suslin's
Sixtieth Birthday (2010), 5-31.

\bibitem{Ben} D. Benois, {\it A generalization of Greenberg's
$\mathscr{L}$-invariant.} Amer. J. Math. 133 (2011), 1573-1632.

\bibitem{Be} L. Berger, {\it Repr\'esentations $p$-adiques et \'equations
diff\'erentielles}. Invent. Math. 148 (2002), 219-284.

\bibitem{Be2011} L. Berger, {\it  Trianguline representations.} Bull. Lond. Math.
Soc. 43 (2011),  no. 4, 619-635.

\bibitem{Berger-com} L. Berger, {\it \'Equations diff\'erentielles $p$-adiques et $(\varphi,N)$-modules
filtr\'es}. Ast\'erisque 319 (2008), 13-38.

\bibitem{BeCo} L. Berger, P. Colmez, {\it Familles des repr\'entations de de Rham et monodromie
p-adique},  Ast\'erisque 319 (2008), 303-337.

\bibitem{BZ} I. N. Bernstein, A. V. Zelevinski, {\it Induced representations of reductive $\mathfrak{p}$-adic groups.
I.} Ann. Sci. \'Ecole Norm. Sup. (4) 10 (1977), 441-472.

\bibitem{BoWa} A. Borel, N. Wallach, {\it Continuous cohomology, discrete subgroups, and representations of reductive
groups}. Ann. of Math. Studies 94. Princeton University Press, 1980.

\bibitem{Breuil} C. Breuil, {\it $\mathrm{Ext}^1$ localement analytique et compatibilit\'e
local-global}. Prepublication 2016.

\bibitem{BM} C. Breuil, A. M\'ezard, {\it Multiplicit\'es modulaires et repr\'esentations de $\mathrm{GL}_2(\BZ_p)$
et de $\mathrm{Gal}(\overline{\BQ_p}/\BQ_p)$ en $l=p$.} Duke Math.
J. 115 (2002), 205-310.

\bibitem{Chen2004} G. Chenevier, {\it Families $p$-adiques de froms automorphes pour
$\GL_n$}. J. reine angew. Math. 570 (2004), 143-217.

\bibitem{Chen} G. Chenevier, {\it Sur la densit\'e des repr\'esentations cristallines de
$\mathrm{Gal}(\overline{\BQ}_p/\BQ_p)$}. Math. Ann. 355 (2013) no.
4, 1469-1525.

\bibitem{CC} F. Cherbonnier, P. Colmez, {\it Repr\'esentations $p$-adiques
surconvergentes}. Invent. Math. 133 (1998), no.3, 581-611.

\bibitem{Cz2005} P. Colmez, {\it Z\'eros suppl\'ementaires de fonctions $L$ $p$-adiques de formes
modulaires}. Algebra and Number Theory,  Hindustan Book Agency,
Delhi (2005), 193-210.

\bibitem{Cz2008} P. Colmez, {\it Repr\'esentations triangulines de dimension
$2$.} Asterisque 319 (2008), 213-258.

\bibitem{Col-ast} P. Colmez, {\it Repr\'esentations de $\mathrm{GL}_2(\mathbf{Q}_p)$ and
$(\varphi,\Gamma)$-modules}. Ast\'erisque 330 (2010),
281-509.

\bibitem{Cz2010} P. Colmez, {\it Invariants $\mathcal{L}$ et d\'eriv\'ees de valeurs propres de
Frobenius}. Ast\'erisque 331 (2010), 13-28.

\bibitem{Col2014} P. Colmez, {\it La s\'erie principale
unitaire de $\GL_2(\mathbb{Q}_p)$: Vecteurs localement analytiques,}
in ``Automorphic forms and Galois representations Vol 1'', London
Math. Soc. Lect. Note Series 415 (2014), 286-358.

\bibitem{CzFon} P. Colmez, J.-M. Fontaine, {\it Construction des repr\'esentations $p$-adiques
semi-stables}. Invent. Math. 140 (2000), 1-43.

\bibitem{Ding} Y. Ding, {\it $\mathcal{L}$-invariants and local-global compatibility for the group
$\mathrm{GL}_{2}/F$}, preprint, 2015.

\bibitem{Ding-2} Y. Ding, {\it $\mathcal{L}$-invariants, partially de Rham families and local-global
compatibility}, preprint, 2015.

\bibitem{Emer} M. Emerton, {\it On the interpolation of systems of eigenvalues attached to automorphic Hecke
eigenforms}. Invent. Math. 164 (2006), 1-84.

\bibitem{Fontaine1990} J.-M. Fontaine, {\it Repr\'esentations $p$-adiques des corps locaux.
I.} The Grothendieck Festschrift, vol. II, Prog. Mth., vol 87,
Birkh\"auser Boston, MA, 1990, 249-309.

\bibitem{Fontaine} J.-M. Fontaine, {\it Le corps des p\'eriodes $p$-adiques.} Ast\'erisque, No. 223 (1994), 59-111.

\bibitem{Fontaine-b} J.-M. Fontaine, {\it Repr\'esentaions $p$-adiques semi-stables.} Ast\'erisque, No. 223 (1994),
113-184.

\bibitem{GS} R. Greenberg, G. Stevens, {\it $p$-adic L-functions and $p$-adic periods of modular
forms}. Invent. Math. 111 (1993), 407-447.

\bibitem{HT} M. Harris, R. Taylor, {\it The geometry and cohomology of some simple Shimura
varieties}. Ann. Math. Studies, vol. 151. Princeton University
Press, Princeton NJ, 2001.

\bibitem{Ked05} K. Kedlaya, {\it Slope filtrations revisited}. Doc.
Math. 10 (2005), 447-525.

\bibitem{KL} K. Kedlaya, R. Liu, {\it On families of
$(\varphi,\Gamma)$-modules}. Algebra Number Theory 4 (2010) no 7,
943-967.

\bibitem{KPX} K. Kedlaya, J. Pottharst, L. Xiao, {\it Cohomology of arithmetic families of
$(\varphi,\Gamma)$-modules}. J. Amer. Math. Soc, 27 (2014) no. 4,
1043-1115.

\bibitem{Kisin} M. Kisin, {\it Overconvergent modular forms and the Fontaine--Mazur
conjecture}. Invent. Math. 153, 373-454.

\bibitem{LXZ} R. Liu, B. Xie, Y. Zhang, {\it Locally analytic vectors of unitary principal series of
$\mathrm{GL}_2(\mathbb{Q}_p)$.} Ann. Sci. \'Ec. Norm. Sup\'er. (4)
45 (2012), 167-190.

\bibitem{Mazur} B. Mazur, {\it The theme of $p$-adic variation},
Mathematics: frontiers and perspectives, Amer. Math. Soc.,
Providence 2000, 433-459.

\bibitem{MTT} B. Mazur, J. Tate, J. Teitelbaum, {\it On $p$-adic analogs of the conjectures of Birch and
Swinnerton-Dyer}. Invent. Math. 84 (1986), 1-48.

\bibitem{Pott} J. Pottharst, {\it The $\mathcal{L}$-invariant, the dual $\mathcal{L}$-invariant, and
families}, preprint 2015.

\bibitem{TY2007} R. Taylor, T. Yoshida, {\it Compatibility of local and global Langlands
correspondences}. J. Amer. Math. Soc. 20 (2007), 467-493.

\bibitem{Xie2012} B. Xie, {\it On families of filtered
$(\varphi,N)$-modules}. Math. Res. Lett. 19 (2012), 667-689.

\bibitem{Zhang} Y. Zhang, {\it $\CL$-invariants and logarithmic derivatives of eigenvalues of
Frobenius.} Science China Mathematics 57 (2014), 1587-1604.

\end{thebibliography}
\end{document}